\numberwithin{equation}{section}
\numberwithin{figure}{section}
\theoremstyle{plain}
\newtheorem{thm}{\protect\theoremname}
\theoremstyle{definition}
\newtheorem{defn}[thm]{\protect\definitionname}
\theoremstyle{remark}
\newtheorem{rem}[thm]{\protect\remarkname}
\theoremstyle{remark}
\newtheorem{notation}[thm]{\protect\notationname}
\theoremstyle{plain}
\newtheorem{prop}[thm]{\protect\propositionname}
\theoremstyle{definition}
\newtheorem{example}[thm]{\protect\examplename}
\theoremstyle{plain}
\newtheorem{lem}[thm]{\protect\lemmaname}
\theoremstyle{plain}
\newtheorem{cor}[thm]{\protect\corollaryname}
\date{}
\setlist[itemize]{noitemsep,topsep=5pt}
\titleformat{\section}{\large\bfseries\filleft}{\thesection}{1em}{}[{\titlerule[0.8pt]}]
\newcommand{\btimes}{\mathbin{\rotatebox[origin=c]{36}{$\pentagram$}}}
\newcommand\bleh{%
  \mathrel{\ooalign{\hss$\btimes$\hss\cr%
  \kern0.025ex\raise-0.88ex\hbox{\scalebox{2.5}
  {$\circ$}}}}}
\renewcommand\labelenumi{(\roman{enumi})}
\renewcommand\theenumi\labelenumi
\DeclareMathOperator{\Spec}{Spec}
\DeclareMathOperator{\Fr}{Fr}
\DeclareMathOperator{\Tr}{Tr}
\DeclareMathOperator{\Id}{Id}
\DeclareMathOperator{\ev}{ev}
\DeclareMathOperator{\Conf}{Conf}
\DeclareMathOperator{\fudge}{fudge}
\DeclareMathOperator{\Res}{Res}
\DeclareMathOperator{\Nm}{Nm}
\DeclareMathOperator{\Aff}{Aff}
\DeclareMathOperator{\PConf}{PConf}
\DeclareMathOperator{\stab}{stab}
\DeclareMathOperator{\res}{res}
\let\oldtheorem\thm
\renewcommand{\thm}{\oldtheorem\normalfont}
\let\oldprop\prop
\renewcommand{\prop}{\oldprop\normalfont}
\let\oldcor\cor
\renewcommand{\cor}{\oldcor\normalfont}
\let\oldlem\lem
\renewcommand{\lem}{\oldlem\normalfont}
\newenvironment{ack}{\textit{Acknowledgements.}}{}
\providecommand{\corollaryname}{Corollary}
\providecommand{\definitionname}{Definition}
\providecommand{\examplename}{Example}
\providecommand{\lemmaname}{Lemma}
\providecommand{\notationname}{Notation}
\providecommand{\propositionname}{Proposition}
\providecommand{\remarkname}{Remark}
\providecommand{\theoremname}{Theorem}
\begin{document}
\global\long\def\A{\mathbb{A}}%

\global\long\def\C{\mathbb{C}}%

\global\long\def\E{\mathbb{E}}%

\global\long\def\F{\mathbb{F}}%

\global\long\def\G{\mathbb{G}}%

\global\long\def\H{\mathbb{H}}%

\global\long\def\N{\mathbb{N}}%

\global\long\def\P{\mathbb{P}}%

\global\long\def\Q{\mathbb{Q}}%

\global\long\def\R{\mathbb{R}}%

\global\long\def\O{\mathcal{O}}%

\global\long\def\Z{\mathbb{Z}}%

\global\long\def\ep{\varepsilon}%

\global\long\def\wangle#1{\left\langle #1\right\rangle }%

\global\long\def\ol#1{\overline{#1}}%

\global\long\def\mf#1{\mathfrak{#1}}%

\global\long\def\mc#1{\mathcal{#1}}%

\global\long\def\norm#1{\left\Vert #1\right\Vert }%

\global\long\def\et{\textup{ét}}%

\global\long\def\Et{\textup{Ét}}%

\title{A geometric approach to functional equations for general multiple
Dirichlet series over function fields}
\author{Matthew Hase-Liu\thanks{Department of Mathematics, Columbia University, New York, NY}\thanks{Email address: m.hase-liu@columbia.edu}}
\maketitle
\begin{abstract}
Sawin recently gave an axiomatic characterization of multiple Dirichlet
series over the function field $\F_{q}(T)$ and proved their existence
by exhibiting the coefficients as trace functions of specific perverse
sheaves. However, he did not prove that these series actually converge
anywhere, instead treating them as formal power series. 

In this paper, we prove that these series do converge in a certain
region, and moreover that the functions obtained by analytically continuing
them satisfy functional equations. 

For convergence, it suffices to obtain bounds on the coefficients,
for which we use the decomposition theorem for perverse sheaves, in
combination with the Kontsevich moduli space of stable maps to construct
a suitable compactification. 

For the functional equations, the key identity is a multi-variable
generalization of the relationship between a Dirichlet character and
its Fourier transform; in the multiple Dirichlet series setting, this
uses a density trick for simple perverse sheaves and an explicit formula
for intermediate extensions from the complement of a normal crossings
divisor. 
\end{abstract}
\tableofcontents{}

\section{Introduction}

A multiple Dirichlet series, roughly speaking, is a multi-variable
generalization of the well-studied single-variable Dirichlet series.
A usual (single-variable) Dirichlet series is a  series in a single
complex variable whose coefficients satisfy multiplicativity relations,
whereas a multiple Dirichlet series is a series in several complex
variables whose coefficients satisfy certain \textit{twisted} multiplicativity
relations instead.

The traditional perspective is to require the multiple Dirichlet series
to additionally satisfy a group of functional equations. Then, to
construct a specific multiple Dirichlet series, twisted multiplicativity
allows one to reduce to a local construction of coefficients of prime
powers, with the corresponding local generating functions satisfying
similar functional equations. Over the function field $\F_{q}(T)$,
\cite{chinta_mult} proved there is a local-to-global relationship
between these local generating functions and the global multiple Dirichlet
series. \cite{diaconu_pasol_moduli} moreover showed in the specific
setting of quadratic Dirichlet \textit{L}-series that this local-to-global
relationship actually uniquely characterizes the multiple Dirichlet
series. In particular, they gave an axiomatic characterization of
these specific multiple Dirichlet series (one of the axioms being
this local-to-global relationship), and in this setting, \cite{whitehead_multiple}
in his thesis was able to establish the functional equations using
this local-to-global relationship. 

Recently, \cite{sawin_general} massively generalized these ideas
by giving an axiomatic characterization of general multiple Dirichlet
series over $\F_{q}(T)$, and he proved the existence of such multiple
Dirichlet series by expressing the coefficients as trace functions
of a perverse sheaf. He also showed that these general multiple Dirichlet
series generalize many known examples of multiple Dirichlet series
that had appeared in the literature. 

Sawin, however, left open whether or not these series are genuine
analytic functions, as well as which functional equations are satisfied.
In this paper, we use geometric methods to answer these questions.

Fix $q$ to be a power of an odd prime. Let $\F_{q}[T]^{+}$ be the
set of monic single-variable polynomials over $\F_{q}$, and let $\mathcal{M}_{d}$
be the subset of degree $d$ monic polynomials. Fix a positive integer
$n$, and let $\chi\colon\F_{q}^{\times}\to\C^{\times}$ be a non-trivial
multiplicative character of order $n$. Let $m$ be a positive integer
and $M$ be a symmetric $m\times m$ matrix with integer entries (modulo
$n$).

Sawin constructs a multiple Dirichlet series
\[
L\left(u_{1},\ldots,u_{m};M\right)=\sum_{f_{1},\ldots,f_{m}\in\F_{q}[t]^{+}}a\left(f_{1},\ldots,f_{m};M\right)u_{1}^{\deg f_{1}}\cdots u_{m}^{\deg f_{m}},
\]
where each ``$a$-coefficient'' $a\left(f_{1},\ldots,f_{m};M\right)$
arises as the trace of Frobenius acting on the stalk (at the tuple
$\left(f_{1},\ldots,f_{m}\right)$ viewed as an element of the moduli
space of tuples of monic polynomials $\prod_{i=1}^{m}\A_{\F_{q}}^{d_{i}}$)
of a certain perverse sheaf defined by the parameters $M$ and the
degrees of the polynomials $f_{1},\ldots,f_{m}$. 

Our first main result is as follows:
\begin{thm}
\label{thm:firstmain}Let $n,m,$ and $s$ be positive integers, and
$M$ a symmetric $m\times m$ matrix with coefficients in $\Z/n\Z$.
Then, $L\left(u_{1},\ldots,u_{m};M\right)$ is an analytic function
with a non-empty region of convergence.
\end{thm}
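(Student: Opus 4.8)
The plan is to reduce the claim to a uniform polynomial bound on the $a$-coefficients $a(f_1,\ldots,f_m;M)$ in terms of the degrees $d_i = \deg f_i$. Indeed, since $a(f_1,\ldots,f_m;M)$ is by construction the trace of Frobenius on the stalk of a perverse sheaf $\mathcal{K}_{d_1,\ldots,d_m}$ on $\prod_{i=1}^m \A_{\F_q}^{d_i}$, Deligne's bounds (the Weil conjectures / Riemann hypothesis over finite fields) give $|a(f_1,\ldots,f_m;M)| \leq C \cdot q^{w(d_1,\ldots,d_m)/2}$, where $C$ bounds the total dimension of the stalk cohomology and $w$ bounds the weights appearing. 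Once we know $C$ grows at most polynomially in the $d_i$ and $w$ grows at most linearly, the multiple power series $L(u_1,\ldots,u_m;M) = \sum a(f_1,\ldots,f_m;M) u_1^{d_1}\cdots u_m^{d_m}$ will converge absolutely whenever each $|u_i|$ is small enough (roughly $|u_i| < q^{-1/2 - \epsilon}$ after accounting for the $q^{d_i}$ monic polynomials of degree $d_i$), giving a non-empty polydisc of convergence on which $L$ is analytic.

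So the real content is controlling the dimensions of the stalks and the weights of $\mathcal{K}_{d_1,\ldots,d_m}$. For this I would invoke the decomposition theorem: the sheaf is built (in Sawin's construction) as a direct image or a suitable functorial combination starting from rank-one Kummer sheaves along the hypersurfaces $\{f_i \text{ and } f_j \text{ share a root}\}$, and after passing to a proper morphism it decomposes as a direct sum of shifted simple perverse sheaves, each pure of a controlled weight. To make the morphism proper — the key technical device flagged in the abstract — I would introduce a compactification of the relevant moduli space using the Kontsevich moduli space $\overline{\mathcal{M}}_{0,n}(\cdots)$ of stable maps, so that the resolution/compactification has dimension growing linearly in $\sum d_i$ and bounded Betti numbers in each fiber. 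The decomposition theorem then bounds the stalk dimension by a sum of intersection cohomology dimensions of the pieces, which one controls by the geometry of the compactification.

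Concretely, the steps in order: (1) recall Sawin's explicit description of $\mathcal{K}_{d_1,\ldots,d_m}$ and identify a proper map $\pi$ from a smooth compactified space $\widetilde{X}$ whose generic fiber computes the coefficients; (2) apply the decomposition theorem to $R\pi_* \overline{\mathbb{Q}}_\ell$ (or to the pushforward of the relevant Kummer-type local system) to write it as $\bigoplus_j \mathrm{IC}_{Z_j}(L_j)[\text{shift}]$ with each summand pure; (3) bound the number of summands and the ranks of the $L_j$ by the combinatorial/geometric complexity of $\widetilde{X}$ — here the stable-maps compactification gives dimension $O(\sum d_i)$ and hence, via known bounds on Betti numbers of such moduli spaces, a polynomial-in-$d_i$ bound on total stalk dimension; (4) read off that the weights are $\leq \sum d_i + O(1)$, so $|a(f_1,\ldots,f_m;M)| = O_{\epsilon}(q^{(1/2+\epsilon)\sum d_i})$ with polynomial fudge factors; (5) sum the geometric series: $\sum_{d_i \geq 0} (\#\mathcal{M}_{d_i}) \cdot q^{(1/2+\epsilon)\sum d_i} |u_i|^{d_i} = \sum q^{(3/2+\epsilon)\sum d_i}|u_i|^{d_i}$ converges for $|u_i| < q^{-3/2-\epsilon}$, establishing a non-empty region of convergence.

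The main obstacle, I expect, is step (1)–(3): producing a compactification that is simultaneously (a) proper over the base, (b) compatible with the Kummer/intersection-complex structure defining the $a$-coefficients, and (c) of the right (linear) dimension with controlled cohomology, so that the decomposition theorem yields a genuinely \emph{uniform} bound — uniform in the polynomials $f_i$ of given degree, and with explicit polynomial dependence on the degrees $d_i$. The Kontsevich moduli space is the natural candidate because the condition ``$f_i$ and $f_j$ share roots'' is naturally a condition about configurations of points mapping to a line, but verifying that its boundary strata do not blow up the Betti numbers faster than polynomially, and that the relevant local system extends as a well-behaved (e.g. tame, or IC-) sheaf across the boundary, is the delicate point. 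Everything after that — the weight bound and the summation of the multiple geometric series — is routine.
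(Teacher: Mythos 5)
Your plan is in the same spirit as the paper --- purity, the decomposition theorem, and a compactification via Kontsevich stable maps --- but the concrete route differs substantially, and your version has two genuine gaps. The paper does not attempt to bound the stalk of $K_{d_1,\ldots,d_m}$ at an arbitrary tuple $(f_1,\ldots,f_m)$. Instead it bounds two different things: the \emph{sums} $\lambda(d_1,\ldots,d_m;M)=\sum_{f_i\in\mathcal{M}_{d_i}}a(f_1,\ldots,f_m;M)$, via the Grothendieck--Lefschetz formula, the decomposition theorem applied to $R\ev_*$ on $\ol{\mathcal{M}}_{0,r}(\P^1,1)_{\A^r}$, the explicit description of the intermediate extension across the normal-crossings boundary in Prop.~\ref{prop:IC_is_sheaf}, a specialization to characteristic zero, and a Fox--Neuwirth cell count; and separately the \emph{prime-power} coefficients $a(\pi^{e_1},\ldots,\pi^{e_m};M)$, via Sawin's axioms (the local-to-global relation, the weight bound $|\alpha_j|\le q^{\sum e_i/2-1}$, and $\G_m$-localization, which identifies the multiplicities $c_j$ with stalk dimensions at the single fixed point $(T^{e_1},\ldots,T^{e_m})$). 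Twisted multiplicativity then assembles an Euler product whose absolute convergence is the theorem. The key structural advantage is that stalk information is needed at only one point per tuple of exponents, and the Euler product exploits the fact that most $a$-coefficients have absolute value $\le 1$, giving a convergence polydisc roughly $|u_i|<\min\{q^{-1},C^{-1}q^{-1/2}\}$, noticeably larger than what your crude uniform stalk bound (roughly $|u_i|<q^{-3/2}$) would yield.

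The two gaps: first, your claim that the stalk dimension grows \emph{polynomially} in the $d_i$ is unjustified and almost certainly false; the dimension bound the paper proves is exponential, of the form $O_m((64m)^{\sum d_i})$, coming from counting strata indexed by trees. An exponential but $q$-independent bound still produces a non-empty polydisc --- you just shrink $|u_i|$ by a factor absorbing the base --- so this is repairable, but it has to be stated and proved, and the ``known bounds on Betti numbers of such moduli spaces'' you invoke are not polynomial. Second, and more seriously, to turn the decomposition theorem into a bound on the stalk at an \emph{arbitrary} $(f_1,\ldots,f_m)$ you would need, via proper base change, to control $R\Gamma(\ev^{-1}(f_1,\ldots,f_m),-)$ for \emph{every} configuration, with coefficients in the extension of the Kummer local system across the boundary. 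Your sketch says nothing about how that local system extends: it does not simply extend as a lisse sheaf, and Prop.~\ref{prop:IC_is_sheaf} --- showing the intermediate extension is $g'_!g''_*$ of a lisse sheaf on the complement of a normal crossings divisor --- is one of the paper's main technical inputs. Nor does your sketch explain how the cohomology of the varying fibers (products of $\ol{\mathcal{M}}_{0,k+1}$'s) is controlled uniformly in the configuration, or how the quotient by the Young subgroup $S_{d_1}\times\cdots\times S_{d_m}$ is handled. The paper's axiomatic reduction to prime powers and to the single $\G_m$-fixed stalk sidesteps exactly this difficulty.
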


When $M_{1,1}=\cdots=M_{1,s}=0$ and $M_{1,s+1},\ldots,M_{1,m}\ne0$,
the functional equations relate the series $L\left(u_{1},\ldots,u_{m};M\right)$
to another series, which is modified in two ways: 1. some fudge factors
are added to the coefficients, and 2. the matrix $M$ is replaced
with another matrix $M'$. 

Let $\psi$ be the non-trivial additive character $e^{2\pi i\Tr_{\F_{q}/\F_{p}}(-)/p}$
on $\F_{q}$, and let $G\left(\chi,\psi\right)=\sum_{x\in\F_{q}^{\times}}\chi(x)\psi(x)$. 
\begin{defn}
For non-negative integers $d_{s+1},\ldots,d_{m}\ge0$ and a list of
integers $M_{1,s+1},\ldots,M_{1,m}$, let 
\[
\fudge\left(d_{s+1},\ldots,d_{m};M_{1,s+1},\ldots,M_{1,m}\right)\coloneqq\frac{\chi(-1)^{\sum_{s+1\le i<j\le m}d_{i}d_{j}M_{1,i}}(-1)^{\sum_{i\ge s+1}\frac{d_{i}(d_{i}-1)(q-1)}{4}}}{\prod_{i\ge s+1}G\left(\chi^{M_{1,i}},\psi\right)^{d_{i}}}.
\]
\end{defn}

\begin{defn}
Let $M$ be a symmetric $m\times m$ matrix with coefficients in $\Z/n\Z$
for $n$ even.  Define $M'$ to be another symmetric $m\times m$
matrix such that 
\begin{enumerate}
\item $M'_{i,j}=M_{i,j}+M_{1,i}+M_{1,j}$ for $j>i\ge s+1,$ 
\item $M'_{i,i}=M_{i,i}+M_{1,i}+n/2$ for $i\ge s+1$, 
\item $M'_{1,i}=-M_{1,i}$ for all $i$,
\item $M_{i,j}'=M_{i,j}$ for $j>i>1$ and $i\le s$, and 
\item $M_{i,i}'=M_{i,i}$ for $i\le s$.
\end{enumerate}
\end{defn}

\begin{rem}
One should think of $a$-coefficients with matrix $M'$ (in comparison
to $a$-coefficients with matrix $M$) as playing a similar role to
the conjugate of a Dirichlet character. In fact, note that $\left(M'\right)'=M$.
\end{rem}

Define $L_{\fudge}\left(u_{1},\ldots,u_{m};M\right)$ to be a slight
variant of $L\left(u_{1},\ldots,u_{m};M\right)$:
\begin{align*}
 & L_{\fudge}\left(u_{1},\ldots,u_{m};M\right)\\
 & \coloneqq\sum_{f_{1},\ldots,f_{s}\in\F_{q}[t]^{+}}\sum_{d_{s+1},\ldots,d_{m}}b\left(d_{\ge s+1};M_{1,\ge s+1}\right)\sum_{f_{s+1}\in\mathcal{M}_{d_{s+1}},\ldots,f_{m}\in\mathcal{M}_{d_{m}}}a\left(f_{1},\ldots,f_{m};M'\right)u_{1}^{d_{1}}\cdots u_{m}^{d_{m}},
\end{align*}
where 
\[
b\left(d_{\ge s+1};M_{1,\ge s+1}\right)=\begin{cases}
\frac{1}{q^{\sum_{i=s+1}^{m}d_{i}/2}\fudge\left(d_{\ge s+1};M_{1,\ge s+1}\right)} & \text{if }n\text{ divides }\sum_{i=s+1}^{m}d_{i}M_{1,i},\\
\frac{\chi(-1)^{\sum_{i=s+1}^{m}d_{i}M_{1,i}}G\left(\chi^{\sum_{i=s+1}^{m}d_{i}M_{1,i}},\psi\right)}{q^{1+\sum_{i=s+1}^{m}d_{i}/2}\fudge\left(d_{\ge s+1};M_{1,\ge s+1}\right)} & \text{else}.
\end{cases}
\]

\begin{rem}
In the special case $n=2$ where $\chi$ is a quadratic character
and $q\equiv1\bmod4$, we have $M'=M$ and $L_{\fudge}$ is simply
a change of variables of $L$. 
\end{rem}

Also, note that permuting the variables of $L\left(u_{1},\ldots,u_{m};M\right)$
is equivalent to permuting the entries of $M$ up to adding a sign
into the coefficients of the series, so it suffices to consider functional
equations in the first variable. 

Our second main result is as follows:
\begin{thm}
\label{thm:mainresult}Let $n$ be a positive even integer, $m$ and
$s$ be positive integers, and $M$ a symmetric $m\times m$ matrix
with coefficients in $\Z/n\Z$. Without loss of generality, assume
$M_{1,1}=\cdots=M_{1,s}=0$ and $M_{1,s+1},\ldots,M_{1,m}$ are not
zero, with $s\ge1$. Let $\zeta_{n}$ be a primitive complex $n$th
root of unity. Then, using the notation above, we have the functional
equation 
\begin{align*}
 & u_{1}\left(qu_{1}-1\right)L\left(u_{1},\ldots,u_{m};M\right)\\
 & =\left(qu_{1}-1\right)L_{\fudge}\left(\frac{1}{qu_{1}},u_{2},\ldots u_{s},q^{1/2}u_{1}u_{s+1},\ldots,q^{1/2}u_{1}u_{m};M\right)\\
 & \qquad-\frac{qu_{1}+u_{1}-2}{n}\sum_{0\le j\le n-1}L_{\fudge}\left(\frac{1}{qu_{1}},u_{2},\ldots,u_{s},\zeta_{n}^{jM_{1,s+1}}q^{1/2}u_{1}u_{s+1},\ldots,\zeta_{n}^{jM_{1,m}}q^{1/2}u_{1}u_{m};M\right),
\end{align*}
which is an equality of analytic functions on a domain including the
region of convergence of both sides.
\end{thm}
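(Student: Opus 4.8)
The plan is to reduce the multi-variable functional equation to a statement about the single variable $u_1$, treating the other variables $u_2,\ldots,u_m$ as parameters, and then to prove that reduced identity on the level of the perverse sheaves whose trace functions give the $a$-coefficients. Concretely, I would first fix the degrees $d_2,\ldots,d_m$ (equivalently fix the monic polynomials $f_2,\ldots,f_m$) and collect the terms of $L(u_1,\ldots,u_m;M)$ with those fixed degrees into a single-variable generating function in $u_1$; the twisted multiplicativity and the construction of Sawin express the coefficient of $u_1^{d_1}$ as the trace of Frobenius on the cohomology of a sheaf on $\mathbb{A}^{d_1}_{\mathbb{F}_q}$ (the space of monic degree-$d_1$ polynomials $f_1$) obtained by restricting the big perverse sheaf for $M$. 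The key input is the one-variable relationship between a Dirichlet character and its Fourier transform — the ``multi-variable generalization'' advertised in the abstract — which here, because we are isolating the first variable, becomes an honest one-variable Fourier/Mellin identity with the other variables entering only through the sheaf on which we take the Fourier transform.

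Next I would identify the geometric operation underlying the substitution $u_1 \mapsto 1/(qu_1)$, $u_i \mapsto q^{1/2}u_1 u_i$ for $i \ge s+1$: this is the effect on trace functions of applying a Fourier transform (along the affine space of polynomials $f_1$, or rather along the relevant vector-space direction controlling the interaction of $f_1$ with $f_{s+1},\ldots,f_m$), followed by the normalization by Gauss sums recorded in the $\fudge$ factor and the $b$-coefficients. The matrix $M'$ appears precisely because Fourier transform swaps a sheaf built from $\chi$ with one built from $\bar\chi$ on the dual space, and conjugating the character $\chi^{M_{1,i}}$ to $\chi^{-M_{1,i}}$ forces the self-intersection and cross terms of $M$ to be corrected by $M_{1,i}+M_{1,j}$ and $M_{1,i}+n/2$, exactly items (i)–(iii) of the definition of $M'$; items (iv)–(v) say nothing happens in the directions $i \le s$ where $M_{1,i}=0$, consistent with the Fourier transform being trivial there. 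I would make this precise using the explicit formula for intermediate extensions from the complement of a normal crossings divisor (cited in the excerpt) to control what the Fourier transform does along the boundary strata where some $f_i$ acquire common factors, and the density trick for simple perverse sheaves to reduce checking the sheaf-theoretic identity to checking it generically, where it is the clean one-variable statement.

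The reason the identity has the particular shape $u_1(qu_1-1)L = (qu_1-1)L_{\fudge}(\ldots) - \frac{qu_1+u_1-2}{n}\sum_j L_{\fudge}(\ldots)$, with a sum over $n$th roots of unity and the rational prefactors $u_1(qu_1-1)$, $qu_1+u_1-2$, is that Fourier transform on $\mathbb{A}^1$ does not exactly preserve the class of sheaves in play: it introduces a ``correction'' supported at $0$ and $\infty$ (the punctures corresponding to $u_1 = 0$ and $u_1 = \infty$, i.e. $qu_1 - 1$ and $u_1$). The averaging $\frac1n\sum_{0\le j\le n-1}\zeta_n^{j M_{1,i}}$ over $j$ is the projector onto the part of the coefficient where $n \mid \sum_i d_i M_{1,i}$ — i.e. it kills exactly the ``else'' branch of the $b$-coefficient and isolates the main branch — so the two terms on the right-hand side together reconstruct the full $L_{\fudge}$ plus the boundary correction, and clearing denominators by $u_1(qu_1-1)$ turns the whole thing into a polynomial identity in $u_1$ that can be verified coefficient by coefficient. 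Once the sheaf-level identity (plus the explicit boundary contributions) is established over $\overline{\mathbb{F}_q}$, taking traces of Frobenius and summing the resulting geometric series in $u_1$ — legitimate on the overlap of the two regions of convergence guaranteed by Theorem \ref{thm:firstmain} — yields the stated equality of analytic functions, and analytic continuation extends it to the claimed domain.

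The main obstacle I expect is the boundary analysis: controlling precisely how the Fourier transform of Sawin's perverse sheaf behaves over the loci where $f_1$ shares roots with the other $f_i$ (the non-generic strata of the normal crossings divisor), since it is exactly there that the clean one-variable character/Fourier-transform identity fails and is responsible for the extra term with the root-of-unity average and the rational function $qu_1+u_1-2$. Pinning down that correction term — showing it is governed by the intermediate-extension formula and produces exactly the $\frac{qu_1+u_1-2}{n}\sum_j$ contribution with the right $\fudge$-normalization — is the technical heart of the argument; the rest is bookkeeping of Gauss sums and signs, which is what the $\fudge$ and $b$ definitions are designed to absorb.
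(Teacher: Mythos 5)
Your proposal has the right ingredients at a high level and the same strategic starting point as the paper: fix $f_2,\ldots,f_m$, collect coefficients of $u_1^t$ into a single-variable series $P_{f_2,\ldots,f_m;M}(u_1)$, prove a Fourier-transform relationship between the $a$-coefficients for $M$ and for $M'$ (using the explicit intermediate-extension formula and the density trick for simple perverse sheaves — this is the paper's Proposition~\ref{prop:relationship}), and then use a roots-of-unity filter to assemble the multi-variable functional equation. Your reading of the definition of $M'$ as the ``conjugate'' matrix arising from the $\chi\leftrightarrow\bar\chi$ swap is also on target.

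However, the proposal is missing the central structural dichotomy on which the derivation of the functional equation hinges, and without it you cannot pin down the precise shape of the identity. Once the Fourier relationship is in hand, the paper's Section~4 derivation is a purely elementary manipulation of the numbers $S_t = \sum_{f\in\mathcal M_t}a(f,f_2,\ldots,f_m;M)$, and the key fork in the road is whether the residue character $\prod_{i=s+1}^m\bigl(\tfrac{-}{f_i}\bigr)_\chi^{M_{1,i}}$ is trivial on $\F_q^\times$ (equivalently, whether $n\mid\sum_{i=s+1}^m d_iM_{1,i}$). In the trivial case one gets the local functional equation $(qu_1-1)P(u_1) = \tfrac{1}{\fudge}\,u_1^{d-1}(1-u_1)P_{M'}(1/(qu_1))$, and in the nontrivial case one gets a different one with no $(1-u_1)$ factor (Propositions~\ref{prop:goodlocalfunc} and \ref{prop:badlocalfunc}). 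The roots-of-unity average separates $L$ into exactly these two pieces, and that is why the right-hand side has two $L_{\fudge}$ terms with \emph{different} rational prefactors $qu_1-1$ and $-\frac{qu_1+u_1-2}{n}$. Your proposal frames the average as a projector onto one branch of the $b$-coefficient, which is only half the story and does not explain why two prefactors appear, nor their exact form.

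Your heuristic explanation of the polar factor $u_1(qu_1-1)$ — ``a correction supported at $0$ and $\infty$'' from Fourier transform not preserving the class of sheaves — is not how the pole actually arises and would not lead to the precise coefficient. The $qu_1-1$ comes from the elementary fact that $S_{t+1}=qS_t$ for $t\ge d$ (with $d=d_{s+1}+\cdots+d_m$), so $\sum_{t\ge d}S_tu_1^t$ is a geometric series with a simple pole at $u_1=1/q$; this is Lemma~\ref{lem:expressionforP} and has nothing to do with boundary strata of a Fourier transform. Relatedly, you frame the functional-equation derivation as establishing a ``sheaf-level identity,'' but the paper proves no such identity: the perverse-sheaf machinery lives entirely in the proof of the Fourier relationship (Proposition~\ref{prop:relationship}), and the passage from that relationship to the functional equation is pure bookkeeping with the $S_t$. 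Filling in the two-case analysis and the geometric-series origin of the poles is what would be needed to turn your outline into a proof.
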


\begin{rem}
At first glance, this does not obviously look like a functional equation
because the left-hand side involves $L$ whereas the right-hand side
involves $L_{\fudge}.$ This is due to $\fudge\left(d_{\ge s+1};M_{1,\ge s+1}\right)$
being hard to separate in such a way that its factors are absorbed
into the change of variables in the functional equation. One way to
rectify this is by defining a refined version of $L$ including congruence
conditions on the powers of the $u_{i}$. This version of the functional
equation relating multiple $L$ with congruence conditions to $L$
with a change of variables and possibly a different set of congruence
conditions is easy to extract from the theorem above (but requires
introducing new notation that will not be used again).
\end{rem}

\begin{rem}
The assumption on the parity of $n$ is mainly due to the fact that
the unique multiplicative character $\F_{q}^{\times}\to\C^{\times}$
of order two is simply $\chi^{n/2}$. Note that if $\chi$ has odd
order and $q$ is not a power of 2, we can replace $\chi$ with its
square root, double $n$, and double the entries of $M$ to get an
equivalent series where $n$ is even.
\end{rem}

\begin{rem}
We are not able to prove that $L\left(u_{1},\ldots,u_{m};M\right)$
has meromorphic continuation to $\C^{m}$ in general, since we only
have a subset of the group of possible functional equations (even
then, it may not be possible). For this reason, we do not attempt
to optimize the region for which $L\left(u_{1},\ldots,u_{m};M\right)$
has meromorphic continuation, i.e. such as using tools like Bochner's
tube theorem (c.f. \cite{BFG}).
\end{rem}

We prove the main results in two steps. The first step is to establish
the functional equation as a formal equality of power series. The
second step is to prove both sides of the functional equation converge
in regions of $\C^{m}$, say, $R_{1}$ and $R_{2}$, and to verify
that $R_{1}\cap R_{2}$ is not empty. In particular, we prove Theorem
\ref{thm:firstmain} in the second step and Theorem \ref{thm:mainresult}
as a combination of both steps.

The first step generalizes Proposition 4.4 of \cite{sawin_general},
which explicitly computes the $a$-coefficient when $M$ is of the
form $\begin{bmatrix}0 & -1\\
-1 & n/2
\end{bmatrix}$. The main trick is to establish a multi-variable variant of the relationship
between the conjugate of a Dirichlet character and the Fourier transform
of the Dirichlet character. Using this input, obtaining the functional
equation is mostly a formal, though tedious, calculation, that closely
mirrors that of the single-variable setting. 

To establish this connection with the Fourier transform in his example,
Sawin uses a density argument, where he first proves the claim easily
for tuples of monic polynomials that are square-free and pair-wise
relatively prime, and then uses properties of perverse sheaves to
extend the result to all tuples. 

In our more general setting, the claim, even for tuples of monic polynomials
that are square-free and pair-wise relatively prime, is not obvious,
and we use another geometric idea: The intermediate extension (which
is typically a complex in the derived category) of a tame lisse sheaf
with finite monodromy on the complement of a normal crossings divisor
has an explicit description as a genuine sheaf. Using this, we obtain
an explicit formula for the $a$-coefficients for a slightly larger
set of tuples. Then, using a boot-strapping argument, we obtain the
relationship between an $a$-coefficient with matrix $M'$ (the analogous
notion of a conjugate Dirichlet character) and the Fourier transform
of an $a$-coefficient with matrix $M$.

The second step is mainly about bounding individual $a$-coefficients
for fixed tuples $\left(f_{1},\ldots,f_{m}\right)$ and sums of $a$-coefficients
over all tuples $\left(f_{1},\dots,f_{m}\right)$ of fixed degrees.
Indeed, with these bounds, obtaining the regions of convergence follows
from elementary analysis of power series. The general strategy for
bounding comes from another geometric idea, namely by compactifying
the space of tuples of monic polynomials that are square-free and
pair-wise relatively prime using a quotient of the Kontsevich moduli
stack $\mathcal{\ol{\mathcal{M}}}_{0,r}\left(\P^{1},1\right)$---the
space of stable maps from a genus zero curve to $\P^{1}$ of degree
one---by a Young subgroup of the symmetric group $S_{r}$. The decomposition
theorem for perverse sheaves allows us to bound the $a$-coefficients
in terms of cohomology of this compactification, which, after translating
to the situation over characteristic zero, can be done by bounding
the number of cells using a combinatorial argument related to counting
rooted planar trees. 

Finally, unless otherwise stated, a variety is an irreducible, reduced,
separated scheme of finite type over a field.

\begin{ack} First and foremost, I would like to thank my advisor
Will Sawin for suggesting this problem; I'm immensely grateful to
Will for his tremendous and invaluable guidance, both high-level and
technical,  throughout every step of this project. I would also like
to thank Adrian Diaconu, Anh Trong Nam Hoang, Donggun Lee, Takyiu
Liu, Amal Mattoo, Che Shen, Fan Zhou, and the anonymous reviewer for
helpful discussions. 

The author was partially supported by National Science Foundation
Grant Number DGE-2036197.\end{ack}

\section{Preliminaries}

The section comprises a collection of four subsections that recall
and prove some technical results that may be of independent interest. 

The first subsection gives a quick review of the needed function field
number theory and explanation of Sawin's general construction of multiple
Dirichlet series. 

The second subsection explains in detail an explicit formula for a
specific intermediate extension. A priori, this is an abstract object
in the derived category of $\ell$-adic sheaves, but the specific
situation we work in allows us to view the intermediate extension
as a genuine sheaf. This formula is crucial in both steps of establishing
the functional equation. For the first step, the formula is one of
the key ingredients in proving the relationship between $a$-coefficients
and Fourier transforms. For the second step, the formula, in combination
with the decomposition theorem, is the key tool to relate $a$-coefficients
to cohomology of lisse sheaves on the Kontsevich moduli space $\ol{\mathcal{M}}_{0,r}\left(\P^{1},1\right)$.

The third subsection is about proving bounds on the counts of different
kinds of trees. 

The fourth subsection gives bounds for the cohomology of arbitrary
lisse sheaves on the Kontsevich moduli space $\ol{\mathcal{M}}_{0,r}\left(\P^{1},1\right)$
quotiented by a Young subgroup. The main input is the bounds on tree
counts from the previous subsection.

\subsection{\label{subsec:notation}Notation and Sawin's construction}

We first establish notation from function field analytic number theory
and for the rest of the paper, recalling some from the introduction
(for more details, look at \cite{sawin_general}):
\begin{enumerate}
\item $q$ is a fixed power of an odd prime $p$.
\item $\ell$ is a fixed prime not equal to $p$, and we fix an isomorphism
$\ol{\Q_{\ell}}\cong\C$.
\item $\F_{q}[T]$ is the ring of polynomials in one variable over $\F_{q}$. 
\item $\F_{q}[T]^{+}$ is the subset of monic single-variable polynomials
over $\F_{q}$.
\item $\mathcal{M}_{t}$ is the subset of monic single-variable polynomials
over $\F_{q}$ of degree $t$.
\item $\mathcal{P}_{<t}$ is the subset of single-variable polynomials over
$\F_{q}$ of degree less than $t$.
\item $m$ and $n$ are fixed positive integers.
\item $M$ is a symmetric $m\times m$ matrix with entries in $\Z/n\Z$.
\item $\chi\colon\F_{q}^{\times}\to\C^{\times}$ is a non-trivial multiplicative
character of order $n$.
\item $\psi\colon\F_{q}\to\C^{\times}$ is the non-trivial additive character
$e^{2\pi i\frac{\Tr_{\F_{q}/\F_{p}}(-)}{p}}.$
\item $G\left(\chi,\psi\right)$ is the Gauss sum $\sum_{x\in\F_{q}^{\times}}\chi(x)\psi(x).$
\item For $e$ a positive integer, $\chi_{e}\colon\F_{q^{e}}^{\times}\to\C^{\times}$
is the multiplicative character given by the composition $\F_{q^{e}}^{\times}\overset{\Nm}{\to}\F_{q}^{\times}\overset{\chi}{\to}\C^{\times}$,
where $\Nm$ is the norm map.
\item The resultant $\Res\left(f,g\right)$ of $f,g\in\F_{q}[T]$ is defined
as the product of values of $f$ at the roots of $g$. In particular,
$\Res\left(f,g\right)=0$ iff $f$ and $g$ share a common root.
\item The residue symbol $\left(\frac{f}{g}\right)_{\chi}$ is defined as
$\chi\left(\Res\left(f,g\right)\right)$. 
\item The residue $\res(f)$ of a rational function $f$ is defined as the
coefficient of $T^{-1}$ when $f$ is written as a Laurent series.
\item $e\left(-\right)$ is the composition $\psi\left(\res\left(-\right)\right).$
\end{enumerate}
Note that affine space $\A_{\F_{q}}^{d}$ can be viewed as a moduli
space for monic polynomials of degree $d$. Indeed, for an $\F_{q}$-algebra
$R$, $\A_{\F_{q}}^{d}\left(R\right)=R^{d}=\left\{ \left(r_{d-1},\ldots,r_{0}\right):r_{i}\in R\right\} $,
which we identify with the set of monic single-variable polynomials
over $R$ of degree $d$: $\left\{ t^{d}+r_{d-1}t^{d-1}+\cdots+r_{0}:r_{i}\in R\right\} $. 

Consequently, for non-negative integers $d_{1},\ldots,d_{m}$, we
can view $\prod_{i=1}^{m}\A_{\F_{q}}^{d_{i}}$ as a moduli space for
tuples of monic polynomials of fixed degrees $d_{1},\ldots,d_{m}$. 

Then, for such $d_{1},\ldots,d_{m}$, define the polynomial function
\[
F_{d_{1},\ldots,d_{m}}=\prod_{i=1}^{m}\Res\left(f_{i}',f_{i}\right)^{M_{i,i}}\prod_{1\le i<j\le r}\Res\left(f_{i},f_{j}\right)^{M_{i,j}}
\]
on $\prod_{i=1}^{m}\A^{d_{i}}$. 

Let $U$ be the open subset for which $F_{d_{1},\ldots,d_{m}}$ is
invertible. Geometrically, $F_{d_{1},\ldots,d_{m}}$ defines a morphism
$\prod_{i=1}^{m}\A^{d_{i}}\to\A^{1}$, and $U$ is simply the preimage
of $\G_{m}\subset\A^{1}$. In particular, by abuse of notation, $F_{d_{1},\ldots,d_{m}}$
also defines a morphism $U\to\G_{m}$. 

On $\G_{m}$, we have a Kummer sheaf associated to $\chi$, which
is perhaps best understood as a one-dimensional representation of
the etale fundamental group of $\G_{m}$. Namely, there is a natural
surjection $\pi_{1}\left(\G_{m}\right)\twoheadrightarrow\F_{q}^{\times}$
arising from Kummer theory, and the composition $\pi_{1}\left(\G_{m}\right)\twoheadrightarrow\F_{q}^{\times}\overset{\ol{\chi}}{\to}\C^{\times}$
gives a continuous one-dimensional representation of $\pi_{1}\left(\G_{m}\right)$.
By the correspondence between such representations and lisse rank
one etale sheaves, we obtain the Kummer sheaf $\mathcal{L}_{\chi}$
on $\G_{m}$. 

Using $F_{d_{1},\ldots,d_{m}}\colon U\to\G_{m}$, we can pull back
$\mathcal{L}_{\chi}$ to $U$, which we denote by $\mathcal{L}_{\chi}\left(F_{d_{1},\ldots,d_{m}}\right)$. 

Recall that there is an abelian category of ``perverse sheaves''
inside the derived category of $\ell$-adic sheaves, which is given
by the heart of a certain \textit{t}-structure, c.f. \cite{bbdg}.
There are two important examples of perverse sheaves that will appear
in this paper:
\begin{enumerate}
\item If $X$ is a smooth variety and $L$ is a lisse sheaf on $X$, then
$L\left[\dim X\right]$ is perverse.
\item If $X$ is a variety, $j\colon U\subset X$ is the inclusion of an
open subset, and $A$ is a perverse sheaf on $U$, then there is a
perverse sheaf $j_{!*}A$ on $X$, called the intermediate extension
of $A$, defined as the unique extension of $A$ that has no non-trivial
sub-objects or quotients supported on $X\backslash U$. 
\end{enumerate}
Combining these two examples, $j_{!*}\left(\mathcal{L}_{\chi}\left(F_{d_{1},\ldots,d_{m}}\right)\left[d_{1}+\cdots+d_{m}\right]\right)$
is a perverse sheaf on $\prod_{i=1}^{m}\A^{d_{i}}$, and Sawin defines
\[
K_{d_{1},\ldots,d_{m}}=j_{!*}\left(\mathcal{L}_{\chi}\left(F_{d_{1},\ldots,d_{m}}\right)\left[d_{1}+\cdots+d_{m}\right]\right)\left[-d_{1}-\cdots-d_{m}\right],
\]
shifted up so that generically $K_{d_{1},\ldots,d_{m}}$ agrees with
$\mathcal{L}_{\chi}\left(F_{d_{1},\ldots,d_{m}}\right)$.

Then, for a tuple $\left(f_{1},\ldots,f_{m}\right)\in\prod_{i=1}^{m}\A^{d_{i}}$,
Sawin defines the $a$-coefficient to be 
\begin{equation}
\widetilde{a}\left(f_{1},\ldots,f_{m};q,\chi,M\right)=\Tr\left(\Fr_{q},\left(K_{d_{1},\ldots,d_{m}}\right)_{\left(f_{1},\ldots,f_{m}\right)}\right),\label{eq:defofacoeffviaperverse}
\end{equation}
where $\Fr_{q}$ is the geometric Frobenius. 
\begin{notation}
Note that the $a$-coefficient is a function of $f_{1},\ldots,f_{m},q,\chi,$
and $M$; however, when $q,\chi,$ and/or $M$ are clear from context,
we will drop them. 
\end{notation}

Before recalling the main theorem of \cite{sawin_general}, we review
what Sawin calls ``compatible systems of sets of ordered pairs of
Weil numbers and integers.'' 

A Weil number is an algebraic number $\alpha$ such that there exists
some $i$ such that for any embedding $\ol{\Q_{\ell}}$ into $\C$,
the absolute value of the image of $\alpha$ is $q^{i/2}$. 

A set of ordered pairs of Weil numbers and integers is simply a set
of ordered pairs $\left(\alpha_{j},c_{j}\right)$ indexed by $j$,
such that $\alpha_{j}$ is a Weil number, $c_{j}$ is a non-zero integer,
and $\alpha_{j}\ne\alpha_{j'}$ for $j\ne j'$. 

A function $\gamma\left(-,-\right)$ from pairs of a prime power $q$
(of a fixed prime $p$) and a multiplicative character $\chi\colon\F_{q}^{\times}\to\C^{\times}$
to Weil numbers is said to be a compatible system of Weil numbers
if 
\[
\gamma\left(q^{e},\chi_{e}\right)=\gamma\left(q,\chi\right)^{e}
\]

A function $J\left(-,-\right)$ from pairs of a prime power $q$ (of
a fixed prime $p$) and a multiplicative character $\chi\colon\F_{q}^{\times}\to\C^{\times}$
to ordered pairs of Weil numbers and integers is said to be a compatible
system of sets of ordered pairs of Weil numbers and integers if $J\left(q,\chi\right)=\left\{ \left(\alpha_{j},c_{j}\right)\right\} $
means $J\left(q^{e},\chi_{e}\right)=\left\{ \left(\alpha_{j}^{e},c_{j}\right)\right\} $.
In the theorem below, we will introduce a function $J$ depending
on parameters $d_{1},\ldots,d_{m},q,\chi,$ and $M$; when $q$ and
$\chi$ vary, $J$ is a compatible system of sets of ordered pairs
of Weil numbers and integers.

The main theorem of \cite{sawin_general} gives an axiomatic characterization
of $a$-coefficients:
\begin{thm}
\label{thm:axioms}Using the notation of this subsection, the data
of $a\left(\underset{m}{\underbrace{-,\ldots,-}};-,-,M\right)$ a
complex-valued function on tuples of monic polynomials $\left(f_{1},\ldots,f_{m}\right)$
and a pair of a prime power $q$ and a multiplicative character $\chi$,
along with $J\left(\underset{m}{\underbrace{-,\ldots,-}};-,-,M\right)$
a function from tuples of non-negative integers $\left(d_{1},\ldots,d_{m}\right)$
to compatible systems of sets of ordered pairs of Weil numbers and
integers uniquely satisfies the following axioms:
\begin{enumerate}
\item (Twisted multiplicativity) If $\prod_{i=1}^{m}f_{i}$ and $\prod_{i=1}^{m}g_{i}$
are relatively prime, then 
\begin{align*}
 & a\left(f_{1}g_{1},\ldots,f_{m}g_{m};M\right)\\
 & =a\left(f_{1},\ldots,f_{m};M\right)a\left(g_{1},\ldots,g_{m};M\right)\prod_{1\le i\le m}\left(\frac{f_{i}}{g_{i}}\right)_{\chi}^{M_{i,i}}\left(\frac{g_{i}}{f_{i}}\right)_{\chi}^{M_{i,i}}\prod_{1\le i<j\le m}\left(\frac{f_{i}}{g_{j}}\right)_{\chi}^{M_{i,j}}\left(\frac{g_{i}}{f_{j}}\right)_{\chi}^{M_{i,j}}.
\end{align*}
 
\item $a\left(1,\ldots,1;M\right)=a\left(1,\ldots,1,f,1,\ldots,1;M\right)=1$
for all linear polynomials $f$.
\item 
\[
a\left(\pi^{e_{1}},\ldots,\pi^{e_{m}};M\right)=\left(\frac{\pi'}{\pi}\right)_{\chi}^{\sum_{i=1}^{m}e_{i}M_{i,i}}\sum_{j\in J\left(e_{1},\ldots,e_{m};q,\chi,M\right)}c_{j}\alpha_{j}^{\deg\pi}
\]
for a prime $\pi$.
\item 
\[
\sum_{f_{1}\in\mathcal{M}_{d_{1}},\ldots,f_{m}\in\mathcal{M}_{d_{m}}}a\left(f_{1},\ldots,f_{m};M\right)=\sum_{j\in J\left(d_{1},\ldots,d_{m};q,\chi,M\right)}c_{j}\frac{q^{\sum_{i=1}^{m}d_{i}}}{\ol{\alpha_{j}}}.
\]
\item $\left|\alpha_{j}\right|\le q^{\frac{\sum_{i=1}^{m}d_{i}}{2}-1}$
for $\sum_{i=1}^{m}d_{i}\ge2$. 
\end{enumerate}
\end{thm}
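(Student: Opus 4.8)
The plan is to recover (a geometric proof of) the characterization in two stages: first show the perverse‑sheaf data $(a,J)$ satisfies axioms (i)--(v), then show these axioms pin the data down uniquely. For existence, keep $a(f_1,\ldots,f_m;M)=\Tr(\Fr_q,(K_{d_1,\ldots,d_m})_{(f_1,\ldots,f_m)})$ as defined above, and declare $J(d_1,\ldots,d_m;q,\chi,M)$ to record the geometric Frobenius eigenvalues on $H^*(\prod_i\A^{d_i}_{\overline{\F_q}},K_{d_1,\ldots,d_m})$, each taken with multiplicity $(-1)^{\text{degree}}$ times its multiplicity, collapsing coincident eigenvalues and discarding zero net multiplicities. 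Compatibility under $q\rightsquigarrow q^e$, $\chi\rightsquigarrow\chi_e$ is automatic, since pullback of Kummer sheaves, intermediate extension, cohomology, and $\Tr(\Fr_q,-)$ all commute with extension of scalars and $\Fr_{q^e}=\Fr_q^e$; and the $\alpha_j$ are Weil numbers by Deligne purity, because $K_{d_1,\ldots,d_m}[d_1+\cdots+d_m]$ is a pure perverse sheaf --- the intermediate extension of the lisse, pure weight‑zero sheaf $\mathcal{L}_\chi(F_{d_1,\ldots,d_m})[d_1+\cdots+d_m]$.

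Axioms (i) and (iii) both rest on one geometric input: an étale‑local product decomposition of $\prod_i\A^{d_i}$ near a point, dictated by the factorization type of the corresponding tuple. Since splitting a polynomial into pairwise‑coprime factors is étale‑local (Hensel), near $(f_1g_1,\ldots,f_mg_m)$ with $\prod f_i,\prod g_i$ coprime the moduli space is étale‑locally $(\text{moduli near }\vec f)\times(\text{moduli near }\vec g)$, the function $F_{d_1,\ldots,d_m}$ factors as $F(\vec f)\cdot F(\vec g)\cdot\prod_{i,j}\Res(f_i,g_j)^{M_{i,j}}$ with the cross‑resultant factors units (so their pullbacks along $\mathcal{L}_\chi$ are locally constant with trace the residue symbols), and the intermediate extension --- compatible with smooth pullback and with external tensor products --- respects the decomposition; passing to stalks and traces converts the product of sheaves into the product of $a$‑values, which is (i). For (iii), run the same decomposition over $\F_{q^{\deg\pi}}$, where $\pi$ becomes one Galois orbit of linear factors: the $\Fr_q$‑trace of the $(\deg\pi)$‑fold Galois‑twisted external product is the $\Fr_{q^{\deg\pi}}$‑trace of a single factor, and the cross‑conjugate resultants assemble (up to a unit) into the discriminant of $\pi$, producing the prefactor $\left(\tfrac{\pi'}{\pi}\right)_\chi^{\sum e_iM_{i,i}}$. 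This reduces (iii) to computing the stalk of $K_{e_1,\ldots,e_m}$ at the totally degenerate tuple $((t-x)^{e_1},\ldots,(t-x)^{e_m})$ --- a point off the locus $U$, where one genuinely sees the intermediate extension --- which I would evaluate via the $\G_m$‑action rescaling the root coordinate, under which $\prod_i\A^{e_i}$ contracts onto exactly that point: the contraction principle then identifies the stalk with $R\Gamma(\prod_i\A^{e_i}_{\overline{\F_q}},K_{e_1,\ldots,e_m})$, i.e.\ with $J(e_1,\ldots,e_m)$, up to a Tate twist coming from the weight by which $F$ transforms under the action, which should absorb into the prefactor's normalization (controlling this twist is the delicate point). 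Axiom (ii) is the trivial base case: for $d_1+\cdots+d_m\le1$ the function $F_{d_1,\ldots,d_m}$ is the constant $1$, so $K$ is the constant sheaf and every stalk trace is $1$.

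For (iv), the Grothendieck--Lefschetz formula gives $\sum_{f_i\in\mathcal{M}_{d_i}}a(f_1,\ldots,f_m;M)=\sum_k(-1)^k\Tr(\Fr_q,H^k_c(\prod_i\A^{d_i}_{\overline{\F_q}},K_{d_1,\ldots,d_m}))$, and I convert this to the right‑hand side by Poincaré--Verdier duality on the smooth variety $\prod_i\A^{d_i}$ of dimension $N=d_1+\cdots+d_m$: since intermediate extension commutes with $\mathbb{D}$ and $\mathbb{D}\mathcal{L}_\chi\cong\mathcal{L}_{\overline\chi}(1)[2]$ on $\G_m$, one gets $\mathbb{D}K_{d_1,\ldots,d_m}\cong K_{d_1,\ldots,d_m}^{(\overline\chi)}(N)[2N]$, and since replacing $\chi$ by $\overline\chi$ is the automorphism of $\overline{\Q_\ell}\cong\C$ inducing complex conjugation (hence conjugates all Frobenius eigenvalues), the eigenvalue $\alpha$ on $H^k(\cdot,K)$ becomes $q^N/\overline\alpha$ on $H^{2N-k}_c(\cdot,K)$, with matching signs --- exactly (iv) with the same $J$. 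Axiom (v) is the one genuinely analytic input: $K_{d_1,\ldots,d_m}[N]$ being pure and perverse, Weil~II controls the weights of its cohomology, Artin vanishing on affine $\prod_i\A^{d_i}$ confines that cohomology to a bounded degree range, the non‑triviality of $\mathcal{L}_\chi$ (for $\chi\neq1$) kills the extreme cohomology groups, and the hard‑Lefschetz/duality symmetry intrinsic to intermediate extensions then yields $|\alpha_j|\le q^{N/2-1}$ for $N\ge2$.

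Uniqueness is an induction on $D=d_1+\cdots+d_m$, run simultaneously over all base fields $\F_{q^e}$. For $D\le1$, axiom (ii) forces $a\equiv1$ there, and then axioms (iii)--(iv) --- via the compatible families of identities $\sum_j c_j\alpha_j^e=1=\sum_j c_j/\overline{\alpha_j}^{\,e}$ for all $e$, i.e.\ all power sums of a finite multiset equal $1$ --- force $J=\{(1,1)\}$. For the step $D-1\to D$: twisted multiplicativity (i) expands $\sum_{f_i\in\mathcal{M}_{d_i}}a(f_1,\ldots,f_m;M)$ as a Dirichlet convolution of prime‑power values, hence via (iii) in terms of the $J(e_1,\ldots,e_m)$ with $e_1+\cdots+e_m\le D$; all those with $e_1+\cdots+e_m<D$ are known inductively (over every $\F_{q^e}$, by compatibility), while $J(d_1,\ldots,d_m)$ itself enters only through the $q$ terms $f_i=(t-x)^{d_i}$, contributing $q\sum_j c_j$. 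Equating with the right‑hand side of (iv) and running over all $\F_{q^e}$ produces a rational‑function identity $\sum_j\frac{c_j\beta_j x}{1-\beta_j x}-\frac{(\sum_j c_j)\,qx}{1-qx}=\mathcal{N}(x)$ with $\beta_j=q^D/\overline{\alpha_j}$ and $\mathcal{N}$ explicitly known; here axiom (v) is precisely what forces $\beta_j\neq q$ for all $j$ (as $\beta_j=q$ would give $\alpha_j=q^{D-1}$, violating the bound for $D\ge2$), so the poles of $\mathcal{N}$ separate and one reads off every $(\beta_j,c_j)$, hence $\{(\overline{\alpha_j},c_j)\}$, hence --- conjugating --- $J(d_1,\ldots,d_m;q,\chi,M)$; then (iii) recovers $a$ at all prime‑power tuples of total degree $D$ and (i) at all tuples. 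I expect the real obstacles to be on the existence side: making the étale‑local product decomposition and its compatibility with the intermediate extension precise enough to nail (i) and (iii), controlling the Tate/monodromy twist in the contracting‑$\G_m$ computation of the degenerate stalk, and establishing the Riemann‑Hypothesis‑type bound (v).
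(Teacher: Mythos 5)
The paper you are working from does not actually prove this theorem: it is quoted verbatim as the main theorem of \cite{sawin_general} and used as a black box. The only part of the argument the present paper restates is the definition of the data for the existence half, namely that $a$ is the stalk trace of $K_{d_1,\ldots,d_m}$ and $J(d_1,\ldots,d_m;q,\chi,M)$ is the set of Frobenius eigenvalues with signed multiplicity on the stalk $\left(K_{d_1,\ldots,d_m}\right)_{(t^{d_1},\ldots,t^{d_m})}$. So there is no in-paper proof to compare your proposal against; what can be said is whether your reconstruction is consistent with the paper's framing and internally sound.

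On that front, your existence sketch is consistent: your $J$ is defined via $H^*(\prod_i\A^{d_i},K)$ rather than the degenerate stalk, but you yourself invoke the $\G_m$-contraction argument identifying the two, which is exactly the lemma (Sawin's lemma 2.16, cited later in section 5) the paper uses to pass between them --- so this is a presentational difference, not a substantive one. The strategy you lay out for (i) and (iii) (\'etale-local product structure from Hensel factorization, compatibility of intermediate extension with smooth pullback and external product, Galois descent over $\F_{q^{\deg\pi}}$) and for (iv) (Grothendieck--Lefschetz plus Verdier duality exchanging $\chi$ with $\bar\chi$) is the natural route and plausibly mirrors Sawin's. Your honest flagging of the hard points --- the Tate twist bookkeeping in the contraction, and the ``$-1$'' in the exponent of axiom (v) --- is appropriate; that shift is genuinely the delicate part and requires an argument beyond bare purity and Artin vanishing.

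There is, however, a concrete error in your uniqueness step. When you isolate the contribution of $J(d_1,\ldots,d_m)$ from the degree-one primes, axiom (iii) over $\F_{q^e}$ gives $a\bigl((T-x)^{d_1},\ldots,(T-x)^{d_m}\bigr)=\sum_j c_j\alpha_j^{e}$ (not $\sum_j c_j$), and summing over the $q^e$ linear primes yields $q^e\sum_j c_j\alpha_j^{e}=\sum_j c_j(q\alpha_j)^e$. The rational-function identity should therefore read
\[
\sum_j\frac{c_j\beta_j x}{1-\beta_j x}-\sum_j\frac{c_j\,q\alpha_j\,x}{1-q\alpha_j x}=\mathcal{N}(x),
\qquad \beta_j=\frac{q^{D}}{\overline{\alpha_j}},
\]
not with $\frac{(\sum_j c_j)\,qx}{1-qx}$ as the second term. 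The mechanism you want is then that axiom (v) forces the two pole sets to be disjoint --- indeed $|q\alpha_j|\le q^{D/2}$ while $|\beta_j|\ge q^{D/2+1}$ --- so the poles in the outer annulus determine $\{(\beta_j,c_j)\}$, which recovers $J(d_1,\ldots,d_m)$. Your stated use of (v) to rule out $\beta_j=q$ is too weak to carry this; what (v) actually buys you is the full separation of the two families of poles. With that repair, the induction closes as you describe.
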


Existence is given by setting $a\left(f_{1},\ldots,f_{m};q,\chi,M\right)=\widetilde{a}\left(f_{1},\ldots,f_{m};q,\chi,M\right)$
using (\ref{eq:defofacoeffviaperverse}), and letting $J\left(d_{1},\ldots,d_{m};q,\chi,M\right)$
be the set of ordered pairs of eigenvalues of $\Fr_{q}$ acting on
$\left(K_{d_{1},\ldots,d_{m}}\right)_{\left(t^{d_{1}},\ldots,t^{d_{m}}\right)}$,
counted with signed multiplicities (the sign comes from the fact that
$\left(K_{d_{1},\ldots,d_{m}}\right)_{\left(t^{d_{1}},\ldots,t^{d_{m}}\right)}$
is a complex). 
\begin{notation}
As a consequence of Theorem \ref{thm:axioms}, we will use $a\left(f_{1},\ldots,f_{m};q,\chi,M\right)$
to denote both the axiomatic and geometric characterization of $a$-coefficients.
\end{notation}

\subsection{An explicit formula for a particular intermediate extension }
\begin{prop}
\label{prop:IC_is_sheaf}Let $X$ be a smooth variety, $D\hookrightarrow X$
a normal crossings divisor, $U=X\backslash D$, $j\colon U\subset X$
the open immersion, and $L$ a tame lisse sheaf on $U$ with finite
monodromy. 

Suppose we write $j$ as $j'\circ j''$, where $j''$ is the inclusion
of $U$ into the complement of the divisors for which $L$ has trivial
local monodromy, and $j'$ is the remaining inclusion into $X$. 

Then, $j''_{*}L$ is a lisse sheaf and 
\[
j_{!*}\left(L[d]\right)[-d]=j'_{!}j''_{*}L,
\]
where $d$ is the dimension of $X$. 
\begin{proof}
Let us first show that we actually have 
\[
j_{!*}\left(L[d]\right)[-d]=j_{*}L.
\]

Etale-locally, we may assume $D$ is of the form $t_{1}\cdots t_{s}=0$.
Let $X'$ be the closed subscheme in $X\times\A^{s}$ cut out by $\left(t'_{i}\right)^{N}=t_{i}$
for sufficiently large $N$. 

Consider the following Cartesian diagram:\[\begin{tikzcd} 	{U'} & {X'} \\ 	U & X 	\arrow["j"', hook, from=2-1, to=2-2] 	\arrow["{\pi'}"', from=1-1, to=2-1] 	\arrow["\pi", from=1-2, to=2-2] 	\arrow["{j'}", hook, from=1-1, to=1-2] \ar["\square", phantom, from=1-1, to=2-2] \end{tikzcd}\]Here,
$\pi$ is the projection onto $X,$ $j$ is the inclusion $U\subset X$,
and $\pi'$ and $j'$ are the respective pull-backs.

By Abhyankar's lemma (c.f. \cite{Grothendieck_Raynaud}), $\pi^{*}L$
is trivial, $\pi$ is finite, and $\pi'$ is etale. Moreover, the
monodromy representation of $\pi^{*}L$ factors through an abelian
product group $G$, which implies that the representation is the tensor
product of one-dimensional representations. 

Each of these one-dimensional representations can be realized as monodromy
representations associated to Kummer sheaves on $\G_{m}$ because
the direct factors of $G$ all look like groups of roots of unity
(more precisely inverse limits over $n$ prime to $p$ of $\mu_{n}$). 

Since intermediate extension and pushforwards commute with box products,
we can assume that $L$ is a lisse rank one sheaf on $\G_{m}$ that
has non-trivial local monodromy around the single closed point $\{p\}=\A^{1}\backslash\G_{m}$. 

Hence, by definition of the intermediate extension, we have $j_{!*}\left(L[1]\right)=j_{*}L[1]$,
from which it follows that 
\[
j_{!*}\left(L[d]\right)[-d]=j_{*}L=j'_{*}j''_{*}L.
\]

Next, let us show that the natural map $j'_{!}\left(j''_{*}L\right)\to j'_{*}\left(j''_{*}L\right)$
is an isomorphism. It suffices to do this on the level of stalks.

Recall that the local monodromy around an irreducible component is
obtained by taking a geometric generic point $\ol{\eta}$, and then
pulling $L$ back to $\Spec$ of the fraction field $K_{\ol{\eta}}^{\text{sh}}$
of the strictly Henselian local ring $\mathcal{O}_{X,\ol{\eta}}^{\text{sh}}$,
i.e. the strict localization $\widetilde{X}_{\ol{\eta}}$. 

By Tag 03Q7 of \cite{stacks}, the stalk at $p$ of $j_{*}L$ is given
by 
\begin{align*}
H^{0}\left(\widetilde{\A^{1}}_{\ol p}\times_{\A^{1}}\G_{m},L\right) & =H^{0}\left(\Spec K_{\ol p}^{\text{sh}},L\right)=\left(L_{\ol p}\right)^{G_{K_{\ol p}^{\text{sh}}}},
\end{align*}
where $G_{K_{\ol p}^{\text{sh}}}$ is the absolute Galois group of
$K_{\ol p}^{\text{sh}}.$ But this is zero for $\ol p$ such that
the local monodromy is non-trivial.

Finally, we show that $j''_{*}L$ is lisse. For the rest of the argument,
we may assume $j=j''$ by removing all divisors around which the local
monodromy is non-trivial. 

Suppose the divisors are $D_{1},,\ldots,D_{s}$. Let $g$ be the open
immersion $U\subset X\backslash\left\{ D_{2}\cup\cdots\cup D_{s}\right\} $.
Let us first show that $g_{*}L$ is lisse. Suppose $\ol{\eta}$ is
a geometric generic point of $D_{1}$. Since $g$ is an open immersion,
pulling $g_{*}L$ back to $\widetilde{X}_{\ol{\eta}}$ is the same
as pulling back $L$ to $\Spec K_{\ol{\eta}}^{\text{sh}}=\widetilde{X}_{\ol{\eta}}-\ol{\eta}$,
then pushing it forward to $\widetilde{X}_{\ol{\eta}}$. By assumption,
this is simply the pushforward of $\ol{\Q_{\ell}}$, so it follows
that $g_{*}L$ is lisse on a neighborhood of $\ol{\eta}$. Let $D_{1}'$
be the complement of the open locus where $g_{*}L$ is lisse. Since
$D_{1}'$ does not contain the generic point, it has codimension at
least two.

For the sake of contradiction, suppose $D_{1}'\ne\emptyset$. Then,
let $\ol{\eta'}$ be a geometric generic point of $D_{1}'$. Again,
pulling $g_{*}L$ back to $\widetilde{X}_{\ol{\eta'}}$ is the same
as pulling $L$ back to $\widetilde{X}_{\ol{\eta'}}\times_{X}U=\widetilde{X}_{\ol{\eta'}}-\ol{\eta'}$,
and then pushing forward to $\widetilde{X}_{\ol{\eta'}}$. By Grothendieck's
version of Zariski--Nagata purity (c.f. Tag 0BMA of \cite{stacks}),
we know $\pi_{1}\left(\widetilde{X}_{\ol{\eta'}}-\ol{\eta'}\right)=\pi_{1}\left(\widetilde{X}_{\ol{\eta'}}\right)=\pi_{1}\left(\ol{\eta'}\right)$,
which is trivial; indeed, $\widetilde{X}_{\ol{\eta'}}-\ol{\eta'}$
is an open subset of $\widetilde{X}_{\ol{\eta'}}$, so the natural
map $\pi_{1}\left(\widetilde{X}_{\ol{\eta'}}-\ol{\eta'}\right)\to\pi_{1}\left(\widetilde{X}_{\ol{\eta'}}\right)$
is surjective, and injectivity follows from the cited statement of
purity. Consequently, $g_{*}L$ is lisse in a neighborhood around
$\ol{\eta'}$, which is a contradiction.

Hence, $g_{*}L$ is lisse. We may successively apply the same argument
to the remaining divisors $D_{2},\ldots,D_{s}$ to conclude that $j_{*}L$
is lisse (because the local monodromy of $g_{*}L$ around each $D_{i}$
is also trivial).
\end{proof}
\end{prop}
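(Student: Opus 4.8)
\section*{Proof proposal}

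The plan is to establish the statement in three parts: that the intermediate extension $j_{!*}(L[d])[-d]$ coincides with the ordinary pushforward $j_*L$; that $j''_*L$ is a lisse sheaf; and that the canonical map $j'_!(j''_*L)\to j'_*(j''_*L)$ is an isomorphism. Since $j=j'\circ j''$ gives $j_*L=j'_*j''_*L$, combining the three yields the claimed formula. For the first part I would reduce to a local model: \'etale-locally on $X$ one may take $D=\{t_1\cdots t_s=0\}$, and since $L$ has finite tame monodromy, Abhyankar's lemma supplies a finite cover $\pi\colon X'\to X$ obtained by adjoining $N$-th roots $t_i=(t'_i)^N$ (for a suitable $N$ prime to $p$) with $\pi^*L$ trivial, $\pi$ finite, and $\pi$ \'etale over $U$; the monodromy of $L$ then factors through the finite abelian group $\prod_i\mu_N$. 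As coefficients lie in a field of characteristic zero, $L$ decomposes into a direct sum of rank-one sheaves, each an external tensor product of Kummer sheaves pulled back along the coordinate functions. Both $j_{!*}$ (up to the shift) and $j_*$ commute with external products, so the identity reduces to the case $X=\A^1$, $U=\G_m$, $L=\mathcal{L}_\chi$, where on a smooth curve the intermediate extension of $\mathcal{F}[1]$ equals $(j_*\mathcal{F})[1]$ for any lisse $\mathcal{F}$ on the open part --- concretely, if $\chi$ is nontrivial then $j_!\mathcal{L}_\chi=j_*\mathcal{L}_\chi$, and if $\chi$ is trivial then $\mathcal{L}_\chi$ already extends to a lisse sheaf on $\A^1$. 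I expect this box-product reduction to be the main obstacle, since it requires verifying both that the local decomposition into Kummer sheaves is genuinely available and that the intermediate extension and $j_*$ are compatible with external products.

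For the second part, after deleting from $X$ the components of $D$ around which $L$ has nontrivial local monodromy, I may assume $j''=j$ and that $L$ has trivial local monodromy around every component $D_1,\dots,D_s$ of $D$. I would then extend across the components one at a time: letting $g$ be the inclusion of $U$ into $X\setminus(D_2\cup\dots\cup D_s)$, pulling $g_*L$ back to the strict localization at a geometric generic point of $D_1$ amounts to pushing $L$ forward from the punctured strict henselization, which by triviality of the monodromy is the pushforward of a constant sheaf; hence $g_*L$ is lisse near the generic point of $D_1$, so the locus $D_1'$ where it fails to be lisse has codimension at least two in $X$. If $D_1'\neq\emptyset$, picking a geometric generic point $\overline{\eta'}$ of $D_1'$, Grothendieck's form of Zariski--Nagata purity gives $\pi_1(\widetilde X_{\overline{\eta'}}\setminus\overline{\eta'})=\pi_1(\widetilde X_{\overline{\eta'}})=1$ (surjectivity is automatic for an open immersion, injectivity is purity), so $g_*L$ would be lisse near $\overline{\eta'}$ --- a contradiction. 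Thus $g_*L$ is lisse on $X\setminus(D_2\cup\dots\cup D_s)$, and since its local monodromy around each remaining $D_i$ is still trivial, iterating shows $j_*L=j''_*L$ is lisse.

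For the third part I would verify that $j'_!(j''_*L)\to j'_*(j''_*L)$ is an isomorphism on stalks. Over the open subscheme that is the target of $j''$ both functors restrict to the identity, so it remains to treat a point $p$ lying on a component of $D$ around which $L$ has nontrivial local monodromy. Using $j'_*j''_*L=j_*L$ and the standard description of $(j_*L)_p$ as the global sections of $L$ pulled back to the punctured strict henselization at $p$ --- i.e.\ the invariants of $L$ under the local tame fundamental group at $p$ --- this stalk vanishes, since the local monodromy around the relevant component through $p$ has no invariants (reducing once more to a rank-one Kummer sheaf, where nontriviality of $\chi$ gives $\mathcal{L}_\chi^{I}=0$). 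Hence $(j'_*j''_*L)_p=0=(j'_!j''_*L)_p$, so the two sheaves agree. Assembling the parts, $j_{!*}(L[d])[-d]=j_*L=j'_*j''_*L=j'_!(j''_*L)$ with $j''_*L$ lisse, which is the assertion.
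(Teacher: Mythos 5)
Your proposal is correct and follows essentially the same three-step structure as the paper's proof: the Abhyankar-lemma reduction to an external product of Kummer sheaves to identify $j_{!*}(L[d])[-d]$ with $j_*L$, the Zariski--Nagata purity argument to show $j''_*L$ is lisse by induction over the divisors, and the vanishing of stalks via triviality of local monodromy invariants to identify $j'_!(j''_*L)$ with $j'_*(j''_*L)$. The only differences are cosmetic (a reordering of steps, and an explicitly spelled-out decomposition of $L$ into rank-one summands), so there is nothing substantive to add.
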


\subsection{Elementary combinatorial lemmas on trees}

Recall that a rooted tree is simply a tree (an undirected graph that
is connected and has no cycles) in which one vertex is designated
to be the root. In a rooted tree, a parent of a vertex $v$ is the
unique vertex adjacent to $v$ that is on the path to the root; reciprocally,
a child of a vertex $v$ is a vertex for which $v$ is its unique
parent. The root, in particular, has no parent. A leaf is a vertex
with no children. A rooted planar tree is a rooted tree with an embedding
in the plane with the root at the top and the children of each vertex
$v$ lower than $v$.

Fix $r\ge1$ and non-negative integers $d_{1},\ldots,d_{m}$ such
that $d_{1}+\cdots+d_{m}=r$. Let $S_{r}$ denote the symmetric group
on a set of size $r$ and $S_{d_{1}}\times\cdots\times S_{d_{m}}$
be a Young subgroup.

Let $\mathcal{T}$ denote the set of rooted trees with exactly $r$
leaves, up to $S_{d_{1}}\times\cdots\times S_{d_{m}}$-action, such
that each non-leaf vertex aside from the root has valence at least
three. The leaves are labeled 1 to $r$, and the action is given by
$S_{d_{1}}$ acting on the leaves $1$ through $d_{1}$, $S_{d_{2}}$
acting on the leaves labeled $d_{1}+1$ through $d_{2}+d_{1}$, and
so on. 

For such a tree $T$ and a non-leaf vertex $v$, consider the $d(v)$
(i.e. the valency) neighbors of $v$, including the leaves. If we
remove the edge connecting $v$ to a neighbor $w$, then the connected
component containing $w$ can be viewed as a rooted tree $T_{w}$
with some number of labels. Each such neighbor defines a new rooted
sub-tree. Let $\lambda(v)$ be a partition of $d(v)$ and $S_{\lambda(v)}$
be the product of symmetric groups $S_{\lambda(v)_{1}}\times S_{\lambda(v)_{2}}\times\cdots$
defined as follows:

Consider the trees $T_{w}$ up to isomorphism (for a given $v$),
remembering which tree contains the original root (if $v\ne\text{root}$),
under the same action. Then, the partition $\lambda(v)$ is a collection
of numbers, where each number is the number of $w$ adjacent to $v$
such that $T_{w}$ lies in a particular isomorphism class.
\begin{example}
\label{exa:graphexample}The number of leaves adjacent to $v$ appears
in the partition, and if $v\ne\text{root}$, one of the summands of
the partition $\lambda(v)$ is simply 1, corresponding to the tree
$T_{w}$ which contains the original root. 
\end{example}

\begin{lem}
\label{lem:graph_theory-1}Suppose $T\in\mathcal{T}$. 
\begin{enumerate}
\item There are at most $r$ non-leaf vertices. 
\item The sum of the valences over all non-leaf vertices is at most $3r-2$. 
\item The sum of the valences over all non-leaf vertices minus the number
of non-leaf, non-root vertices is at most $2r-1$. 
\item The valency of any vertex is at most $r+1$.
\end{enumerate}
\begin{proof}
Suppose there are $s$ non-leaf, non-root vertices, whose valencies
are $d\left(v_{1}\right),\ldots,d\left(v_{s}\right)$. Then, since
the sum of all valencies (including those of the leaves) is twice
the number of edges and because a tree has one fewer edge than the
total number of vertices, we obtain 
\[
d\left(v_{1}\right)+\cdots+d\left(v_{s}\right)+d\left(\text{root}\right)+\underset{\text{contribution from the leaves}}{\underbrace{r}}=2\left(r+s+1-1\right),
\]
which means 
\[
d\left(v_{1}\right)+\cdots+d\left(v_{s}\right)+d\left(\text{root}\right)=r+2s.
\]
By assumption, the left-hand side is at least $3s+1$, which means
$s\le r-1$, i.e. there are at most $r$ non-leaf vertices. This also
implies $d\left(v_{1}\right)+\cdots+d\left(v_{s}\right)+d\left(\text{root}\right)\le3r-2$
and $d\left(v_{1}\right)+\cdots+d\left(v_{s}\right)+d\left(\text{root}\right)-s=r+s\le2r-1$.

Finally, if a vertex has more than $r+1$ neighbors, then it has more
than $r$ children, which means that $T$ has more than $r$ leaves.
\end{proof}
\end{lem}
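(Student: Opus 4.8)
The plan is to squeeze everything out of one arithmetic identity coming from the handshake lemma and the edge count of a tree, and then handle (iv) by a short leaf-counting argument.

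First I would set $s$ to be the number of non-leaf, non-root vertices, with valencies $d(v_1),\dots,d(v_s)$. Since $T$ is a tree it has one fewer edge than vertex, and it has $r+s+1$ vertices in all (the $r$ leaves, the $s$ non-leaf non-root vertices, and the root), hence $r+s$ edges. The sum of all valencies is $2(r+s)$; subtracting off the contribution $r$ of the leaves leaves $d(\mathrm{root})+\sum_{i=1}^{s}d(v_i)=r+2s$ as the total valence of the non-leaf vertices. The standing hypothesis gives $\sum_{i=1}^{s}d(v_i)\ge 3s$ and $d(\mathrm{root})\ge 1$, so $3s+1\le r+2s$, i.e. $s\le r-1$. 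Parts (i)--(iii) then drop out immediately: the number of non-leaf vertices is $s+1\le r$; the quantity in (ii) is $r+2s\le 3r-2$; and the quantity in (iii) is $(r+2s)-s=r+s\le 2r-1$.

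For (iv) I would argue by contradiction. Suppose a non-leaf vertex $v$ has valence at least $r+2$, and let $\ell\le r$ be the number of leaves adjacent to $v$. At most one edge at $v$ points toward the root (none if $v$ is itself the root), so at least $r+2-\ell-1=r+1-\ell\ge 1$ of the neighbors $w$ of $v$ are non-leaf vertices whose subtree $T_w$ avoids the root. Each such $w$ is then a non-leaf, non-root vertex, hence has valence $\ge 3$, hence has at least two children, so $T_w$ contains at least two leaves. Counting leaves, the $\ell$ leaves at $v$ together with the $\ge 2$ leaves in each of the $\ge r+1-\ell$ subtrees $T_w$ give $\ell+2(r+1-\ell)\le r$, which forces $\ell\ge r+2$ and contradicts $\ell\le r$. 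If $v$ is the root, the same count has one additional subtree $T_w$ and the contradiction is only stronger.

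I do not anticipate a real obstacle: the statement is pure combinatorial bookkeeping built on the handshake lemma. The one place to be careful is (iv), where one must correctly discount the single edge at $v$ leading toward the root and confirm that every remaining relevant neighbor $w$ is genuinely a non-root, non-leaf vertex, so that the valence-three hypothesis legitimately applies to $w$ and produces two leaves in $T_w$.
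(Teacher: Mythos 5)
Your proof is correct and takes essentially the same route as the paper's: the handshake-plus-edge-count identity $d(\mathrm{root})+\sum d(v_i)=r+2s$ combined with the valence-$\ge 3$ hypothesis gives (i)--(iii), and the same leaf-counting contradiction gives (iv). The only difference is that you spell out the root/non-root case split in (iv) a bit more explicitly, which is a minor clarification rather than a different argument.
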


\begin{lem}
\label{lem:planartrees-1}Let $T\in\mathcal{T}$. There are ${d(\text{root}) \choose \lambda(\text{root})}\prod_{v\ne\text{root}}{d(v)-1 \choose \lambda(v)\backslash\{1\}}$
distinct rooted planar trees isomorphic to $T$. 
\begin{proof}
We prove this inductively. The base case of one vertex is trivial.
Starting at the root, say with neighbors $w$, there are ${d(\text{root}) \choose \lambda(\text{root})}$
ways to rearrange the different $T_{w}$ (recall that $T$ and $T_{w}$
remember the information about the labeling of leaves up to $S_{d_{1}}\times\cdots\times S_{d_{m}}$-action).
Then, recursively, within each $T_{w}$, by induction, there are ${d(w) \choose \lambda(w)}\prod_{v}{d(v)-1 \choose \lambda(v)\backslash\{1\}}$
distinct rooted planar trees isomorphic to $T_{w}$, where $v$ runs
over all children of $w$ (in particular, $T_{w}$ does not contain
the original root). Taking the product over all $w$, the result follows.
\end{proof}
\end{lem}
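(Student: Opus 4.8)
The plan is to prove the formula by induction on the number of vertices of $T$ (equivalently on $r$), unwinding the recursion built into the definition of the partitions $\lambda(v)$; the base case of a single vertex is immediate, since there is exactly one rooted planar structure and the right-hand side is an empty product equal to $1$. For the inductive step I would first give a clean bijective description of the rooted planar trees isomorphic to $T$. The root of $T$ has $d(\text{root})$ children, and grouping the corresponding rooted subtrees into isomorphism classes — respecting the leaf labels up to the Young subgroup action, exactly as in the definition of $\lambda$ — produces classes with multiplicities $\lambda(\text{root})_1,\lambda(\text{root})_2,\dots$ summing to $d(\text{root})$. A rooted planar tree $P$ isomorphic to $T$ is then the same data as: (a) a word of length $d(\text{root})$ recording, from left to right, the isomorphism class of the subtree placed in each position, using each class exactly as many times as its multiplicity; together with (b) for each position, a choice of actual rooted planar tree in the prescribed isomorphism class. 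Two such pieces of data give the same $P$ precisely when they agree position by position, because in a rooted planar tree the children of a vertex carry a genuine left-to-right order with no residual symmetry.

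Granting this description, the number of choices in (a) is the multinomial $\binom{d(\text{root})}{\lambda(\text{root})}$, and the choices in (b) multiply over the $d(\text{root})$ positions. It remains to identify the contribution of (b) with $\prod_{v\ne\text{root}}\binom{d(v)-1}{\lambda(v)\backslash\{1\}}$. For each child $w$ of the root, the subtree $T_w$ is a rooted tree on strictly fewer vertices, so the inductive hypothesis applies to it. Here I would observe that removing the edge from a vertex to its parent, and remembering which component contains the original root, is the same local operation whether performed inside $T_w$ or inside $T$; hence $\lambda_{T_w}(v)=\lambda_T(v)$ for every vertex of $T_w$ other than its own root $w$, while $\lambda_{T_w}(w)\backslash\{1\}$ is exactly the list of children-subtree multiplicities of $w$ in $T$, i.e. $\lambda_T(w)\backslash\{1\}$ — the removed part $1$ being, in both cases, the one recording the direction toward the original root, which is what makes $\backslash\{1\}$ unambiguous even when $w$ also happens to have a single leaf child. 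Multiplying the per-child counts from (b) and combining with the multinomial from (a) yields exactly $\binom{d(\text{root})}{\lambda(\text{root})}\prod_{v\ne\text{root}}\binom{d(v)-1}{\lambda(v)\backslash\{1\}}$, with leaves contributing trivial factors, completing the induction.

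The one point that genuinely needs care — the expected main obstacle — is the ``no double counting'' assertion hidden in (a) and (b): one must verify that two different assignments of which position receives which isomorphism type, or two different planar structures on a pair of abstractly-isomorphic subtrees, never yield planar-isomorphic trees. This is precisely what forces the root to contribute the full multinomial $\binom{d(\text{root})}{\lambda(\text{root})}$ rather than a coarser count, and it relies on the fact that a rooted planar tree carries a strict linear order (not merely a cyclic order, and not an order up to reflection) on the children of each vertex. Once this is pinned down, the rest is the routine induction sketched above.
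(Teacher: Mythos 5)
Your proof is correct and follows essentially the same inductive decomposition at the root that the paper uses; the main difference is that you carefully spell out why the root of each child subtree $T_w$ contributes $\binom{d(w)-1}{\lambda(w)\backslash\{1\}}$ rather than $\binom{d(w)}{\lambda(w)}$ (by identifying $d_{T_w}(w)=d_T(w)-1$ and $\lambda_{T_w}(w)=\lambda_T(w)\backslash\{1\}$), and you make explicit the "no double counting" step coming from the strict linear order on children in a planar tree, both of which the paper leaves implicit. No substantive gap.
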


\begin{lem}
\label{lem:numberplanartree-1}There are $O_{m}\left(\left(16m\right)^{r}\right)$
distinct rooted planar trees isomorphic to some tree in $\mathcal{T}$. 
\begin{proof}
Let $\mathcal{T}'$ be the same definition as $\mathcal{T}$, except
with $S_{r}$-action replacing $S_{d_{1}}\times\cdots\times S_{d_{m}}$-action.
Then, note that the number of trees in $\mathcal{T}$ is at most ${r \choose d_{1},\ldots,d_{m}}$
times the number of trees in $\mathcal{T}'$. 

We can think of any tree in $\mathcal{T}'$ as a rooted unlabelled
tree with at least $r+1$ vertices and at most $r+r=2r$ vertices
by Lemma \ref{lem:graph_theory-1}. 

By \cite{Stanley}, the number of rooted planar unlabelled trees with
$r'+1$ vertices is simply the $r'$-th Catalan number $C_{r'}=\frac{{2r' \choose r'}}{r'+1}$. 

Then, the number of distinct rooted planar trees isomorphic to a tree
in $\mathcal{T}$ is at most 
\[
{r \choose d_{1},\ldots,d_{m}}\left(C_{r}+\cdots+C_{2r-1}\right)\le\frac{r!}{\left(\left(r/m\right)!\right)^{m}}4^{2r-1}.
\]
By a variant of Stirling's approximation, we have 
\begin{align*}
{r \choose d_{1},\ldots,d_{m}}\left(C_{r}+\cdots+C_{2r}\right) & \ll_{m}\frac{\sqrt{2\pi r}\left(\frac{r}{e}\right)^{r}e^{\frac{1}{12r}}}{\left(\sqrt{2\pi\frac{r}{m}}\left(\frac{r/m}{e}\right)^{r/m}e^{\frac{1}{12\frac{r}{m}+1}}\right)^{m}}16^{r}\\
 & \ll_{m}\frac{m^{m/2}e^{\frac{1}{12r}-\frac{m^{2}}{12r+m}}}{\left(\sqrt{2\pi r}\right)^{m-1}}\left(16m\right)^{r}\\
 & \ll_{m}\left(16m\right)^{r}.
\end{align*}
\end{proof}
\end{lem}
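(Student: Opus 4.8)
The plan is to reduce the count to one of unlabelled plane trees, bound the number of vertices using the combinatorial lemmas, and then estimate with Catalan numbers — essentially the single-variable bookkeeping, but carrying the Young subgroup along via its index.

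First I would pass from the Young subgroup to the full symmetric group. Let $\mathcal{T}'$ be defined exactly like $\mathcal{T}$ but with $S_r$ acting on all $r$ leaves instead of $S_{d_1}\times\cdots\times S_{d_m}$. I would invoke the elementary fact that if a finite group $G$ acts on a set and $H\le G$, then every $G$-orbit splits into at most $[G:H]$ orbits under $H$, so the number of $H$-orbits is at most $[G:H]$ times the number of $G$-orbits. Applying this with $G=S_r$, $H=S_{d_1}\times\cdots\times S_{d_m}$, acting on the set of leaf-labelled rooted plane trees whose non-leaf, non-root vertices all have valency at least three, I get that the number of distinct rooted plane trees isomorphic to a tree in $\mathcal{T}$ is at most $\binom{r}{d_1,\ldots,d_m}$ times the corresponding count for $\mathcal{T}'$. (Equivalently, one can sum the formula of Lemma~\ref{lem:planartrees-1} over $\mathcal{T}$ and over $\mathcal{T}'$ and compare termwise.)

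Next I would identify the $\mathcal{T}'$-count with a count of plane unlabelled trees. Since $S_r$ acts transitively on leaf-labellings, a rooted plane tree considered up to $S_r$ is just a rooted plane unlabelled tree with $r$ leaves, and the valency condition is intrinsic to the underlying abstract tree; by Lemma~\ref{lem:graph_theory-1}(i) such a tree has at most $r$ non-leaf vertices, hence between $1$ and $2r$ vertices total. So the $\mathcal{T}'$-count is at most the number of rooted plane unlabelled trees on at most $2r$ vertices, which by Stanley's enumeration of plane trees (there are $C_k=\binom{2k}{k}/(k+1)$ of them on $k+1$ vertices) is at most $\sum_{k=0}^{2r-1}C_k$. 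From $C_k/C_{k-1}=(4k-2)/(k+1)\ge 2$ for $k\ge 2$, a geometric-series bound gives $\sum_{k=0}^{2r-1}C_k\le 2C_{2r-1}$, and since $C_{2r-1}=\frac{1}{2r}\binom{4r-2}{2r-1}\le\frac{4^{2r-1}}{2r}$, the sum is at most $4^{2r-1}$ for $r\ge1$. Finally, using the crude bound $\binom{r}{d_1,\ldots,d_m}\le m^r$ (it is one term of $(1+\cdots+1)^r=m^r$), the total is at most $m^r\cdot 4^{2r-1}=\tfrac14(16m)^r=O_m\!\left((16m)^r\right)$. (If one wants the $m$-dependence confined to lower-order terms, replace $m^r$ by the sharper $\binom{r}{d_1,\ldots,d_m}\le r!/((r/m)!)^m$ and apply Stirling, which trades the constant $\tfrac14$ for a factor $\asymp m^{m/2}/(2\pi r)^{(m-1)/2}$; the case $r=1$ is trivial in any event.)

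The main obstacle is making the reduction in the first step airtight: one must check that ``distinct rooted plane trees isomorphic to some tree in $\mathcal{T}$'' is precisely the set of $S_{d_1}\times\cdots\times S_{d_m}$-orbits of leaf-labelled plane trees satisfying the valency condition, so that the orbit-counting inequality applies with the index $\binom{r}{d_1,\ldots,d_m}$ — concretely, that summing Lemma~\ref{lem:planartrees-1} over $\mathcal{T}$ and over $\mathcal{T}'$ differ by exactly this factor. Everything after that is routine: the vertex bound is Lemma~\ref{lem:graph_theory-1}(i), the plane-tree count is the standard Catalan interpretation, and the final estimate is an elementary ratio computation (with an optional Stirling refinement).
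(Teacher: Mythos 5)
Your proof is correct and follows essentially the same route as the paper's: pass to the $S_r$-quotient via the index $\binom{r}{d_1,\ldots,d_m}$, bound the count of unlabelled rooted plane trees by Catalan numbers on at most $2r$ vertices using Lemma~\ref{lem:graph_theory-1}, and multiply. The only deviation is the final step, where you use the crude bound $\binom{r}{d_1,\ldots,d_m}\le m^r$ in place of the paper's Stirling estimate on $r!/((r/m)!)^m$; both yield $O_m\left(\left(16m\right)^r\right)$, and yours is a touch more elementary.
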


\subsection{\label{subsec:generalbound}Bounds for the cohomology of lisse sheaves
on the moduli space of stable maps}

Fix a non-negative integer $r.$ Let us quickly recall the definition
of the Kontsevich moduli stack $\ol{\mathcal{M}}_{0,r}\left(\P^{1},1\right)$
and some related objects.

In this subsection, we assume everything is over $\C.$ 

An $r$-pointed pre-stable curve $\left(C,p_{1},\ldots,p_{r}\right)$
of arithmetic genus zero is a reduced connected projective variety
$C$ of dimension one and genus zero with at worst nodal singularities
and smooth points $p_{1},\ldots,p_{r}\in C$. A point on a pre-stable
curve is special if it is a node or one of the $p_{i}$'s. A stable
curve $\left(C,p_{1},\ldots,p_{r}\right)$ of genus zero is a pre-stable
curve with at least three special points. A stable map $\left(C,p_{1},\ldots,p_{r}\right)\to\P^{1}$
of degree one is a morphism from an $r$-pointed pre-stable curve
such that exactly one of the irreducible components of $C$ maps isomorphically
onto $\P^{1}$, the rest of the components are contracted, and each
contracted component has at least three special points. 

Then, the following are Deligne--Mumford moduli stacks:
\begin{enumerate}
\item $\mathcal{M}_{0,r}$ parameterizes $r$-pointed smooth stable curves
of genus zero. This space has dimension $r-3$.
\item $\mathcal{\ol M}_{0,r}$ parameterizes $r$-pointed stable curves
of genus zero. This space has dimension $r-3$.
\item $\mathcal{M}_{0,r}\left(\P^{1},1\right)$ parameterizes stable maps
$\left(C,p_{1},\ldots,p_{r}\right)\to\P^{1}$ of degree one with $C$
smooth. This space has dimension $r$.
\item $\ol{\mathcal{M}}_{0,r}\left(\P^{1},1\right)$ parameterizes stable
maps $\left(C,p_{1},\ldots,p_{r}\right)\to\P^{1}$ of degree one.
This space has dimension $r$.
\end{enumerate}
There is a canonical evaluation map $\ev\colon\mathcal{\ol M}_{0,r}\left(\P^{1},1\right)\to\left(\P^{1}\right)^{r}$
(c.f. \cite{Kock_Vainsencher}) that sends a stable map to the images
of its marked points. Let 
\[
\ol{\mathcal{M}}_{0,r}\left(\P^{1},1\right)_{\A^{r}}
\]
denote the preimage of $\A^{r}$ under $\ev$. 

Let $\left[-/S_{d_{1}}\times\cdots\times S_{d_{m}}\right]$ denote
the stack quotient. We freely use the notation of the previous subsection
on trees. For a tree $T\in\mathcal{T}$, let the stabilizer of $T$
under the action of $S_{d_{1}}\times\cdots\times S_{d_{m}}$ be denoted
by $\stab(T)$. 

Note that genus zero pre-stable and stable curves should be thought
of as collections of $\P^{1}$'s glued together at nodes without loops---this
leads to a stratification related to trees. 

\begin{lem}
\label{lem:stratification}$\left[\ol{\mathcal{M}}_{0,r}\left(\P^{1},1\right)_{\A^{r}}/S_{d_{1}}\times\cdots\times S_{d_{m}}\right]$
exhibits a stratification parametrized by trees in $\mathcal{T}$,
i.e. of the form 
\[
\mathcal{M}_{T}\coloneqq\left[\left(\left(\prod_{v\ne\text{root}}\mathcal{M}_{0,d(v)}\right)\times\mathcal{\mathcal{M}}_{0,d(\text{root})}\left(\P^{1},1\right)_{\A^{d(\text{root})}}\right)/\stab(T)\right]
\]
with $T\in\mathcal{T}$ and $v$ runs over all non-leaf and non-root
vertices.
\begin{proof}
Recall that the space $\ol{\mathcal{M}}_{0,r}\left(\P^{1},1\right)$
admits a stratification corresponding to rooted trees with exactly
$r$ leaves such that each non-leaf vertex aside from the root has
valence at least three, i.e. of the form 
\[
\left(\prod_{v\ne\text{root}}\mathcal{M}_{0,d(v)}\right)\times\mathcal{\mathcal{M}}_{0,d(\text{root})}\left(\P^{1},1\right).
\]
Then, $\ol{\mathcal{M}}_{0,r}\left(\P^{1},1\right)_{\A^{r}}$ admits
a stratification of the form 
\[
\left(\prod_{v\ne\text{root}}\mathcal{M}_{0,d(v)}\right)\times\mathcal{\mathcal{M}}_{0,d(\text{root})}\left(\P^{1},1\right)_{\A^{d(\text{root})}},
\]
from which the result follows.
\end{proof}
\end{lem}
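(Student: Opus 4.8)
The plan is to leverage the well-known stratification of $\ol{\mathcal{M}}_{0,r}\left(\P^{1},1\right)$ by combinatorial type of the stable map, and then track how the operations ``restrict to $\A^r$'' and ``quotient by the Young subgroup'' interact with it. First I would recall in detail the dual-graph stratification: a point of $\ol{\mathcal{M}}_{0,r}\left(\P^{1},1\right)$ is a stable map whose source curve is a tree of $\P^1$'s, with exactly one component mapping isomorphically to the target and the remaining components contracted; attaching the contracted components to the distinguished one (and remembering the marked points) yields precisely a rooted tree with $r$ leaves in which every non-root, non-leaf vertex has valence $\ge 3$ (the stability condition), i.e. an element of $\mathcal{T}'$ in the notation of the previous subsection. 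The locally closed stratum of fixed type is the product $\left(\prod_{v\ne\text{root}}\mathcal{M}_{0,d(v)}\right)\times\mathcal{M}_{0,d(\text{root})}\left(\P^{1},1\right)$, where the last factor records the modulus of the distinguished component together with its special points, and each $\mathcal{M}_{0,d(v)}$ records the modulus of a contracted component. This is standard and I would cite \cite{Kock_Vainsencher}; the content of Lemma \ref{lem:stratification} is just the bookkeeping on top of it.

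Next I would take the preimage under $\ev$ of $\A^r\subset\left(\P^1\right)^r$. Since $\ev$ sends a stable map to the images of its marked points under the map to $\P^1$, and those images are read off only from the position of the leaves on the \emph{distinguished} component (a point on a contracted component maps to wherever that component is attached, which ultimately lies on the distinguished $\P^1$), intersecting with $\A^r$ imposes a condition purely on the distinguished-component factor: it replaces $\mathcal{M}_{0,d(\text{root})}\left(\P^1,1\right)$ by $\mathcal{M}_{0,d(\text{root})}\left(\P^1,1\right)_{\A^{d(\text{root})}}$, the locus where the images of that component's special points avoid $\infty$. The other factors are untouched because those moduli parametrize only the abstract pointed curves of the contracted components. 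Hence $\ol{\mathcal{M}}_{0,r}\left(\P^1,1\right)_{\A^r}$ inherits a stratification indexed by the same trees, with typical stratum $\left(\prod_{v\ne\text{root}}\mathcal{M}_{0,d(v)}\right)\times\mathcal{M}_{0,d(\text{root})}\left(\P^1,1\right)_{\A^{d(\text{root})}}$ — which is exactly the parenthesized space in the statement.

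Finally I would pass to the stack quotient by $S_{d_1}\times\cdots\times S_{d_m}$. The Young subgroup permutes the $r$ marked points within the blocks, hence permutes the leaf labels of the dual trees; its orbits on $\mathcal{T}'$ are by definition the elements of $\mathcal{T}$. Quotienting a stratified stack by a finite group gives a stratification of the quotient indexed by orbits of strata, with the stratum attached to an orbit being $\left[\coprod_{T'\in\text{orbit}}(\text{stratum of }T')/(S_{d_1}\times\cdots\times S_{d_m})\right]$, which by the orbit–stabilizer identification is $\left[(\text{stratum of }T)/\stab(T)\right]$ for any chosen representative $T$ of the orbit. Plugging in the stratum computed in the previous step yields exactly $\mathcal{M}_T$ as defined. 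The only points requiring care — and the closest thing to an obstacle — are checking that the $\stab(T)$-action on the product of moduli factors is the natural permutation action implicit in the notation (it permutes the factors $\mathcal{M}_{0,d(v)}$ attached to isomorphic branches, and permutes marked points within each $\mathcal{M}_{0,d(v)}$ and within $\mathcal{M}_{0,d(\text{root})}\left(\P^1,1\right)_{\A^{d(\text{root})}}$), and that taking $\ev^{-1}(\A^r)$ genuinely commutes with the group quotient — both of which are immediate once one unwinds that $\A^r$ is $S_{d_1}\times\cdots\times S_{d_m}$-stable inside $(\P^1)^r$. I would therefore present the argument as ``recall the standard stratification, observe the $\A^r$-condition only affects the root factor, pass to orbits under the Young subgroup,'' with the bulk of the write-up being the unwinding of notation rather than any substantive geometry.
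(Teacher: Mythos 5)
Your proposal follows essentially the same route as the paper: recall the dual-graph stratification of $\ol{\mathcal{M}}_{0,r}\left(\P^{1},1\right)$, observe that intersecting with $\ev^{-1}(\A^r)$ only modifies the root factor, and then pass to the quotient by the Young subgroup. The paper's own proof is quite terse and leaves the last step (identifying the orbit of strata with $\left[\left(\text{stratum}\right)/\stab(T)\right]$ via orbit--stabilizer) entirely to the reader with the phrase ``from which the result follows,'' whereas you spell it out explicitly, along with the sanity check that the $\A^r$-condition commutes with the group action; those additions are correct and fill in precisely what the paper elides.
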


\begin{lem}
\label{lem:fibration}Let $f\colon X\to Y$ be a morphism of smooth
Deligne--Mumford stacks over a characteristic zero field that is
etale-locally on $Y$ isomorphic to a projection map $F\times Y'\to Y'$.
Then, if $\mathcal{L}$ is a lisse sheaf on $X$, the derived pushforward
of $\mathcal{L}$, up to shift, is a lisse sheaf. 
\begin{proof}
Lisse-ness of $Rf_{*}\mathcal{L}$ is etale-local on $Y$, and since
$f$ is etale-locally a projection of the form $F\times Y\to Y$,
we may assume $X=F\times Y$. Moreover, in characteristic zero, since
$\pi_{1}\left(X\times Y\right)\cong\pi_{1}\left(X\right)\times\pi_{1}\left(Y\right)$,
any lisse sheaf on $F\times Y$ can be expressed as the box product
of a lisse sheaf on $F$ and a lisse sheaf on $Y$. Then, working
etale-locally on $Y$ again, we may assume $\mathcal{L}$ is the pull
back of a lisse sheaf from $F$. 

By proper base change (c.f. Tag 095S of \cite{stacks}), $Rf_{!}\mathcal{L}$
is isomorphic to the pushforward of a lisse sheaf on $F$ to a point,
then pull-backed to $Y$. In other words, $Rf_{!}\mathcal{L}$ is
lisse. 

If $D$ denotes the Verdier duality functor, then $Rf_{*}\mathcal{L}\cong D\left(Rf_{!}\left(D\left(\mathcal{L}\right)\right)\right)$.
Since $X$ is smooth, up to a shift, $D\left(\mathcal{L}\right)$
is lisse. Similarly, since $Y$ is smooth and $Rf_{!}\left(D\left(\mathcal{L}\right)\right)$
is lisse by above, it follows that $D\left(Rf_{!}\left(D\left(\mathcal{L}\right)\right)\right)$
is lisse, up to a shift.
\end{proof}
\end{lem}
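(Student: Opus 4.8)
Lisseness up to shift of $Rf_{*}\mathcal{L}$ is etale-local on $Y$, and $Rf_{*}$ commutes with etale base change on $Y$; since $f$ is by hypothesis etale-locally on $Y$ a projection, I may therefore assume $f$ is the projection $p\colon F\times Y\to Y$, and, passing to connected components, that $F$ and $Y$ are connected. The model case is a box product $\mathcal{L}=\mathcal{M}\boxtimes\mathcal{N}$: the projection formula (valid since the lisse sheaf $\mathcal{N}$ is dualizable) gives $Rp_{*}(\mathcal{M}\boxtimes\mathcal{N})\cong Rp_{*}(\mathrm{pr}_{F}^{*}\mathcal{M})\otimes\mathcal{N}$; smooth base change along the smooth structure morphism $Y\to\Spec k$ identifies $Rp_{*}(\mathrm{pr}_{F}^{*}\mathcal{M})$ with the pullback to $Y$ of $R\Gamma(F,\mathcal{M})$; and by finiteness of etale cohomology of the finite-type stack $F$ this is a bounded complex of finite-dimensional $\ol{\Q_{\ell}}$-vector spaces, whose pullback, tensored with the lisse sheaf $\mathcal{N}$, is lisse up to shift on $Y$. (Alternatively one may route this through $Rp_{!}$, using the unconditional Künneth isomorphism $Rp_{!}(\mathcal{M}\boxtimes\mathcal{N})=R\Gamma_{c}(F,\mathcal{M})\otimes\mathcal{N}$ and then Verdier duality, both $X$ and $Y$ being smooth.)

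To go from box products to a general lisse $\mathcal{L}$ on $F\times Y$, I would use that in characteristic zero $\pi_{1}(F\times Y)\cong\pi_{1}(F)\times\pi_{1}(Y)$ (after base-changing to $\ol{k}$, which is harmless for checking lisseness), so $\mathcal{L}$ corresponds to a continuous $\ol{\Q_{\ell}}$-representation of $\pi_{1}(F)\times\pi_{1}(Y)$. In a composition series of this finite-dimensional representation every simple subquotient is a box product $\tau\boxtimes\sigma$ of irreducibles --- an irreducible continuous $\ol{\Q_{\ell}}$-representation of a product of profinite groups is a tensor product of irreducibles, by a Clifford-type argument (the $\pi_{1}(F)$-socle of a simple $\pi_{1}(F)\times\pi_{1}(Y)$-module is a submodule, so the restriction to $\pi_{1}(F)$ is semisimple and $\tau$-isotypic, with $\pi_{1}(Y)$ acting through the multiplicity space). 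Hence $\mathcal{L}$ is an iterated extension of box products of lisse sheaves. Because maps of lisse $\ol{\Q_{\ell}}$-sheaves are etale-locally constant their kernels and cokernels are lisse, and an extension of lisse by lisse has all specialization maps isomorphic (five lemma) hence is lisse on the normal stack $F\times Y$; so the bounded complexes with lisse cohomology sheaves form a triangulated subcategory of $D^{b}_{c}$. Applying the triangulated functor $Rp_{*}$ to the filtration of $\mathcal{L}$ and invoking the model case on the graded pieces then shows $Rp_{*}\mathcal{L}$ has lisse cohomology, i.e. is lisse up to shift.

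The hypothesis that $f$ is etale-locally a \emph{trivial} fibration, rather than merely smooth, enters exactly in the first reduction to a literal product, and it cannot be dropped --- for instance $\A^{2}\backslash\{xy=1\}\to\A^{1}$ is smooth but has $R^{1}f_{*}\ol{\Q_{\ell}}$ non-lisse. I expect the one genuinely non-formal point to be the reduction to box products, which has to be done up to iterated extensions rather than literally; the remainder is bookkeeping with the projection formula, smooth base change, finiteness of etale cohomology, and the six-functor formalism, all of which must simply be invoked in their Deligne-Mumford-stack versions over a characteristic-zero field.
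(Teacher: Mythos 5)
Your proof is correct, follows the same overall strategy as the paper (reduce etale-locally to a literal projection $F\times Y\to Y$, decompose $\mathcal{L}$ by the Künneth decomposition of $\pi_1$, handle the model case by six-functor bookkeeping), and in fact patches two imprecisions in the paper's own argument. The paper asserts that \emph{any} lisse sheaf on $F\times Y$ is a box product $\mathcal{M}\boxtimes\mathcal{N}$; as you note, this is false for general lisse sheaves (a nonsplit self-extension of $\mathbf{1}\boxtimes\mathbf{1}$ with monodromy on both factors is not a box product), and the correct statement is that $\mathcal{L}$ admits a finite filtration with box-product graded pieces, which is exactly your Clifford/composition-series reduction together with the observation that ``lisse up to shift'' is a triangulated condition. (In the paper's actual application the sheaf is rank one, so a box product on the nose, but the lemma as stated needs your care.) The paper then ``works etale-locally on $Y$ again'' to kill the $Y$-factor $\mathcal{N}$; this only makes sense if $\mathcal{N}$ has finite monodromy, which is again automatic in the application but not in the stated generality. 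Your route via the projection formula $Rp_*(\mathcal{M}\boxtimes\mathcal{N})\cong Rp_*(\mathrm{pr}_F^*\mathcal{M})\otimes\mathcal{N}$ plus smooth base change avoids that step entirely, and your parenthetical ($Rp_!$ via Künneth, then Verdier duality using smoothness of $X$ and $Y$) is precisely the paper's conclusion. In short: same skeleton, but your version is the one that actually proves the lemma as stated.
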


By \cite{Kontsevich}, the complement of $\mathcal{M}_{0,r}\left(\P^{1},1\right)$
in $\mathcal{\ol M}_{0,r}\left(\P^{1},1\right)_{\A^{r}}$ is a normal
crossings divisor, more precisely a union of divisors $D_{1},\ldots,D_{s}$
comprising maps whose source is two $\P^{1}$'s (with one distinguished
$\P^{1}$ mapping isomorphically onto $\P^{1}$ with degree one).
\begin{prop}
\label{prop:cohomologybound} Let $S_{d_{1}}\times\cdots\times S_{d_{m}}$
be a Young subgroup of the symmetric group $S_{r}$. Suppose $U$
is an open substack of $\ol{\mathcal{M}}_{0,r}\left(\P^{1},1\right)_{\A^{r}}$
given by the complement of the union of a (possibly empty) subset
of the divisors $D_{i}$. Then, for any local system $\mathcal{L}$
of rank one on $\left[U/S_{d_{1}}\times\cdots\times S_{d_{m}}\right]$,
we have 
\[
\dim H^{*}\left(\left[U/S_{d_{1}}\times\cdots\times S_{d_{m}}\right],\mathcal{L}\right)\ll_{m}\left(64m\right)^{r}.
\]
\begin{proof}
For this proof, when we write $\dim H^{*}(-)$, we mean $\max_{i}\left\{ \dim H^{i}\left(-\right)\right\} ,$
which is well-defined for all situations in consideration.

By Lemma \ref{lem:stratification}, $\left[\ol{\mathcal{M}}_{0,r}\left(\P^{1},1\right)_{\A^{r}}/S_{d_{1}}\times\cdots\times S_{d_{m}}\right]$
admits a stratification corresponding to trees in $\mathcal{T}$,
i.e. of the form 
\[
\mathcal{M}_{T}=\left[\left(\left(\prod_{v\ne\text{root}}\mathcal{M}_{0,d(v)}\right)\times\mathcal{\mathcal{M}}_{0,d(\text{root})}\left(\P^{1},1\right)_{\A^{d(\text{root})}}\right)/\stab(T)\right]
\]
with $T\in\mathcal{T}$. 

This means that $\left[U/S_{d_{1}}\times\cdots\times S_{d_{m}}\right]$
admits a stratification of the same form, except with $\mathcal{T}$
replaced with a subset of $\mathcal{T}$. 

In particular,
\begin{align*}
\dim H^{*}\left(\left[U/S_{d_{1}}\times\cdots\times S_{d_{m}}\right],\mathcal{L}\right) & \le\sum_{T\in\mathcal{T}}\dim H^{*}\left(\mathcal{M}_{T},\mathcal{L}\right).
\end{align*}
For $T\in\mathcal{T}$, note that $\stab(T)$ is a semi-direct product
of $S_{\lambda(\text{root})}$ acting on $\left\{ \stab(T)\backslash S_{\lambda(\text{root})}\right\} \cup\left\{ \Id\right\} $
(the latter being a normal subgroup); indeed, note that the partition
$\lambda(v)$ for $v\ne\text{root}$ contains 1 as explained in Example
\ref{exa:graphexample}. Moreover, $\left\{ \stab(T)\backslash S_{\lambda(\text{root})}\right\} \cup\left\{ \Id\right\} $
is clearly the product of stabilizers of the trees $T_{v}$ for $v$
ranging over non-leaf, non-root vertices. Each such tree is also a
rooted tree, so iterating this process implies $\stab(T)$ is an iterated
semi-direct product of $S_{\lambda(v)}$'s. 

The Leray spectral sequence (c.f. Tag 03QA of \cite{stacks}) applied
to the quotient 
\[
q\colon\mathcal{M}_{T}\to\left[\mathcal{\mathcal{M}}_{0,d(\text{root})}\left(\P^{1},1\right)_{\A^{d(\text{root})}}/S_{\lambda(\text{root})}\right]
\]
gives the bound 
\[
\dim H^{*}\left(\mathcal{M}_{T},\mathcal{L}\right)\le\dim H^{*}\left(\left[\mathcal{\mathcal{M}}_{0,d(\text{root})}\left(\P^{1},1\right)_{\A^{d(\text{root})}}/S_{\lambda(\text{root})}\right],Rq_{*}\mathcal{L}\right).
\]

By Lemma \ref{lem:fibration}, $Rq_{*}\mathcal{L}$ is lisse and,
by proper base change (c.f. Tag 095S of \cite{stacks}), has rank
bounded by $\dim H^{*}\left(\left[\left(\prod_{v\ne\text{root}}\mathcal{M}_{0,d(v)}\right)/\left\{ \left\{ \stab(T)\backslash S_{\lambda(\text{root})}\right\} \cup\left\{ \Id\right\} \right\} \right],\mathcal{L}\right)$. 

Let $\PConf_{d}$ be the configuration space of $d$ points in $\C$.
The quotient $\PConf_{d}/S_{\lambda}$ with $\lambda$ a partition
of $d$ has a Fox--Neuwirth stratification by Euclidean spaces, whose
cells are determined by an ordered partition of $d$ of fixed length,
along with an assignment of colors: $\lambda_{1}$ points are colored
one color, $\lambda_{2}$ points are colored another, and so on. For
more details, c.f. \cite{Fox_Neuwirth}. 

Since there are ${d \choose \lambda}$ ways to assign colors and at
most $2^{d}$ ways to choose an ordered partition of $d$ of fixed
length, the number of Fox--Neuwirth cells of $\PConf_{d}\left(\C\right)/S_{\lambda}$
is given by ${d \choose \lambda}2^{d}$.

By definition, $\mathcal{\mathcal{M}}_{0,d(\text{root})}\left(\P^{1},1\right)_{\A^{d(\text{root})}}$
is the same as $\PConf_{d(\text{root})}$, so it follows that 
\[
\dim H^{*}\left(\mathcal{M}_{T},\mathcal{L}\right)\le{d(\text{root}) \choose \lambda(\text{root})}2^{d(\text{root})}\dim H^{*}\left(\left[\left(\prod_{v\ne\text{root}}\mathcal{M}_{0,d(v)}\right)/\left\{ \left\{ \stab(T)\backslash S_{\lambda(\text{root})}\right\} \cup\left\{ \Id\right\} \right\} \right],\mathcal{L}\right).
\]

Since $\stab(T)$ is an iterated semi-direct product of $S_{\lambda(v)}$'s,
we repeat this process using the Leray spectral sequence to obtain
\begin{align*}
 & \dim H^{*}\left(\left[\left(\prod_{v\ne\text{root}}\mathcal{M}_{0,d(v)}\right)/\left\{ \left\{ \stab(T)\backslash S_{\lambda(\text{root})}\right\} \cup\left\{ \Id\right\} \right\} \right],\mathcal{L}\right)\\
 & \le\dim H^{*}\left(\left[\left(\prod_{v\ne\text{root},w}\mathcal{M}_{0,d(v)}\right)/\left\{ \left\{ \stab(T)\backslash\left\{ S_{\lambda(\text{root})}\cup S_{\lambda(w)}\right\} \right\} \cup\left\{ \Id\right\} \right\} \right],R\left(q_{w}\right)_{*}\mathcal{L}\right)
\end{align*}
for some neighbor $w$ of the root (the idea being that $w$ should
be thought of as the root of $T_{w}$) and $q_{w}$ the projection
map to $\left(\prod_{v\ne\text{root},w}\mathcal{M}_{0,d(v)}\right)/\left\{ \left\{ \stab(T)\backslash\left\{ S_{\lambda(\text{root})}\cup S_{\lambda(w)}\right\} \right\} \cup\left\{ \Id\right\} \right\} $. 

Moreover, by the proof of Lemma 7.6 of \cite{EVW}, there is an $S_{b}$-equivariant
isomorphism 
\[
\PConf_{b}\cong\mathcal{M}_{0,b+1}\times\Aff,
\]
where $\Aff$ is the group scheme of upper triangular matrices corresponding
to $z\mapsto Az+B$. The map is given on $S$-points (for an arbitrary
scheme $S$) by sending collections of points $p_{1},\ldots,p_{b}\in\A^{1}\left(S\right)$
such that $p_{i}-p_{j}$ is a unit on $S$ for $i\ne j$ to $\left(\left(\P_{S}^{1};\infty,p_{1},p_{2},\ldots,p_{b}\right),\left(p_{2}-p_{1},p_{1}\right)\right)\in\mathcal{M}_{0,b+1}\left(S\right)\times\Aff(S)$.
Also, $\Aff\cong\G_{m}\times\G_{a}$. 

Let $v$ be a non-leaf and non-root vertex. By Example \ref{exa:graphexample},
every $\lambda(v)$ contains 1 (as an element of the partition), corresponding
to the root. Then, for any rank one local system $\mathcal{L}_{v}$
on $\left[\mathcal{M}_{0,d(v)}/S_{\lambda(v)}\right]$, we have 
\begin{align*}
\dim H^{*}\left(\left[\mathcal{M}_{0,d(v)}/S_{\lambda(v)}\right],\mathcal{L}_{v}\right) & =\dim H^{*}\left(\left[\mathcal{M}_{0,d(v)}/S_{\lambda(v)\backslash1}\right],\mathcal{L}_{v}\right)\\
 & \le\dim H^{*}\left(\left[\PConf_{d(v)-1}/S_{\lambda(v)\backslash1}\right],\mathcal{L}_{v}\boxtimes\Q_{\ell}\right),
\end{align*}
where in the first line $S_{\lambda(v)\backslash1}$ acts on the $d(v)-1$
points that do not correspond to the root, and in the second line
we use the Kunneth formula (Tag 0F13 of \cite{stacks}).

Then, by applying the same argument about counting Fox--Neuwirth
cells (and the lisse-ness of $R\left(q_{w}\right)_{*}\mathcal{L}$),
it follows that 
\begin{align*}
 & \dim H^{*}\left(\left[\left(\prod_{v\ne\text{root}}\mathcal{M}_{0,d(v)}\right)/\left\{ \left\{ \stab(T)\backslash S_{\lambda(\text{root})}\right\} \cup\left\{ \Id\right\} \right\} \right],\mathcal{L}\right)\\
 & \le{d(w)-1 \choose \lambda(w)\backslash1}2^{d(w)-1}\dim H^{*}\left(\left[\left(\prod_{v\ne\text{root},w}\mathcal{M}_{0,d(v)}\right)/\left\{ \left\{ \stab(T)\backslash\left\{ S_{\lambda(\text{root})}\cup S_{\lambda(w)}\right\} \right\} \cup\left\{ \Id\right\} \right\} \right],\mathcal{L}\right),
\end{align*}
which means 
\begin{align*}
\dim H^{*}\left(\mathcal{M}_{T},\mathcal{L}\right) & \le{d(w)-1 \choose \lambda(w)\backslash1}2^{d(w)-1}{d(\text{root}) \choose \lambda(\text{root})}2^{d(\text{root})}\\
 & \qquad\cdot\dim H^{*}\left(\left[\left(\prod_{v\ne\text{root},w}\mathcal{M}_{0,d(v)}\right)/\left\{ \left\{ \stab(T)\backslash\left\{ S_{\lambda(\text{root})}\cup S_{\lambda(w)}\right\} \right\} \cup\left\{ \Id\right\} \right\} \right],\mathcal{L}\right).
\end{align*}
Iterating this process, it follows that 
\begin{align*}
\dim H^{*}\left(\mathcal{M}_{T},\mathcal{L}\right) & \le\left(\prod_{v\ne\text{root}}{d(v)-1 \choose \lambda(v)\backslash1}2^{d(v)-1}\right){d(\text{root}) \choose \lambda(\text{root})}2^{d(\text{root})}.
\end{align*}

By Lemma \ref{lem:graph_theory-1}, the sum of valences of non-leaf
vertices minus the number of non-leaf, non-root vertices is at most
$2r-1$. As such, we have 
\begin{align*}
\dim H^{*}\left(\left[U/S_{d_{1}}\times\cdots\times S_{d_{m}}\right],\mathcal{L}\right) & \le\sum_{T\in\mathcal{T}}\dim H^{*}\left(\mathcal{M}_{T},\mathcal{L}\right)\\
 & \le\sum_{T\in\mathcal{T}}\left(\prod_{v\ne\text{root}}{d(v)-1 \choose \lambda(v)\backslash1}2^{d(v)-1}\right){d(\text{root}) \choose \lambda(\text{root})}2^{d(\text{root})}\\
 & \le2^{2r-1}\sum_{T\in\mathcal{T}}\left({d(\text{root}) \choose \lambda(\text{root})}\prod_{v\ne\text{root}}{d(v)-1 \choose \lambda(v)\backslash1}\right).
\end{align*}

By Lemma \ref{lem:planartrees-1}, $\sum_{T\in\mathcal{T}}\left({d(\text{root}) \choose \lambda(\text{root})}\prod_{v\ne\text{root}}{d(v)-1 \choose \lambda(v)\backslash1}\right)$
is simply the number of rooted planar trees isomorphic to some tree
in $\mathcal{T}$. By Lemma \ref{lem:numberplanartree-1}, this is
$O_{m}\left(\left(16m\right)^{r}\right)$. 

Then, we have 
\begin{align*}
\dim H^{*}\left(\left[U/S_{d_{1}}\times\cdots\times S_{d_{m}}\right],\mathcal{L}\right) & \ll_{m}4^{r}\cdot\left(16m\right)^{r}=\left(64m\right)^{r},
\end{align*}
as desired.

\end{proof}
\end{prop}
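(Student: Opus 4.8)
The plan is to reduce everything to counting cells in stratifications of (quotients of) configuration spaces, organized along the tree stratification of Lemma \ref{lem:stratification}. First I would invoke that lemma: since $U$ is the complement of a subset of the boundary divisors $D_i$, the quotient $\left[U/S_{d_1}\times\cdots\times S_{d_m}\right]$ is a union of the strata $\mathcal{M}_T$ for $T$ ranging over a subset of $\mathcal{T}$, and the long exact sequences attached to this stratification (equivalently the spectral sequence of the filtration by closed unions of strata) give
\[
\dim H^*\!\left(\left[U/S_{d_1}\times\cdots\times S_{d_m}\right],\mathcal{L}\right)\le\sum_{T\in\mathcal{T}}\dim H^*\!\left(\mathcal{M}_T,\mathcal{L}\right),
\]
where $\dim H^*(-)$ abbreviates $\max_i\dim H^i(-)$. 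So it suffices to bound each $\dim H^*(\mathcal{M}_T,\mathcal{L})$ and to control the number of terms.

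Next I would analyze the group $\stab(T)$. Using example \ref{exa:graphexample}---every non-root vertex $v$ contributes a part equal to $1$ to $\lambda(v)$, corresponding to the branch $T_w$ containing the original root---one checks that $\stab(T)$ is an iterated semidirect product of the symmetric groups $S_{\lambda(v)}$, with $S_{\lambda(\text{root})}$ acting on the product of the stabilizers of the branches. I would then peel off these factors one at a time via the Leray spectral sequence for the corresponding quotient maps $q$, combined with Lemma \ref{lem:fibration}, which guarantees that each $Rq_*$ of a lisse sheaf is again lisse up to shift, with rank bounded by the cohomology of a fiber. This reduces the problem to bounding $\dim H^*$ of rank one local systems on $\left[\mathcal{M}_{0,d(v)}/S_\mu\right]$ for the non-root vertices and on $\left[\mathcal{M}_{0,d(\text{root})}\left(\P^1,1\right)_{\A^{d(\text{root})}}/S_\mu\right]$ for the root, for various sub-partitions $\mu$.

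For these base cases I would use two geometric inputs: (i) $\mathcal{M}_{0,d(\text{root})}\left(\P^1,1\right)_{\A^{d(\text{root})}}$ is $\PConf_{d(\text{root})}$ by definition, and $\PConf_d/S_\lambda$ carries a Fox--Neuwirth stratification into affine cells indexed by an ordered partition of $d$ of fixed length (at most $2^d$ choices) together with a coloring (${d \choose \lambda}$ choices), hence at most ${d \choose \lambda}2^d$ cells, bounding its cohomology; and (ii) the $S_b$-equivariant isomorphism $\PConf_b\cong\mathcal{M}_{0,b+1}\times\Aff$ from the proof of lemma 7.6 of \cite{EVW}, which---using the part ``$1$'' of $\lambda(v)$ to drop the marked point attached to the root and the Künneth formula---bounds the cohomology of $\left[\mathcal{M}_{0,d(v)}/S_{\lambda(v)}\right]$ by ${d(v)-1 \choose \lambda(v)\backslash\{1\}}2^{d(v)-1}$. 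Assembling these through the iterated Leray argument yields
\[
\dim H^*\!\left(\mathcal{M}_T,\mathcal{L}\right)\le{d(\text{root}) \choose \lambda(\text{root})}2^{d(\text{root})}\prod_{v\ne\text{root}}{d(v)-1 \choose \lambda(v)\backslash\{1\}}2^{d(v)-1}.
\]

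Finally I would split the powers of $2$ from the binomial coefficients. By Lemma \ref{lem:graph_theory-1}(iii) the sum of valences over non-leaf vertices minus the number of non-leaf, non-root vertices is at most $2r-1$, so all the powers of $2$ together contribute at most $2^{2r-1}<4^r$. The remaining sum $\sum_{T\in\mathcal{T}}{d(\text{root}) \choose \lambda(\text{root})}\prod_{v\ne\text{root}}{d(v)-1 \choose \lambda(v)\backslash\{1\}}$ is, by Lemma \ref{lem:planartrees-1}, exactly the number of rooted planar trees isomorphic to some tree in $\mathcal{T}$, which is $O_m\!\left((16m)^r\right)$ by Lemma \ref{lem:numberplanartree-1}; multiplying gives $4^r\cdot(16m)^r=(64m)^r$. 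I expect the main obstacle to be the bookkeeping in the iterated Leray step: one must verify that at each stage the relevant quotient map really has the fibration property required by Lemma \ref{lem:fibration} and that the partition being peeled is the one attached to the current ``root'', so that the telescoping product of binomial factors lines up with Lemma \ref{lem:planartrees-1} exactly; the individual geometric inputs (Fox--Neuwirth cells, the $\PConf\cong\mathcal{M}_{0,n}\times\Aff$ splitting, lisseness of the pushforwards) are by contrast routine once the setup is in place.
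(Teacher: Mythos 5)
Your proposal follows essentially the same route as the paper's proof: the tree stratification of Lemma \ref{lem:stratification}, the iterated Leray argument peeling off $\stab(T)$ vertex by vertex via Lemma \ref{lem:fibration}, the Fox--Neuwirth cell count for $\PConf_d/S_\lambda$, the identification $\PConf_b\cong\mathcal{M}_{0,b+1}\times\Aff$ together with the unit part of $\lambda(v)$, and finally the three combinatorial lemmas on trees to collect the product of binomial coefficients and powers of $2$. The steps, the intermediate bounds, and the final constant $(64m)^r$ all match the paper's argument.
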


\section{Relationship between $a$-coefficients and Fourier transforms}

Let us continue to use the notation from Subsection \ref{subsec:notation}.
Moreover, as in the hypotheses of Theorem \ref{thm:mainresult}, we
assume for this section that 
\begin{enumerate}
\item $n$ is even and 
\item $M_{1,1}=\cdots=M_{1,s}=0$ and $M_{1,s+1},\ldots,M_{1,m}$ are non-zero,
with $s\ge1$. 
\end{enumerate}
For a prime polynomial $\pi\in\F_{q}[T]$, a non-trivial multiplicative
character $\chi\colon\left(\F_{q}[T]/\pi\right)^{\times}\to\C^{\times}$,
a non-trivial additive character $\psi\colon\F_{q}[T]/\pi\to\C^{\times}$,
and a polynomial $h\in\F_{q}[T]$ relatively prime to $\pi$, note
that 
\begin{align*}
\sum_{f\in\left(\F_{q}[T]/\pi\right)^{\times}}\chi(f)\psi\left(hf\right) & =\sum_{f\in\left(\F_{q}[T]/\pi\right)^{\times}}\chi\left(f/h\right)\psi\left(f\right)=\ol{\chi}\left(h\right)\sum_{f\in\left(\F_{q}[T]/\pi\right)^{\times}}\chi\left(f\right)\psi\left(f\right),
\end{align*}
which gives a relationship between the conjugate Dirichlet character
$\ol{\chi}$ and the Fourier transform of $\chi$. 

In this section, we establish a variant of this relationship for $a$-coefficients,
generalizing the strategy from Proposition 4.4 in \cite{sawin_general}. 
\begin{lem}
\label{lem:ncdonclosed}Whether a divisor $D$ on a variety $X$ is
a normal crossings divisors can be checked on closed points.
\begin{proof}
Since the strict Henselization of a local ring preserves (and reflects)
regularity, it suffices to show that if for every closed point $p\in D$,
$\mathcal{O}_{X,p}$ is regular and there is a regular system of parameters
$x_{1},\ldots,x_{d}\in\mathfrak{m}_{p}$ and $1\le r\le d$ such that
$x_{1}\cdots x_{r}$ cuts out $D$ at $p$, then $D$ is a strict
normal crossings divisor (c.f. Tag 0CBN in \cite{stacks}).

Note that in a variety, the set of closed points is dense. 

Let $p$ be an arbitrary point of $D$. By Serre's theorem on openness
of regularity, $\mathcal{O}_{X,p}$ is regular. Let $q$ be a closed
point that specializes $p$ (i.e. so that $\mathcal{O}_{X,p}$ is
a localization of $\mathcal{O}_{X,q}$). Pick a regular system of
parameters $x_{1},\ldots,x_{d}\in\mf m_{q}$ such that $D$ is cut
out by $x_{1}\cdots x_{r}$. Localization is flat, so this regular
sequence is mapped to a regular sequence in $\mathcal{O}_{X,p}.$ 
\end{proof}
\end{lem}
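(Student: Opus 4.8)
The plan is to reduce to the strict normal crossings case and then exploit the density of closed points together with flatness of localization. Being a normal crossings divisor at a point $p$ is, after passing to the strict Henselization of $\mathcal{O}_{X,p}$, exactly the strict-normal-crossings condition there; since strict Henselization preserves and reflects regularity (and a regular system of parameters cutting out $D$ is already visible at the level of $\mathcal{O}^{\mathrm{sh}}_{X,p}$), it is enough to show the following: if for \emph{every closed} point $p\in D$ the ring $\mathcal{O}_{X,p}$ is regular and admits a regular system of parameters $x_1,\dots,x_d$ with $1\le r\le d$ and $I_{D,p}=(x_1\cdots x_r)$, then the same holds at \emph{every} point of $D$.

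To prove this I would fix an arbitrary point $p\in D$. A variety is a Jacobson scheme, so closed points are dense in the nonempty closed set $\overline{\{p\}}\subseteq D$; pick a closed point $q\in\overline{\{p\}}$, so that $\mathcal{O}_{X,p}$ is a localization of $\mathcal{O}_{X,q}$. Since regularity passes to localizations, $\mathcal{O}_{X,p}$ is regular. By hypothesis choose a regular system of parameters $x_1,\dots,x_d\in\mathfrak{m}_q$ with $I_{D,q}=(x_1\cdots x_r)$. Passing to $\mathcal{O}_{X,p}$, the $x_i$ lying outside the prime $\mathfrak{p}$ of $p$ become units; after relabeling, let $x_1,\dots,x_{r'}$ be exactly those among $x_1,\dots,x_r$ lying in $\mathfrak{m}_p$ --- equivalently, the local branches of $D$ through $p$ --- so that $I_{D,p}=(x_1\cdots x_{r'})$, and $r'\ge 1$ because $p\in D$.

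The remaining point to check is that $x_1,\dots,x_{r'}$ extends to a regular system of parameters of $\mathcal{O}_{X,p}$. These elements form part of a regular system of parameters of the regular (hence Cohen--Macaulay) ring $\mathcal{O}_{X,q}$, so they are a regular sequence there; flatness of $\mathcal{O}_{X,q}\to\mathcal{O}_{X,p}$ together with the fact that they lie in $\mathfrak{m}_p$ shows they remain a regular sequence in $\mathcal{O}_{X,p}$, whence $\dim\mathcal{O}_{X,p}/(x_1,\dots,x_{r'})=\dim\mathcal{O}_{X,p}-r'$. On the other hand $\mathcal{O}_{X,p}/(x_1,\dots,x_{r'})$ is a localization of $\mathcal{O}_{X,q}/(x_1,\dots,x_{r'})$, which is regular (a quotient of a regular ring by part of a regular system of parameters), so it is regular of dimension $\dim\mathcal{O}_{X,p}-r'$; comparing this with the embedding dimension of $\mathcal{O}_{X,p}$ forces the images of $x_1,\dots,x_{r'}$ in $\mathfrak{m}_p/\mathfrak{m}_p^2$ to be linearly independent, hence to extend to a regular system of parameters. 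Thus $D$ is strict normal crossings at $p$, which completes the argument. The step I expect to require the most care is precisely this last one: bookkeeping which branches of $D$ survive the localization and upgrading ``regular sequence'' to ``part of a regular system of parameters'' --- flatness handles the first half and regularity of the quotient handles the second. (One could also argue more cheaply, if one takes the definition that $D$ is normal crossings at $p$ as soon as some étale neighborhood of $p$ makes $D$ strict normal crossings: the images of such neighborhoods around all closed points of $D$, together with $X\setminus D$, form an open cover of the Jacobson scheme $X$ and hence cover every point; the localization argument above has the advantage of being self-contained.)
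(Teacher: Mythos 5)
Your proposal takes essentially the same route as the paper: reduce via strict Henselization to the strict-normal-crossings condition, use density of closed points in a Jacobson scheme to find a closed point $q$ specializing an arbitrary $p\in D$, and pull the regular system of parameters at $q$ down to $\mathcal{O}_{X,p}$ via the flat localization. The one difference worth flagging is that you fill in two details the paper's terse proof glosses over: first, that some of the $x_i$ from the factorization at $q$ become units in $\mathcal{O}_{X,p}$, so one must discard those and keep only the branches through $p$; second, that the surviving $x_1,\dots,x_{r'}$ are not merely a regular sequence but extend to a regular system of parameters of $\mathcal{O}_{X,p}$ --- which does require an additional observation (your argument via regularity of the quotient $\mathcal{O}_{X,p}/(x_1,\dots,x_{r'})$ and dimension counting is a clean way to do it; ``regular sequence'' alone would not suffice, as e.g.\ $x^2$ is regular but not a parameter). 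These are genuine gaps in the paper's one-line ``localization is flat, so this regular sequence is mapped to a regular sequence,'' and your version is the correct, complete form of the argument.
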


Recall that $\prod_{i=1}^{m}\A^{d_{i}}$ parametrizes tuples $\left(f_{1},\ldots,f_{m}\right)$
of monic polynomials. Let $X\subset\prod_{i=1}^{m}\A^{d_{i}}$ be
such that $\left(f_{1},f_{1}'\right)=\cdots=\left(f_{m},f_{m}'\right)=1$
and $\left(f_{i},f_{j}\right)=1$ for $(i,j)\not\in\left\{ (1,2),\ldots,(1,m)\right\} $,
i.e. square-free tuples $(f_{1},\ldots,f_{m})$ that are pairwise
coprime aside from $f_{1}$. 
\begin{lem}
\label{lem:complementisNCD}Fix non-negative integers $d_{1},\ldots,d_{m}$
such that $d_{1}\ge d_{s+1}+\cdots+d_{m}$. Consider the open subset
 $U\subset X$ such that $\left(f_{1},f_{i}\right)=1$ for all $i\ge s+1$
and $D$ the complement, i.e. $D$ is the locus of points such that
$f_{1}$ and some $f_{i}$ share a factor. 

Then, $D$ is a normal crossings divisor. 
\begin{proof}
We use Lemma \ref{lem:ncdonclosed}. 

Verifying that $D$ is a normal crossings divisor is an etale-local
statement, so we may pull back by the factorization/multiplication
maps $\underbrace{\A^{1}\times\A^{1}\times\cdots\times\A^{1}}_{d_{s+1}}\to\A^{d_{s+1}},\ldots,\underbrace{\A^{1}\times\A^{1}\times\cdots\times\A^{1}}_{d_{m}}\to\A^{d_{m}},$
which is etale on $X$ by assumption (c.f. the proof of Lemma 3.1
in \cite{sawin_general}). 

Then, for any $f_{j}(T)$, $D$ etale-locally near $f_{j}(t)$ looks
like the union of hyperplanes cut out by $T-\alpha_{ij}$, where the
$\alpha_{ij}$'s are the roots of $f_{j}$. For any $S_{j}\subset\{1,2,\ldots,d_{j}\}$,
ranging over $s+1\le j\le m$, the intersection of the corresponding
hyperplanes is 
\[
\left\{ f\in\A^{d_{1}}:\prod_{j=s+1}^{m}\prod_{i\in S_{j}}\left(T-\alpha_{ij}\right)|f\right\} \times\A^{d_{2}}\times\cdots\times\A^{d_{s}},
\]
which has the expected codimension $\left|\bigcup_{j=s+1}^{m}S_{j}\right|$,
since $\left|\bigcup_{j=s+1}^{m}S_{j}\right|\le d_{s+1}+\cdots+d_{m}\le d_{1}$. 
\end{proof}
\end{lem}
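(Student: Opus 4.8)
The plan is to reduce the claim to a local statement at closed points via Lemma \ref{lem:ncdonclosed}, and then to exhibit an explicit local description of $D$ near such a point as a union of coordinate hyperplanes in suitable \'etale-local coordinates. First I would invoke Lemma \ref{lem:ncdonclosed} to reduce to checking that at every closed point $(g_1,\ldots,g_m)$ of $D$ (with $g_1$ sharing a factor with some $g_i$, $i\ge s+1$), the local ring is regular and $D$ is cut out there by a product of elements forming part of a regular system of parameters.

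Next, I would pass to a convenient \'etale neighborhood. Following the proof of lemma 3.1 in \cite{sawin_general}, since $(f_1,f_1')=\cdots=(f_m,f_m')=1$ and the $f_i$ are pairwise coprime except possibly for the pairs $(1,i)$, the multiplication (``unordered root'') maps $(\A^1)^{d_j}\to\A^{d_j}$ are \'etale over $X$; pulling back along these for $j=s+1,\ldots,m$ replaces each $f_j$ by a product $\prod_i(T-\alpha_{ij})$ of distinct linear factors with distinct roots across all $j$. In these coordinates $D$ is the locus where $f_1\in\A^{d_1}$ is divisible by at least one of the linear forms $T-\alpha_{ij}$, i.e.\ the union over pairs $(i,j)$ (with $s+1\le j\le m$, $1\le i\le d_j$) of the hyperplanes $H_{ij}=\{(T-\alpha_{ij})\mid f_1\}\times\A^{d_2}\times\cdots\times\A^{d_s}$. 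Each $H_{ij}$ is linear of codimension one in $\A^{d_1}\times\cdots\times\A^{d_s}$, hence smooth.

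For the normal crossings condition I would then compute intersections: for any subsets $S_j\subseteq\{1,\ldots,d_j\}$ ranging over $s+1\le j\le m$, the intersection $\bigcap_{j}\bigcap_{i\in S_j}H_{ij}$ is the set of $f_1$ divisible by $\prod_{j}\prod_{i\in S_j}(T-\alpha_{ij})$, times $\A^{d_2}\times\cdots\times\A^{d_s}$. Because all the $\alpha_{ij}$ are distinct, this product is a polynomial of degree $\bigl|\bigsqcup_j S_j\bigr|$, so imposing divisibility by it cuts the codimension by exactly that many (divisibility of a degree-$d_1$ monic by a fixed monic of degree $k\le d_1$ is a linear condition of codimension $k$ on the coefficient space $\A^{d_1}$). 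The hypothesis $d_1\ge d_{s+1}+\cdots+d_m$ guarantees $\bigl|\bigcup_j S_j\bigr|\le d_{s+1}+\cdots+d_m\le d_1$, so no overdetermination occurs and the intersections always have the expected codimension; together with smoothness of the ambient space this is exactly the (strict) normal crossings condition, which descends back along the \'etale maps.

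The main obstacle I expect is bookkeeping the transversality/expected-codimension claim cleanly: one must be careful that ``distinct roots across \emph{all} of $f_{s+1},\ldots,f_m$'' really holds in the chosen \'etale chart (this uses that $f_i,f_j$ are coprime for $i\ne j$ in $\{s+1,\ldots,m\}$, which is part of the definition of $X$), and that the linear-algebra computation of $\codim$ of the divisibility locus is uniform in the partition data. Once one is in the coordinates where $D$ is literally a union of coordinate hyperplanes, the normal crossings verification is immediate, so the content is entirely in setting up the \'etale chart correctly.
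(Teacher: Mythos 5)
Your proposal follows the paper's proof essentially step for step: reduction to closed points via Lemma \ref{lem:ncdonclosed}, the \'etale pullback along the multiplication maps $(\A^1)^{d_j}\to\A^{d_j}$ for $j\ge s+1$ (citing lemma 3.1 of \cite{sawin_general}), identification of $D$ as a union of hyperplanes $T-\alpha_{ij}$, and the codimension count $\bigl|\bigsqcup_j S_j\bigr|\le d_{s+1}+\cdots+d_m\le d_1$ for intersections. Your explicit remark that the roots $\alpha_{ij}$ are distinct across distinct $j\in\{s+1,\ldots,m\}$ because the corresponding $f_j$ are pairwise coprime on $X$ makes explicit a point the paper leaves implicit, but the argument is the same.
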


\begin{lem}
\label{lem:extenddefinitionofa}Using the notation of Lemma \ref{lem:complementisNCD},
let $\left(f_{1},f_{1}'\right)=\cdots=\left(f_{m},f_{m}'\right)=1$
and $\left(f_{i},f_{j}\right)=1$ for $(i,j)\not\in\left\{ (1,2),\ldots,(1,m)\right\} $.
Then, 
\[
a\left(f_{1},\ldots,f_{m};M\right)=\prod_{j>i\ge1}\left(\frac{f_{i}}{f_{j}}\right)_{\chi}^{M_{i,j}}\prod_{i\ge s+1}\left(\frac{f_{i}'}{f_{i}}\right)_{\chi}^{M_{i,i}}.
\]
 
\begin{proof}
The idea is that this formula is known on $U$ and we extend it to
$X$. More specifically, the $a$-coefficient vanishes where $f_{i}$
is not square-free for some $i\ge s+1$. 

By definition, $a\left(f_{1},\ldots,f_{m};M\right)$ is the trace
of Frobenius of the complex 
\[
K_{d_{1},\ldots,d_{m}}=j_{!*}\left(\mathcal{L}_{\chi}\left(F_{d_{1},\ldots,d_{m}}\right)\left[d_{1}+\cdots+d_{m}\right]\right)\left[-d_{1}-\cdots-d_{m}\right].
\]
Let $j'\colon U\subset X$ and $j''\colon X\subset\A^{d_{1}}\times\cdots\times\A^{d_{m}}$
as in Lemma \ref{lem:complementisNCD}.

Then, using Lemma \ref{lem:complementisNCD} to verify that $D$ is
a normal crossings divisor and observing that $\mathcal{L}_{\chi}\left(F_{d_{1},\ldots,d_{m}}\right)$
is lisse (by definition) and tame (by tameness of Kummer sheaves),
Proposition \ref{prop:IC_is_sheaf} then tells us that 
\[
K_{d_{1},\ldots,d_{m}}=j''_{!*}j'_{!}\left(\mathcal{L}_{\chi}\left(F_{d_{1},\ldots,d_{m}}\right)\left[d_{1}+\cdots+d_{m}\right]\right)\left[-d_{1}-\cdots-d_{m}\right].
\]
By taking the trace of Frobenius of this complex, noting that the
stalk at a point outside $U$ of the extension by zero is zero, the
result follows.
\end{proof}
\end{lem}

\begin{lem}
\label{lem:extraassumptionbeforedensity}Suppose $\deg f_{1}\ge\deg f_{s+1}\cdots f_{m}$,
i.e. $d_{1}\ge d_{s+1}+\cdots+d_{m}$. Also, assume $\left(f_{i},f_{i}'\right)=1$
for $i\ge1$ and $\left(f_{i},f_{j}\right)=1$ for $(i,j)\not\in\left\{ (1,2),\ldots,(1,m)\right\} $.
Then,
\[
a\left(f_{1},\ldots,f_{m};M'\right)=\fudge\left(d_{\ge s+1};M_{1,\ge s+1}\right)\sum_{h\in\mathcal{M}_{d_{s+1}+\cdots+d_{m}}}a\left(h,f_{2},\ldots,f_{m};M\right)e\left(\frac{hf_{1}}{f_{s+1}\cdots f_{m}}\right).
\]
\end{lem}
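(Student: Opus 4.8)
The plan is to evaluate both sides explicitly via the sheaf-theoretic formula of Lemma~\ref{lem:extenddefinitionofa} and then recognise the sum on the right as a classical Gauss sum. Put $D:=d_{s+1}+\cdots+d_{m}$ and let $\chi_{\bullet}$ be the multiplicative character $h\mapsto\prod_{j\ge s+1}(h/f_{j})_{\chi}^{M_{1,j}}$ modulo $f_{s+1}\cdots f_{m}$; since the $f_{j}$ ($j\ge s+1$) are squarefree and pairwise coprime and each $\chi^{M_{1,j}}$ is nontrivial (as $M_{1,j}\ne 0$), $\chi_{\bullet}$ is primitive modulo $f_{s+1}\cdots f_{m}$. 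The hypotheses of Lemma~\ref{lem:extraassumptionbeforedensity} are exactly those of Lemma~\ref{lem:extenddefinitionofa} applied to $(f_{1},\dots,f_{m})$ with the matrix $M'$ in place of $M$ (note $M'$ again satisfies $M'_{1,i}=0$ for $i\le s$ and $M'_{1,i}=-M_{1,i}\ne 0$ for $i\ge s+1$, so the divisors along which $f_{1}$ meets the $f_{i}$ ($i\ge s+1$) still carry nontrivial monodromy, and the degree hypothesis is the required one). Hence
\[
a(f_{1},\dots,f_{m};M')=\prod_{1\le i<j\le m}\left(\frac{f_{i}}{f_{j}}\right)_{\chi}^{M'_{i,j}}\prod_{i\ge s+1}\left(\frac{f_{i}'}{f_{i}}\right)_{\chi}^{M'_{i,i}},
\]
and substituting the definition of $M'$ and isolating the dependence on $f_{1}$ gives $a(f_{1},\dots,f_{m};M')=\overline{\chi_{\bullet}}(f_{1})\cdot a_{0}\cdot E$, where $\overline{\chi_{\bullet}}(f_{1})=\prod_{j\ge s+1}(f_{1}/f_{j})_{\chi}^{-M_{1,j}}$ is the $f_{1}$-dependent part, $E=\prod_{s+1\le i<j\le m}(f_{i}/f_{j})_{\chi}^{M_{1,i}+M_{1,j}}\prod_{i\ge s+1}(f_{i}'/f_{i})_{\chi}^{M_{1,i}+n/2}$ is the discrepancy between $M'$ and $M$ in the lower-right $(m-s)\times(m-s)$ block, and $a_{0}$ is the remaining ($f_{1}$-free) factor.

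\textbf{The right-hand side and the key extension.} The main sub-step is to show that for every $h\in\mathcal{M}_{D}$ one has $a(h,f_{2},\dots,f_{m};M)=a_{0}\cdot\chi_{\bullet}(h)$, with the same $a_{0}$. The point is that $M_{1,1}=0$, so the function $F_{D,d_{2},\dots,d_{m}}$ cutting out the Kummer sheaf has no factor $\Res(h',h)$; consequently the argument of Lemma~\ref{lem:complementisNCD} (which never used squarefreeness of the first polynomial) still shows that the complement of the ``$F$-invertible'' locus inside the open set $\{(f_{i},f_{i}')=1\,(i\ge 2),\ (f_{i},f_{j})=1\,(2\le i<j\le m)\}$ is a normal crossings divisor, all of whose components carry nontrivial monodromy (governed by the $M_{1,i}\ne 0$). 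By Proposition~\ref{prop:IC_is_sheaf} the relevant intermediate extension is the extension by zero of $\mathcal{L}_{\chi}(F)$ from that locus, so reading off the stalk at $(h,f_{2},\dots,f_{m})$ gives the formula; this simultaneously proves, in the present case, that $a(-,f_{2},\dots,f_{m};M)$ depends only on the first argument modulo $f_{s+1}\cdots f_{m}$. (The degree hypothesis $\deg f_{1}\ge\deg(f_{s+1}\cdots f_{m})=D$ is exactly what the left-hand side needs; for the terms $a(h,\dots)$ it holds automatically, with equality.) Since $h\mapsto h\bmod f_{s+1}\cdots f_{m}$ is a bijection $\mathcal{M}_{D}\to\F_{q}[T]/(f_{s+1}\cdots f_{m})$, the right-hand side of Lemma~\ref{lem:extraassumptionbeforedensity} equals $\fudge(d_{\ge s+1};M_{1,\ge s+1})\cdot a_{0}\cdot\sum_{h}\chi_{\bullet}(h)\,e\left(\frac{hf_{1}}{f_{s+1}\cdots f_{m}}\right)$.

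\textbf{The Gauss sum.} Expanding $f_{1}/(f_{s+1}\cdots f_{m})$ in partial fractions as a polynomial plus $\sum_{j\ge s+1}g_{j}/f_{j}$ with $g_{j}\equiv f_{1}(\prod_{l\ne j}f_{l})^{-1}\bmod f_{j}$ a unit, and applying the Chinese remainder theorem, the sum factors into local Gauss sums: the factors $g_{j}$ contribute, via reciprocity for the residue symbol, exactly $\overline{\chi_{\bullet}}(f_{1})\cdot\chi(-1)^{\sum_{s+1\le i<j\le m}d_{i}d_{j}M_{1,i}}\prod_{s+1\le i<j\le m}(f_{i}/f_{j})_{\chi}^{M_{1,i}+M_{1,j}}$, while each local factor is evaluated from the residue pairing $\Res(h/\pi)=\Tr\bigl(h(\theta)/\pi'(\theta)\bigr)$ ($\theta$ the image of $T$ in $\F_{q}[T]/\pi$, $\Tr$ the trace to $\F_{q}$), the Hasse--Davenport relation, and multiplicativity over the prime divisors of $f_{j}$, yielding $\sum_{h\bmod f_{j}}(h/f_{j})_{\chi}^{M_{1,j}}e(h/f_{j})=(f_{j}'/f_{j})_{\chi}^{M_{1,j}}\,(f_{j}'/f_{j})_{\chi}^{n/2}\,(-1)^{\frac{d_{j}(d_{j}-1)(q-1)}{4}}\,G(\chi^{M_{1,j}},\psi)^{d_{j}}$; here the factor $(f_{j}'/f_{j})_{\chi}^{n/2}$ and the sign come from $\chi^{n/2}(\Res(f_{j}',f_{j}))$ together with the fact that an irreducible polynomial of degree $e$ over $\F_{q}$ has square discriminant precisely when $e$ is odd. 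Assembling these and multiplying by $\fudge(d_{\ge s+1};M_{1,\ge s+1})$, the Gauss sums $\prod_{i\ge s+1}G(\chi^{M_{1,i}},\psi)^{d_{i}}$ cancel, the two $(-1)$-signs cancel, and the $\chi(-1)$-exponents double to $1$; what remains is precisely $\overline{\chi_{\bullet}}(f_{1})\cdot a_{0}\cdot E$, which is the left-hand side.

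\textbf{Main obstacle.} The crux is the extension of Lemma~\ref{lem:extenddefinitionofa} to a non-squarefree first polynomial — needed to evaluate the individual terms $a(h,f_{2},\dots,f_{m};M)$ in the sum — which is legitimate precisely because $M_{1,1}=0$ makes the ``discriminant'' divisor of $h$ invisible to $\mathcal{L}_{\chi}(F)$. The remaining work is bookkeeping: tracking the reciprocity, Hasse--Davenport, and discriminant signs so that they recombine into $\fudge$ on the nose, with no stray root of unity.
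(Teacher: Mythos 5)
Your proposal is correct and follows essentially the same strategy as the paper's proof: use the explicit sheaf-theoretic formula of Lemma~\ref{lem:extenddefinitionofa} to rewrite both sides, isolate the $f_1$- and $h$-dependence into a primitive Dirichlet character $\chi_{\bullet}$ modulo $f_{s+1}\cdots f_m$, and evaluate the resulting Gauss sum via partial fractions (equivalently, the residue theorem and the substitution $h\mapsto hf_1^{-1}$ in the paper), the Chinese remainder theorem, residue-symbol reciprocity, Hasse--Davenport, and Stickelberger's parity of the discriminant; the signs and Gauss factors then cancel against $\fudge$ exactly as in the paper.

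One place where you are actually more careful than the paper: to evaluate the terms $a(h,f_2,\dots,f_m;M)$ for \emph{all} $h\in\mathcal{M}_{D}$ one must apply the explicit formula to non-squarefree $h$, whereas Lemma~\ref{lem:extenddefinitionofa} as stated carries the hypothesis $(f_1,f_1')=1$. Your observation — that since $M_{1,1}=0$ the discriminant of the first polynomial never enters $F$, the squarefreeness hypothesis on the first coordinate can be dropped from the ambient open set of Lemma~\ref{lem:complementisNCD} without disturbing the normal-crossings argument, and Proposition~\ref{prop:IC_is_sheaf} still gives extension-by-zero — is exactly the justification that makes this step rigorous, and it is left implicit in the paper. (Your remark that the same reasoning simultaneously yields the ``independence mod $f_{s+1}\cdots f_m$'' statement in this special case is also correct, though the paper only establishes that later, via the density trick, in greater generality.) The only thing you should say explicitly is that when $(f_1,f_j)\ne 1$ for some $j\ge s+1$, the $g_j$ need not be units and the partial-fractions substitution breaks down; but in that case $\overline{\chi_{\bullet}}(f_1)=0$ forces the left side to vanish, and the right side vanishes too because $h\mapsto h\bmod f_{s+1}\cdots f_m$ still surjects onto $\F_q[T]/(f_{s+1}\cdots f_m)$ and $a(h,\dots;M)=a_0\chi_\bullet(h)$ sums against the nontrivial additive phase to zero — the paper leaves this case equally implicit (its substitution $h\mapsto hf_1^{-1}$ presumes $(f_1,f_{s+1}\cdots f_m)=1$ as well).
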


\begin{proof}
By Lemma \ref{lem:extenddefinitionofa}, we have 
\begin{align*}
 & \sum_{h\in\mathcal{M}_{\deg f_{s+1}\cdots f_{m}}}a\left(h,f_{2},\ldots,f_{m},M\right)e\left(\frac{hf_{1}}{f_{s+1}\cdots f_{m}}\right)\\
 & =\sum_{h\in\F_{q}[T]/f_{s+1}\cdots f_{m}}\prod_{i\ge s}\left(\frac{h}{f_{i}}\right)_{\chi}^{M_{1,i}}\prod_{j>i>1}\left(\frac{f_{i}}{f_{j}}\right)_{\chi}^{M_{i,j}}\prod_{i\ge1}\left(\frac{f_{i}'}{f_{i}}\right)_{\chi}^{M_{i,i}}e\left(\frac{hf_{1}}{f_{s+1}\cdots f_{m}}\right)\\
 & =\prod_{j>i>1}\left(\frac{f_{i}}{f_{j}}\right)_{\chi}^{M_{i,j}}\prod_{i\ge1}\left(\frac{f_{i}'}{f_{i}}\right)_{\chi}^{M_{i,i}}\sum_{h\in\F_{q}[T]/f_{s+1}\cdots f_{m}}\prod_{i\ge s+1}\left(\frac{h}{f_{i}}\right)_{\chi}^{M_{1,i}}e\left(\frac{hf_{1}}{f_{s+1}\cdots f_{m}}\right)\\
 & =\prod_{j>i>1}\left(\frac{f_{i}}{f_{j}}\right)_{\chi}^{M_{i,j}}\prod_{i\ge1}\left(\frac{f_{i}'}{f_{i}}\right)_{\chi}^{M_{i,i}}\sum_{h\in\F_{q}[T]/f_{s+1}\cdots f_{m}}\prod_{i\ge s+1}\left(\frac{hf_{1}^{-1}}{f_{i}}\right)_{\chi}^{M_{1,i}}e\left(\frac{h}{f_{s+1}\cdots f_{m}}\right)\\
 & =\prod_{j>i>1}\left(\frac{f_{i}}{f_{j}}\right)_{\chi}^{M_{i,j}}\prod_{i\ge1}\left(\frac{f_{i}'}{f_{i}}\right)_{\chi}^{M_{i,i}}\prod_{i\ge s}\left(\frac{f_{1}}{f_{i}}\right)_{\chi}^{-M_{1,i}}\sum_{h\in\F_{q}[T]/f_{s+1}\cdots f_{m}}\prod_{i\ge s+1}\left(\frac{h}{f_{i}}\right)_{\chi}^{M_{1,i}}e\left(\frac{h}{f_{s+1}\cdots f_{m}}\right).
\end{align*}
Write $h=h_{s+1}f_{s+2}\cdots f_{m}+\cdots+h_{m}f_{s+1}\cdots f_{m-1}.$
Then, 
\begin{align*}
 & \sum_{h\in\F_{q}[T]/f_{s+1}\cdots f_{m}}\prod_{i\ge s+1}\left(\frac{h}{f_{i}}\right)_{\chi}^{M_{1,i}}e\left(\frac{h}{f_{s+1}\cdots f_{m}}\right)\\
 & =\sum_{h\in\F_{q}[T]/f_{s+1}\cdots f_{m}}\prod_{i\ge s+1}\left(\frac{h_{s+1}f_{s+2}\cdots f_{m}+\cdots+h_{m}f_{s+1}\cdots f_{m-1}}{f_{i}}\right)_{\chi}^{M_{1,i}}e\left(\frac{h_{s+1}}{f_{s+1}}+\cdots+\frac{h_{m}}{f_{m}}\right)\\
 & =\left(\sum_{h_{s+1}\in\F_{q}[T]/f_{s+1}}\left(\frac{h_{s+1}f_{s+2}\cdots f_{m}}{f_{s+1}}\right)_{\chi}^{M_{1,s+1}}e\left(\frac{h_{s+1}}{f_{s+1}}\right)\right)\cdots\left(\sum_{h_{m}\in\F_{q}[T]/f_{m}}\left(\frac{h_{m}f_{s+1}\cdots f_{m-1}}{f_{m}}\right)_{\chi}^{M_{1,m}}e\left(\frac{h_{m}}{f_{m}}\right)\right)\\
 & =\prod_{i\ne j\ge s+1}\left(\frac{f_{j}}{f_{i}}\right)_{\chi}^{M_{1,i}}\prod_{i\ge s+1}G\left(\chi^{M_{1,i}},\psi\right)^{\deg f_{i}}(-1)^{\frac{\deg f_{i}(\deg f_{i}-1)(q-1)}{4}}\left(\frac{f_{i}'}{f_{i}}\right)_{\chi}^{M_{1,i}}\left(\frac{f_{i}'}{f_{i}}\right)_{\chi^{n/2}},
\end{align*}
where in the last line we use Lemma 2.4 of \cite{sawin_general} (and
the fact that $n$ is even). 

We have 
\begin{align*}
 & \sum_{h\in\mathcal{M}_{\deg f_{s+1}\cdots f_{m}}}a\left(h,f_{2},\ldots,f_{m},;M\right)e\left(\frac{hf_{1}}{f_{s+1}\cdots f_{m}}\right)\\
 & =\prod_{j>i>1}\left(\frac{f_{i}}{f_{j}}\right)_{\chi}^{M_{i,j}}\prod_{i=1}^{s}\left(\frac{f_{i}'}{f_{i}}\right)_{\chi}^{M_{i,i}}\prod_{i\ge s+1}\left(\frac{f_{i}'}{f_{i}}\right)_{\chi}^{M_{i,j}+M_{1,i}+\frac{n}{2}}\prod_{i\ge s+1}\left(\frac{f_{1}}{f_{i}}\right)_{\chi}^{-M_{1,i}}\prod_{i\ne j\ge s+1}\left(\frac{f_{j}}{f_{i}}\right)_{\chi}^{M_{1,i}}\\
 & \qquad\cdot\prod_{i\ge s+1}G\left(\chi^{M_{1,i}},\psi\right)^{\deg f_{i}}\prod_{i\ge s+1}(-1)^{\frac{\deg f_{i}(\deg f_{i}-1)(q-1)}{4}}\\
 & =\frac{\prod_{j>i>1,i\le s}\left(\frac{f_{i}}{f_{j}}\right)_{\chi}^{M_{i,j}}\prod_{j>i>s}\left(\frac{f_{i}}{f_{j}}\right)_{\chi}^{M_{i,j}+M_{1,i}+M_{1,j}}\prod_{i=1}^{s}\left(\frac{f_{i}'}{f_{i}}\right)_{\chi}^{M_{i,i}}\prod_{i>s}\left(\frac{f_{i}'}{f_{i}}\right)_{\chi}^{M_{i,i}+M_{1,i}+\frac{n}{2}}\prod_{i>s}\left(\frac{f_{1}}{f_{i}}\right)_{\chi}^{-M_{1,i}}}{\fudge\left(d_{\ge s+1};M_{1,\ge s+1}\right)},
\end{align*}
so the result follows.
\end{proof}
Next, we use the density trick in Lemma 4.6 of \cite{sawin_general}
to remove the assumption ``$\left(f_{i},f_{i}'\right)=1$ for $i\ge1$
and $\left(f_{i},f_{j}\right)=1$ for $(i,j)\not\in\left\{ (1,2),\ldots,(1,m)\right\} $''.
The general idea is to express both sides as trace functions of simple
perverse sheaves. Then, by Lemma \ref{lem:extraassumptionbeforedensity},
these trace functions agree on a dense open subset, which forces the
two perverse sheaves to in fact be the same.
\begin{lem}
\label{lem:relationshipfordegreebigger}Suppose $\deg f_{1}\ge\deg f_{s+1}\cdots f_{m}$,
i.e. $d_{1}\ge d_{s+1}+\cdots+d_{m}$. Then,
\[
a\left(f_{1},\ldots,f_{m};M'\right)=\fudge\left(d_{\ge s+1};M_{1,\ge s+1}\right)\sum_{h\in\mathcal{M}_{d_{s+1}+\cdots+d_{m}}}a\left(h,f_{2},\ldots,f_{m};M\right)e\left(\frac{hf_{1}}{f_{s+1}\cdots f_{m}}\right).
\]
\end{lem}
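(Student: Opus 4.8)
The plan is to run the density argument from the proof of Proposition~4.4 of \cite{sawin_general}: realize both sides of the claimed identity as trace functions of (shifts of) simple perverse sheaves on $Y\coloneqq\prod_{i=1}^{m}\A^{d_{i}}$, use Lemma~\ref{lem:extraassumptionbeforedensity} to see that these trace functions agree on a dense open subset, and conclude that the two perverse sheaves are isomorphic, whence the trace functions agree at every point --- which is the asserted equality.

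For the left-hand side this is immediate: $a\left(f_{1},\ldots,f_{m};M'\right)$ is, by definition, the trace function of $\mathcal{A}\coloneqq K_{d_{1},\ldots,d_{m}}$ formed with the matrix $M'$, and $\mathcal{A}[\dim Y]$ is the intermediate extension of $\mathcal{L}_{\chi}\left(F_{d_{1},\ldots,d_{m}}\right)[\dim Y]$ (for $M'$) from the dense open set $U'\subset Y$ on which that $F$ is invertible; in particular $\mathcal{A}$ is a shift of a simple perverse sheaf with full support. For the right-hand side, put $e\coloneqq d_{s+1}+\cdots+d_{m}$ and work on $Z\coloneqq\A^{e}\times Y$, regarding the first factor as the moduli space $\mathcal{M}_{e}$ of an auxiliary monic polynomial $h$. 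Since $f_{s+1}\cdots f_{m}$ is monic, polynomial division shows that $\left(h,f_{1},\ldots,f_{m}\right)\mapsto\Res\left(hf_{1}/(f_{s+1}\cdots f_{m})\right)$ defines a morphism $Z\to\A^{1}$ that is $\F_{q}$-linear in $h$ for each fixed value of the remaining variables. Let $\mathcal{B}_{0}$ be the tensor product on $Z$ of the pullback of $K_{e,d_{2},\ldots,d_{m}}$ (formed with $M$) along $\left(h,f_{2},\ldots,f_{m}\right)$ with the Artin--Schreier sheaf $\mathcal{L}_{\psi}$ of that morphism, and let $\mathrm{pr}\colon Z\to Y$ be the projection forgetting $h$. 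Then proper base change and the Grothendieck--Lefschetz trace formula show that $\mathcal{B}\coloneqq\fudge\left(d_{\ge s+1};M_{1,\ge s+1}\right)\cdot R\mathrm{pr}_{!}\mathcal{B}_{0}$ has trace function exactly the right-hand side.

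For the comparison on a dense open, let $W\subset Y$ be the open locus on which the hypotheses of Lemma~\ref{lem:extraassumptionbeforedensity} hold, namely $\left(f_{i},f_{i}'\right)=1$ for all $i$ and $\left(f_{i},f_{j}\right)=1$ for $(i,j)\notin\left\{(1,2),\ldots,(1,m)\right\}$; this is nonempty, hence dense since $Y$ is irreducible, and so is $V_{0}\coloneqq W\cap U'$. On $V_{0}$ the sheaf $\mathcal{A}$ restricts to the lisse rank-one sheaf $\mathcal{L}_{\chi}\left(F_{d_{1},\ldots,d_{m}}\right)$ (for $M'$), and Lemma~\ref{lem:extraassumptionbeforedensity}, applied over every finite extension $\F_{q^{\nu}}$, shows that the trace functions of $\mathcal{A}$ and of $\mathcal{B}$ agree on $V_{0}\left(\F_{q^{\nu}}\right)$ for all $\nu$.

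The step I expect to be the main obstacle is showing that $\mathcal{B}$, after an appropriate shift, is again a \emph{simple} perverse sheaf with full support. Pulling $K_{e,d_{2},\ldots,d_{m}}$ back along the smooth projection $Z\to\A^{e}\times\prod_{i\ge2}\A^{d_{i}}$ and tensoring with the lisse rank-one $\mathcal{L}_{\psi}$ keeps it simple perverse on $Z$; since $\mathrm{pr}$ is affine, $R\mathrm{pr}_{!}\mathcal{B}_{0}$ (suitably shifted) automatically lies in ${}^{p}D^{\le 0}(Y)$, so the real content is the reverse bound ${}^{p}D^{\ge 0}$. I would obtain it as in the theory of the Fourier--Deligne transform: the $\F_{q}$-linearity of $h\mapsto\Res\left(hf_{1}/(f_{s+1}\cdots f_{m})\right)$ makes $\mathcal{L}_{\psi}$ of the residue morphism the pullback of the standard Fourier kernel on $\A^{e}\times(\A^{e})^{\vee}$ along $\mathrm{id}\times\phi$, where $\phi\colon Y\to(\A^{e})^{\vee}$ sends $\left(f_{1},\ldots,f_{m}\right)$ to the linear form $h\mapsto\Res\left(hf_{1}/(f_{s+1}\cdots f_{m})\right)$; this form is surjective on every fibre of $\mathrm{pr}$ away from the closed locus $\left\{f_{s+1}\cdots f_{m}\mid f_{1}\right\}$ of codimension $e$, so by Laumon's $t$-exactness of the (relative) Fourier transform $R\mathrm{pr}_{!}\mathcal{B}_{0}$ is, up to shift, perverse there --- indeed simple, since the Fourier transform of a simple perverse sheaf is simple --- while over the degeneracy locus the residue vanishes identically and the fibrewise sum degenerates to $\sum_{h}a\left(h,f_{2},\ldots,f_{m};M\right)$, so any extra perverse constituents supported there are ruled out by a dimension estimate for the cohomology of the affine fibres. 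Granting this, the restriction of $\mathcal{B}$ to $V_{0}$ is a nonzero lisse rank-one sheaf with the same trace function as $\mathcal{A}|_{V_{0}}$, hence isomorphic to it by Chebotarev; and since $\mathcal{A}$ and $\mathcal{B}$ (both suitably shifted) are then the intermediate extension from $V_{0}$ of this common lisse sheaf, they are isomorphic. In particular their trace functions coincide at every point of $Y$, which is precisely the claim.
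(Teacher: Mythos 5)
Your overall strategy --- realize both sides as trace functions of simple perverse sheaves via a Fourier-transform construction, check equality on a dense open using Lemma~\ref{lem:extraassumptionbeforedensity}, and conclude by simplicity and full support --- is exactly the paper's, which in turn defers the technical verification to Proposition 4.4 of \cite{sawin_general}. The trace-function computation for $\mathcal{B}$ and the Chebotarev step are fine.

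The step you yourself flag as the main obstacle, showing that the shifted $R\mathrm{pr}_!\mathcal{B}_0$ is a simple perverse sheaf of full support, is where your argument has a genuine gap, and the diagnosis you give is off. You cannot invoke Laumon's $t$-exactness directly for $R\mathrm{pr}_!$, since $\mathrm{pr}\colon\A^{e}\times Y\to Y$ is not the projection to a dual vector bundle. The clean version of what you are reaching for is: by base change in the square relating $\A^{e}\times Y$ to $\A^{e}\times(\A^{e})^{\vee}\times\prod_{i\ge2}\A^{d_{i}}$, one has $R\mathrm{pr}_!\mathcal{B}_{0}\cong\Phi^{*}\mathcal{F}_{\psi}\left(K_{e,d_{2},\ldots,d_{m};M}\right)$ up to shift, where $\Phi\colon Y\to(\A^{e})^{\vee}\times\prod_{i\ge2}\A^{d_{i}}$ records the linear form $h\mapsto\Res\left(hf_{1}/(f_{s+1}\cdots f_{m})\right)$ together with $\left(f_{2},\ldots,f_{m}\right)$. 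Now $\mathcal{F}_{\psi}$ of a simple perverse sheaf is simple perverse by \cite{Katz_Laumon}, and the remaining point is that $\Phi$ is smooth with geometrically connected fibers. But $\Phi$ is smooth \emph{everywhere}, including on the locus $\left\{ f_{s+1}\cdots f_{m}\mid f_{1}\right\}$: the hypothesis $d_{1}\ge e$ and the perfectness of the residue pairing $\mathcal{P}_{<e}\times\mathcal{P}_{<e}\to\A^{1}$ against a monic denominator of degree $e$ make the differential $\delta f_{1}\mapsto\left(h\mapsto\Res\left(h\,\delta f_{1}/(f_{s+1}\cdots f_{m})\right)\right)$ surjective at every point, independently of whether the linear form itself vanishes there. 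So your distinction between a ``nondegenerate'' open and a ``degeneracy locus,'' and the proposed ``dimension estimate for the cohomology of the affine fibres'' over the latter, are a red herring; once base change and smooth pullback are in hand, there is nothing extra to rule out. (Full support then follows from your final observation that $\mathcal{B}|_{V_{0}}\cong\mathcal{A}|_{V_{0}}\ne0$, so that part is fine.)
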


\begin{proof}
The argument is more-or-less identical to that of Lemma 4.6 of \cite{sawin_general},
so we will only sketch the argument. Let $d=d_{s+1}+\cdots+d_{m}$.
Recall the $\ell$-adic Fourier transform (\cite{Katz_Laumon}): Let
$p_{13}$ and $p_{23}$ be the two projections $\A^{d}\times\A^{d}\times\A^{d}\to\A^{d}\times\A^{d}$
and $\mu\colon\A^{d}\times\A^{d}\times\A^{d}\to\A^{1}$ be the dot
product of the first two factors. Then, the Fourier transform $\mathcal{F}_{\psi}\left(-\right)$
is defined as $p_{13!}\left(p_{23}^{*}\left(-\right)\otimes\mu^{*}\mathcal{L}_{\psi}\right)\left[d\right]$. 

Let $\sigma\colon\A^{d_{1}}\times\cdots\times\A^{d_{s}}\times\A^{d}\to\A^{d_{1}}\times\cdots\times\A^{d_{s}}\times\A^{d}$
be the morphism sending $\left(f_{1},\ldots,f_{m}\right)$ to 
\[
\left(t^{d}+\Res\left(\frac{t^{d-1}f_{1}}{f_{s+1}\cdots f_{m}}\right)t^{d_{1}-1}+\cdots+\Res\left(\frac{f_{1}}{f_{s+1}\cdots f_{m}}\right),f_{2},\ldots,f_{s},f_{s+1},\ldots,f_{m}\right)
\]
and $\alpha\colon\A^{d_{1}}\times\cdots\times\A^{d_{s}}\times\A^{d}\to\A^{1}$
is the morphism sending $\left(f_{1},\ldots,f_{m}\right)$ to $\Res\left(\frac{t^{d}f_{1}}{f_{s+1}\cdots f_{m}}\right)$. 

Then, the proof of Proposition 4.4 of \cite{sawin_general} shows
the trace function of 
\[
\sigma^{*}\mathcal{F}_{\psi}K_{d_{1},\ldots,d_{m};M}\otimes\alpha^{*}\mathcal{L}_{\psi},
\]
where the extra subscript in $K_{d_{1},\ldots,d_{m};M}$ denotes the
dependence on the matrix $M$, is
\[
(-1)^{d}\sum_{h\in\mathcal{M}_{d_{s+1}+\cdots+d_{m}}}a\left(h,f_{2},\ldots,f_{m};M\right)e\left(\frac{hf_{1}}{f_{s+1}\cdots f_{m}}\right).
\]
By the Hasse--Davenport relations, after multiplying by a factor
of $(-1)^{d}$, $\fudge\left(d_{\ge s+1};M_{1,\ge s+1}\right)$ is
a compatible system of Weil numbers, so there is a lisse rank one
sheaf $\mathcal{L}_{G}$ on $\Spec\F_{p}$ such that the trace function
of 
\[
\sigma^{*}\mathcal{F}_{\psi}K_{d_{1},\ldots,d_{m};M}\otimes\alpha^{*}\mathcal{L}_{\psi}\otimes\mathcal{L}_{G}
\]
is 
\[
\fudge\left(d_{\ge s+1};M_{1,\ge s+1}\right)\sum_{h\in\mathcal{M}_{d_{s+1}+\cdots+d_{m}}}a\left(h,f_{2},\ldots,f_{m};M\right)e\left(\frac{hf_{1}}{f_{s+1}\cdots f_{m}}\right).
\]
The proof of Proposition 4.4 of \cite{sawin_general} shows that the
sheaves $\sigma^{*}\mathcal{F}_{\psi}K_{d_{1},\ldots,d_{m};M}\otimes\alpha^{*}\mathcal{L}_{\psi}\otimes\mathcal{L}_{G}\left[d_{1}+\cdots+d_{m}\right]$
and $K_{d_{1},\ldots,d_{m};M'}\left[d_{1}+\cdots+d_{m}\right]$ are
simple perverse sheaves that have trace functions agreeing on the
dense open subset defined by $\left(f_{i},f_{i}'\right)=1$ for $i\ge1$
and $\left(f_{i},f_{j}\right)=1$ for $(i,j)\not\in\left\{ (1,2),\ldots,(1,m)\right\} $,
which, also by the proof of Proposition 4.4 of \cite{sawin_general},
implies they are isomorphic (in fact, they are both intermediate extensions
of the same lisse sheaf). The result follows.
\end{proof}
\begin{lem}
\label{lem:indepfordegreebigger}Suppose $\deg f,\deg g\ge\deg f_{s+1}\cdots f_{m}$
such that $f\equiv g\bmod f_{s+1}\cdots f_{m}$. Then, 
\[
a\left(f,f_{2},\ldots,f_{m};M\right)=a\left(g,f_{2},\ldots,f_{m};M\right).
\]
\begin{proof}
This follows from Lemma \ref{lem:relationshipfordegreebigger}. Indeed,
we have 
\begin{align*}
a\left(f,\ldots,f_{m};M\right) & =\fudge\left(d_{\ge s+1};M'_{1,\ge s+1}\right)\sum_{h\in\mathcal{M}_{d_{s+1}+\cdots+d_{m}}}a\left(h,f_{2},\ldots,f_{m};M'\right)e\left(\frac{hf}{f_{s+1}\cdots f_{m}}\right)\\
 & =\fudge\left(d_{\ge s+1};M'_{1,\ge s+1}\right)\sum_{h\in\mathcal{M}_{d_{s+1}+\cdots+d_{m}}}a\left(h,f_{2},\ldots,f_{m};M'\right)e\left(\frac{hg'}{f_{s+1}\cdots f_{m}}\right)\\
 & =a\left(g,\ldots,f_{m};M\right).
\end{align*}
\end{proof}
\end{lem}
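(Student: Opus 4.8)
The plan is to deduce the statement from Lemma \ref{lem:relationshipfordegreebigger}, which already carries all the hard geometry. That lemma writes an $a$-coefficient for the matrix $M'$ as a Fourier-type sum over $h$ of $a$-coefficients for $M$; to get at $a(f_1, f_2, \ldots, f_m; M)$ directly, I would apply it with the roles of $M$ and $M'$ interchanged. This is legitimate because $M'$ satisfies the same standing hypotheses imposed in this section: since $M'_{1,i} = -M_{1,i}$, the first $s$ entries of its first row still vanish, the remaining entries $M'_{1,s+1}, \ldots, M'_{1,m}$ are still nonzero in $\Z/n\Z$, we still have $s \ge 1$, and $n$ is still even; moreover $(M')' = M$ by the remark following the definition of $M'$. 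Applying Lemma \ref{lem:relationshipfordegreebigger} to $M'$ and using $(M')' = M$ thus yields, under the assumed inequality $\deg f_1 \ge \deg(f_{s+1}\cdots f_m)$,
\[
a(f_1, \ldots, f_m; M) = \fudge(d_{\ge s+1}; M'_{1,\ge s+1}) \sum_{h \in \mathcal{M}_{d_{s+1}+\cdots+d_m}} a(h, f_2, \ldots, f_m; M') \, e\!\left(\frac{h f_1}{f_{s+1}\cdots f_m}\right),
\]
together with the identical formula for $f_1'$ in place of $f_1$, using $\deg f_1' \ge \deg(f_{s+1}\cdots f_m)$. Note that $f_{s+1}, \ldots, f_m$ are held fixed, so $d_{s+1}, \ldots, d_m$, the fudge factor, and the index set of the sum are the same in both identities.

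It then remains to observe that the right-hand side depends on $f_1$ only through the quantities $e(h f_1/(f_{s+1}\cdots f_m))$, and that these are unchanged when $f_1$ is altered by a multiple of $f_{s+1}\cdots f_m$. Writing $f_1 - f_1' = c\, f_{s+1}\cdots f_m$ with $c \in \F_q[T]$, which exists by the assumed divisibility, for each $h$ we have
\[
\frac{h f_1}{f_{s+1}\cdots f_m} - \frac{h f_1'}{f_{s+1}\cdots f_m} = h c \in \F_q[T],
\]
and a polynomial contributes no $T^{-1}$ term to its Laurent expansion, so its residue vanishes; since $e(-) = \psi(\Res(-))$ and $\Res$ is additive, $e(h f_1/(f_{s+1}\cdots f_m)) = e(h f_1'/(f_{s+1}\cdots f_m))$ for every $h$. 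Substituting this into the two identities above makes their right-hand sides coincide, and therefore $a(f_1, f_2, \ldots, f_m; M) = a(f_1', f_2, \ldots, f_m; M)$.

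I do not anticipate a genuine obstacle here: all the substantive content — the density argument with simple perverse sheaves and the Fourier transform computation — is already packaged inside Lemma \ref{lem:relationshipfordegreebigger}. The only points needing a moment's attention are verifying that $M'$ inherits the section's standing hypotheses so that the lemma may be reapplied to it, and the elementary fact that polynomials are invisible to the residue pairing $e(-)$.
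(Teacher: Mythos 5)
Your proof is correct and follows essentially the same route as the paper: apply Lemma \ref{lem:relationshipfordegreebigger} with $M'$ in the role of $M$ (using $(M')' = M$) to express $a(f_1, \ldots, f_m; M)$ as the Fourier-type sum, observe that $e(hf_1/(f_{s+1}\cdots f_m)) = e(hf_1'/(f_{s+1}\cdots f_m))$ because $h(f_1 - f_1')/(f_{s+1}\cdots f_m)$ is a polynomial and thus has zero residue, and conclude. Your explicit verification that $M'$ inherits the section's standing hypotheses is a worthwhile sanity check that the paper leaves implicit, but it does not change the argument.
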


Finally, combining the above steps, we are able to prove the relationship
in full generality:
\begin{prop}
\label{prop:relationship}Let $d_{1},\ldots,d_{m}$ be non-negative
integers and $f_{i}\in\mathcal{M}_{d_{i}}$ for each $i$. Then, we
have 
\[
a\left(f_{1},\ldots,f_{m};M'\right)=\fudge\left(d_{\ge s+1};M_{1,\ge s+1}\right)\sum_{h\in\mathcal{M}_{d_{s+1}+\cdots+d_{m}}}a\left(h,f_{2},\ldots,f_{m};M\right)e\left(\frac{hf_{1}}{f_{s+1}\cdots f_{m}}\right).
\]
\begin{proof}
Choose $v$ relatively prime to $f_{1}$ such that $\deg v+\deg f_{1}\ge d_{s+1}+\cdots+d_{m}.$ 

For $h\in\mathcal{M}_{d_{s+1}+\cdots+d_{m}}$, by Lemma \ref{lem:indepfordegreebigger},
we have 
\[
a\left(hv^{-1},f_{2},\ldots,f_{m};M\right)=a\left(hv^{-1}\%f_{s+1}\cdots f_{m}+f_{s+1}\cdots f_{m},f_{2},\ldots,f_{m};M\right),
\]
where $f\%g$ means the remainder of $f$ after dividing by $g$ (so
that $\deg\left(f\%g\right)<\deg g$), since $hv^{-1}$ and $hv^{-1}\%f_{s+1}\cdots f_{m}+f_{s+1}\cdots f_{m}$
both have degree at least $d_{s+1}+\cdots+d_{m}$ and have the same
remainder modulo $f_{s+1}\cdots f_{m}$. 

Then,
\begin{align*}
 & \sum_{h\in\mathcal{M}_{d_{s+1}+\cdots+d_{m}}}a\left(hv^{-1},f_{2},\ldots,f_{m};M\right)e\left(\frac{hf_{1}}{f_{s+1}\cdots f_{m}}\right)\\
 & =\sum_{h\in\mathcal{M}_{d_{s+1}+\cdots+d_{m}}}a\left(hv^{-1}\%f_{s+1}\cdots f_{m}+f_{s+1}\cdots f_{m},f_{2},\ldots,f_{m};M\right)e\left(\frac{hf_{1}}{f_{s+1}\cdots f_{m}}\right).
\end{align*}
As $h$ runs over $\mathcal{M}_{d_{s+1}+\cdots+d_{m}}$, observe that
$hv^{-1}\%f_{s+1}\cdots f_{m}+f_{s+1}\cdots f_{m}$ also runs over
$\mathcal{M}_{d_{s+1}+\cdots+d_{m}}$ (because any two polynomials
of degree less than $f_{s+1}\cdots f_{m}$ that are also congruent
to each other modulo $f_{s+1}\cdots f_{m}$ must be the same). 

It follows that 
\[
\sum_{h\in\mathcal{M}_{d_{s+1}+\cdots+d_{m}}}a\left(hv^{-1},f_{2},\ldots,f_{m};M\right)e\left(\frac{hf_{1}}{f_{s+1}\cdots f_{m}}\right)=\sum_{h\in\mathcal{M}_{d_{s+1}+\cdots+d_{m}}}a\left(h,f_{2},\ldots,f_{m};M\right)e\left(\frac{hf_{1}v}{f_{s+1}\cdots f_{m}}\right).
\]
Combining this with Lemma \ref{lem:relationshipfordegreebigger},
we obtain 
\begin{align*}
a\left(f_{1}v,f_{2},\ldots,f_{m};M'\right) & =\fudge\left(d_{\ge s+1};M_{1,\ge s+1}\right)\sum_{h\in\mathcal{M}_{d_{s+1}+\cdots+d_{m}}}a\left(h,f_{2},\ldots,f_{m};M\right)e\left(\frac{hf_{1}v}{f_{s+1}\cdots f_{m}}\right)\\
 & =\fudge\left(d_{\ge s+1};M_{1,\ge s+1}\right)\sum_{h\in\mathcal{M}_{d_{s+1}+\cdots+d_{m}}}a\left(hv^{-1},f_{2},\ldots,f_{m};M\right)e\left(\frac{hf_{1}}{f_{s+1}\cdots f_{m}}\right).
\end{align*}
By applying twisted multiplicativity to $a\left(f_{1}v,f_{2},\ldots,f_{m};M'\right)$
and $a\left(hv^{-1},f_{2},\ldots,f_{m};M\right)$, we can write the
above as 
\begin{align*}
 & a\left(f_{1},\ldots,f_{m};M'\right)\left(\frac{v}{f_{s+1}}\right)_{\chi}^{-M_{1,s+1}}\cdots\left(\frac{v}{f_{m}}\right)_{\chi}^{-M_{1,m}}\\
 & =\fudge\left(d_{\ge s+1};M_{1,\ge s+1}\right)\sum_{h\in\mathcal{M}_{d_{s+1}+\cdots+d_{m}}}a\left(h,f_{2},\ldots,f_{m};M\right)\left(\frac{v^{-1}}{f_{s+1}}\right)_{\chi}^{M_{1,s+1}}\cdots\left(\frac{v^{-1}}{f_{m}}\right)_{\chi}^{M_{1,m}}e\left(\frac{hf_{1}}{f_{s+1}\cdots f_{m}}\right).
\end{align*}
Since $\left(\frac{v}{f_{s+1}}\right)_{\chi}^{-M_{1,s+1}}\cdots\left(\frac{v}{f_{m}}\right)_{\chi}^{-M_{1,m}}=\left(\frac{v^{-1}}{f_{s+1}}\right)_{\chi}^{M_{1,s+1}}\cdots\left(\frac{v^{-1}}{f_{m}}\right)_{\chi}^{M_{1,m}}\ne0$,
we may cancel them from both sides and the result follows.
\end{proof}
\end{prop}

\section{Derivation of functional equations}

We now have all the necessary tools to prove the first step of Theorem
\ref{thm:mainresult}, i.e. the functional equations as a formal equality
of power series. Let us continue using the notation from the previous
section. 

We divide up the analysis into two cases, depending on the triviality
of $\prod_{i=s+1}^{m}\left(\frac{-}{f_{i}}\right)_{\chi}^{M_{1,i}}$
on $\F_{q}^{\times}$ (much like in the single-variable situation).
These comprise the Subsections \ref{subsec:-is-trivial} and \ref{subsec:-is-not}.
We then combine these cases to obtain the functional equation in Subsection
\ref{subsec:Putting-everything-together}. Finally, as a quick verification,
in Subsection \ref{subsec:A-short-verification}, we show that our
functional equations match up with Whitehead's from his thesis (c.f.
\cite{whitehead_multiple}).

For fixed monic polynomials $f_{2},\ldots,f_{m}$ with $\deg f_{i}=d_{i}$,
let $d=d_{s+1}+\cdots+d_{m}$. Also, write 
\[
S_{t;f_{2},\ldots,f_{m};M}=\sum_{f\in\mathcal{M}_{t}}a\left(f,f_{2},\ldots,f_{m};M\right)
\]
and 
\[
P_{f_{2},\ldots,f_{m};M}\left(u_{1}\right)=\sum_{t\ge0}S_{t;f_{2},\ldots,f_{m};M}u_{1}^{t}.
\]

\begin{lem}
\label{lem:formulaforS}For $0\le t\le d$, we have 
\begin{align*}
S_{t;f_{2},\ldots,f_{m};M} & =\frac{q^{t-d}}{\fudge\left(d_{\ge s+1};M_{1,\ge s+1}\right)}a\left(f_{s}\cdots f_{m},f_{2},\ldots,f_{m};M'\right)\\
 & \qquad+\frac{q^{t-d}}{\fudge\left(d_{\ge s+1};M_{1,\ge s+1}\right)}\left[\sum_{k=0}^{d-t-2}\sum_{\lambda\in\F_{q}^{\times}}\prod_{i=s+1}^{m}\left(\frac{\lambda}{f_{i}}\right)_{\chi}^{M_{1,i}}\sum_{f\in\mathcal{M}_{k}}a\left(f,f_{2},\ldots,f_{m};M'\right)\right.\\
 & \qquad\left.+\sum_{\lambda\in\F_{q}^{\times}}\prod_{i=s+1}^{m}\left(\frac{\lambda}{f_{i}}\right)_{\chi}^{M_{1,i}}\psi(-\lambda)\sum_{f\in\mathcal{M}_{d-t-1}}a\left(f,f_{2},\ldots,f_{m};M'\right)\right].
\end{align*}
\begin{proof}
Observe that 
\begin{align*}
 & \sum_{f\in\mathcal{M}_{t}}a\left(f,f_{2},\ldots,f_{m};M\right)\\
 & =\sum_{f\in\mathcal{P}_{<t}}a\left(T^{t}+f,f_{2},\ldots,f_{m};M\right)\\
 & =q^{t-d}\sum_{f\in\F_{q}[T]/f_{s+1}\cdots f_{m}}a\left(T^{t}+f,f_{2},\ldots,f_{m};M\right)\sum_{h\in\mathcal{P}_{<d-t}}e\left(\frac{hf}{f_{s+1}\cdots f_{m}}\right)\\
 & =q^{t-d}\sum_{f\in\F_{q}[T]/f_{s+1}\cdots f_{m}}a\left(f,f_{2},\ldots,f_{m};M\right)\sum_{h\in\mathcal{P}_{<d-t}}e\left(\frac{hf-hT^{t}}{f_{s+1}\cdots f_{m}}\right)\\
 & =q^{t-d}\sum_{h\in\mathcal{P}_{<d-t}}e\left(\frac{-hT^{t}}{f_{s+1}\cdots f_{m}}\right)\sum_{f\in\F_{q}[T]/f_{s+1}\cdots f_{m}}a\left(f,f_{2},\ldots,f_{m};M\right)e\left(\frac{hf}{f_{s+1}\cdots f_{m}}\right).
\end{align*}
Then, using Proposition \ref{prop:relationship}, we have 
\begin{align*}
 & \frac{\fudge\left(d_{\ge s+1};M_{1,\ge s+1}\right)}{q^{t-d}}\sum_{f\in\mathcal{M}_{t}}a\left(f,f_{2},\ldots,f_{m};M\right)\\
 & =\sum_{h\in\mathcal{P}_{<d-t}}e\left(\frac{-hT^{t}}{f_{s+1}\cdots f_{m}}\right)a\left(h,f_{2},\ldots,f_{m};M'\right)\\
 & =a\left(f_{s+1}\cdots f_{m},f_{2},\ldots,f_{m};M'\right)+\sum_{k=0}^{d-t-1}\sum_{\lambda\in\F_{q}^{\times}}\sum_{f\in\mathcal{M}_{k}}e\left(\frac{-\lambda T^{t}f}{f_{s+1}\cdots f_{m}}\right)a\left(\lambda f,f_{2},\ldots,f_{m};M'\right)\\
 & =a\left(f_{s+1}\cdots f_{m},f_{2},\ldots,f_{m};M'\right)+\sum_{k=0}^{d-t-1}\sum_{\lambda\in\F_{q}^{\times}}\sum_{f\in\mathcal{M}_{k}}e\left(\frac{-\lambda T^{t}f}{f_{s+1}\cdots f_{m}}\right)a\left(f,f_{2},\ldots,f_{m};M'\right)\prod_{i=s+1}^{m}\left(\frac{\lambda}{f_{i}}\right)_{\chi}^{M_{1,i}}\\
 & =a\left(f_{s+1}\cdots f_{m},f_{2},\ldots,f_{m};M'\right)+\sum_{k=0}^{d-t-2}\sum_{\lambda\in\F_{q}^{\times}}\prod_{i=s+1}^{m}\left(\frac{\lambda}{f_{i}}\right)_{\chi}^{M_{1i}}\sum_{f\in\mathcal{M}_{k}}a\left(f,f_{2},\ldots,f_{m};M'\right)\\
 & \qquad+\sum_{\lambda\in\F_{q}^{\times}}\prod_{i=s+1}^{m}\left(\frac{\lambda}{f_{i}}\right)_{\chi}^{M_{1,i}}\psi(-\lambda)\sum_{f\in\mathcal{M}_{d-t-1}}a\left(f,f_{2},\ldots,f_{m};M'\right).
\end{align*}
\end{proof}
\end{lem}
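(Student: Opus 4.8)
The plan is to reduce the claim to Proposition~\ref{prop:relationship} by a ``Fourier detection of degree'' argument inside the finite ring $\F_{q}[T]/(f_{s+1}\cdots f_{m})$. Since $d=\deg(f_{s+1}\cdots f_{m})$, reduction modulo $f_{s+1}\cdots f_{m}$ identifies $\mathcal{M}_{d}$ bijectively with $\F_{q}[T]/(f_{s+1}\cdots f_{m})$, and by the corollary to Proposition~\ref{prop:relationship} the function $a(-,f_{2},\ldots,f_{m};M)$ factors through this quotient, as does $e(hg/(f_{s+1}\cdots f_{m}))$ in the variable $g$. First I would parametrize $\mathcal{M}_{t}$ as $\{T^{t}+g : g\in\mathcal{P}_{<t}\}$ and note that, for $g\in\mathcal{P}_{<d}$, a short Laurent-series computation gives $\sum_{h\in\mathcal{P}_{<d-t}}e(hg/(f_{s+1}\cdots f_{m}))=q^{d-t}$ when $\deg g<t$ (including $g=0$) and $0$ otherwise --- the point being that the coefficients of $T^{-1},\dots,T^{-(d-t)}$ in the Laurent expansion of $g/(f_{s+1}\cdots f_{m})$ all vanish precisely when $\deg g<t$, after which orthogonality of $\psi$ over $\F_{q}$ collapses the $h$-sum. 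This converts the defining sum over $\mathcal{M}_{t}$ into $q^{t-d}\sum_{g\in\F_{q}[T]/(f_{s+1}\cdots f_{m})}a(T^{t}+g,f_{2},\ldots,f_{m};M)\sum_{h\in\mathcal{P}_{<d-t}}e(hg/(f_{s+1}\cdots f_{m}))$.

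Next I would interchange the two sums, substitute $g\mapsto g-T^{t}$ (a bijection of the quotient), and factor out $e(-hT^{t}/(f_{s+1}\cdots f_{m}))$, leaving the inner sum $\sum_{g}a(g,f_{2},\ldots,f_{m};M)\,e(hg/(f_{s+1}\cdots f_{m}))$. Under the identification $\mathcal{M}_{d}\cong\F_{q}[T]/(f_{s+1}\cdots f_{m})$ this is exactly the sum appearing in Proposition~\ref{prop:relationship}: for $h$ monic it equals $a(h,f_{2},\ldots,f_{m};M')/\fudge(d_{\ge s+1};M_{1,\ge s+1})$; for $h=0$ I would read it as the class of $f_{s+1}\cdots f_{m}$, giving $a(f_{s+1}\cdots f_{m},f_{2},\ldots,f_{m};M')/\fudge(d_{\ge s+1};M_{1,\ge s+1})$ (using that the residue of a polynomial is $0$); and for $h=\lambda f$ with $\lambda\in\F_{q}^{\times}$ and $f$ monic, twisted multiplicativity peels the unit $\lambda$ off and produces the character $\prod_{i=s+1}^{m}\left(\frac{\lambda}{f_{i}}\right)_{\chi}^{M_{1,i}}$ times $a(f,f_{2},\ldots,f_{m};M')$. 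At this stage the identity reads $\fudge(d_{\ge s+1};M_{1,\ge s+1})\,q^{d-t}\,S_{t;f_{2},\ldots,f_{m};M}=\sum_{h\in\mathcal{P}_{<d-t}}e(-hT^{t}/(f_{s+1}\cdots f_{m}))\,a(h,f_{2},\ldots,f_{m};M')$.

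Finally I would evaluate the characters $e(-hT^{t}/(f_{s+1}\cdots f_{m}))$ by writing $h=\lambda f$ with $\lambda\in\F_{q}^{\times}$, $f\in\mathcal{M}_{k}$, $0\le k\le d-t-1$: the Laurent expansion of $\lambda T^{t}f/(f_{s+1}\cdots f_{m})$ has vanishing $T^{-1}$-coefficient when $t+k\le d-2$, so the character is $1$ there, whereas for $k=d-t-1$ the residue equals $-\lambda$ irrespective of the lower coefficients of $f$, so the character is $\psi(-\lambda)$. Adding the $h=0$ contribution $a(f_{s+1}\cdots f_{m},f_{2},\ldots,f_{m};M')$ and regrouping the remaining terms into those with $0\le k\le d-t-2$ and those with $k=d-t-1$ yields exactly the asserted formula. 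I expect the only real friction to be bookkeeping --- carefully justifying the passage between $\mathcal{P}_{<t}$, $\mathcal{P}_{<d}$ and $\F_{q}[T]/(f_{s+1}\cdots f_{m})$ via independence modulo $f_{s+1}\cdots f_{m}$ together with $\psi$-orthogonality, and the short residue computation in the last step --- since the genuinely substantive input, Proposition~\ref{prop:relationship}, is already in hand.
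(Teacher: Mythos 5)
Your proposal is correct and follows essentially the same route as the paper's proof: parametrize $\mathcal{M}_{t}$ by $T^{t}+\mathcal{P}_{<t}$, use the orthogonality relation $\sum_{h\in\mathcal{P}_{<d-t}}e(hg/(f_{s+1}\cdots f_{m}))=q^{d-t}\,[\deg g<t]$ to pass to a sum over $\F_{q}[T]/(f_{s+1}\cdots f_{m})$, interchange, shift by $T^{t}$, apply Proposition~\ref{prop:relationship}, then decompose $h=0$ and $h=\lambda f$ and evaluate the residue characters by degree. The only cosmetic difference is that you explicitly spell out the indicator computation and the $h=0$ case (correctly noting it should read $f_{s+1}\cdots f_{m}$, fixing a typo in the displayed statement), whereas the paper keeps these implicit.
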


For $t\ge d$, we have $S_{t+1;f_{2},\ldots,f_{m};M}=qS_{t;f_{2},\ldots,f_{m};M}$,
so 
\begin{align*}
S_{d;f_{2},\ldots,f_{m};M}u_{1}^{d}+S_{d+1;f_{2},\ldots,f_{m};M}u_{1}^{d+1}+\cdots & =S_{d;f_{2},\ldots,f_{m};M}u^{d}\left(1+qu_{1}+q^{2}u_{1}^{2}+\cdots\right)=\frac{S_{d;f_{2},\ldots,f_{m};M}u_{1}^{d}}{1-qu_{1}}.
\end{align*}
Hence, we have the following lemma.
\begin{lem}
\label{lem:expressionforP}
\begin{align*}
P_{f_{2},\ldots,f_{m};M}\left(u_{1}\right) & =\sum_{t=0}^{d-1}S_{t;f_{2},\ldots,f_{m};M}u_{1}^{t}+\frac{S_{d;f_{2},\ldots,f_{m};M}u_{1}^{d}}{1-qu_{1}}.
\end{align*}
\end{lem}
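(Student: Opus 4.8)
The plan is to split the defining power series for $P_{f_2,\ldots,f_m;M}(u_1)$ at the index $t=d$ and to resum the tail as a geometric series. By definition $P_{f_2,\ldots,f_m;M}(u_1)=\sum_{t\ge 0}S_{t;f_2,\ldots,f_m;M}\,u_1^{t}$, so
\[
P_{f_2,\ldots,f_m;M}(u_1)=\sum_{t=0}^{d-1}S_{t;f_2,\ldots,f_m;M}\,u_1^{t}+\sum_{t\ge d}S_{t;f_2,\ldots,f_m;M}\,u_1^{t},
\]
and the task reduces to identifying the second sum with $S_{d;f_2,\ldots,f_m;M}\,u_1^{d}/(1-qu_1)$.

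The one substantive input is the stabilization relation $S_{t+1;f_2,\ldots,f_m;M}=q\,S_{t;f_2,\ldots,f_m;M}$ for all $t\ge d$. I would deduce this from the corollary following Proposition~\ref{prop:relationship}: the function $a(-,f_2,\ldots,f_m;M)$ is independent modulo $f_{s+1}\cdots f_m$, a polynomial of degree $d=d_{s+1}+\cdots+d_m$. Hence for $t\ge d$ each residue class modulo $f_{s+1}\cdots f_m$ is represented by exactly $q^{t-d}$ monic polynomials of degree $t$, giving
\[
S_{t;f_2,\ldots,f_m;M}=q^{t-d}\sum_{f\in\F_q[T]/f_{s+1}\cdots f_m}a(f,f_2,\ldots,f_m;M),
\]
and the recursion $S_{t+1}=qS_t$ follows at once.

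With this in hand the tail becomes a geometric series: $\sum_{t\ge d}S_{t;f_2,\ldots,f_m;M}\,u_1^{t}=S_{d;f_2,\ldots,f_m;M}\,u_1^{d}\sum_{k\ge 0}(qu_1)^{k}=\frac{S_{d;f_2,\ldots,f_m;M}\,u_1^{d}}{1-qu_1}$, valid as an identity of formal power series in $u_1$ (and as analytic functions where $|qu_1|<1$). Substituting into the splitting above yields the asserted formula. There is essentially no obstacle here; the only point requiring a word of care is that the equality is being asserted at the level of formal power series in $u_1$ --- which is precisely the setting of this ``first step'' in the derivation of the functional equations --- so that the resummation of the infinite geometric series is unambiguously legitimate.
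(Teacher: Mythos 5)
Your proof is correct and takes essentially the same approach as the paper: split the series at $t=d$, observe that $S_{t+1;f_2,\ldots,f_m;M}=qS_{t;f_2,\ldots,f_m;M}$ for $t\ge d$, and resum the tail as a geometric series. The one thing you do that the paper doesn't bother to spell out is justify the stabilization relation from the corollary to Proposition~\ref{prop:relationship} (independence of $a(-,f_2,\ldots,f_m;M)$ modulo $f_{s+1}\cdots f_m$), which is a welcome bit of extra care but not a different route.
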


\begin{lem}
\label{lem:differenceofS}Let $t\ge0$. Then, 
\begin{align*}
 & qS_{t;f_{2},\ldots,f_{m};M}-S_{t+1;f_{2},\ldots,f_{m};M}\\
 & =\frac{q^{t+1-d}}{\fudge\left(d_{\ge s+1};M_{1,\ge s+1}\right)}\left[\sum_{\lambda\in\F_{q}^{\times}}\prod_{i=s+1}^{m}\left(\frac{\lambda}{f_{i}}\right)_{\chi}^{M_{1,i}}\sum_{f\in\mathcal{M}_{d-t-2}}a(\left(f,f_{2},\ldots,f_{m};M'\right)\right.\\
 & \qquad\left.+\sum_{\lambda\in\F_{q}^{\times}}\prod_{i=s+1}^{m}\left(\frac{\lambda}{f_{i}}\right)_{\chi}^{M_{1,i}}\psi(-\lambda)\left(\sum_{f\in\mathcal{M}_{d-t-1}}a\left(f,f_{2},\ldots,f_{m};M'\right)-\sum_{f\in\mathcal{M}_{d-t-2}}a\left(f,f_{2},\ldots,f_{m};M'\right)\right)\right].
\end{align*}
\begin{proof}
For $t\ge d$, both sides are zero by the observation above. For $0\le t\le d-1$,
this follows directly from Lemma \ref{lem:formulaforS}.
\end{proof}
\end{lem}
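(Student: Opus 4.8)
The plan is to read the identity off directly from the closed formula for $S_{t;f_2,\ldots,f_m;M}$ furnished by Lemma \ref{lem:formulaforS}, after splitting into the ranges $t\ge d$ and $0\le t\le d-1$. The only genuine content is an index shift $t\mapsto t+1$ together with a telescoping of two partial sums.

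First I would handle $t\ge d$. By the observation recorded just before Lemma \ref{lem:expressionforP}, $S_{t+1;f_2,\ldots,f_m;M}=q\,S_{t;f_2,\ldots,f_m;M}$ for $t\ge d$, so the left-hand side vanishes; on the right-hand side every sum is indexed by $\mathcal{M}_{d-t-1}$ or $\mathcal{M}_{d-t-2}$, and since $d-t-1<0$ these index sets are empty, so the right-hand side vanishes too. Then, for $0\le t\le d-1$, I would apply Lemma \ref{lem:formulaforS} to both $S_{t;f_2,\ldots,f_m;M}$ (valid, as $t\le d$) and $S_{t+1;f_2,\ldots,f_m;M}$ (valid, as $t+1\le d$). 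Multiplying the first formula by $q$, both expressions acquire the common prefactor $q^{t+1-d}/\fudge\left(d_{\ge s+1};M_{1,\ge s+1}\right)$. Subtracting, the constant term $a\left(f_{s+1}\cdots f_m,f_2,\ldots,f_m;M'\right)$ cancels; the partial sums $\sum_{k=0}^{d-t-2}$ and $\sum_{k=0}^{d-(t+1)-2}=\sum_{k=0}^{d-t-3}$ telescope to the single index $k=d-t-2$ (an empty, hence zero, contribution when $d-t-2<0$); and the two $\psi(-\lambda)$-weighted terms combine into the difference $\bigl(\sum_{f\in\mathcal{M}_{d-t-1}}-\sum_{f\in\mathcal{M}_{d-t-2}}\bigr)a(f,f_2,\ldots,f_m;M')$. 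Collecting the three surviving pieces gives precisely the asserted expression.

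No step is really an obstacle here; the calculation is entirely formal. The only points deserving a moment's attention are verifying that the index ranges are in the scope of Lemma \ref{lem:formulaforS} before invoking it, and checking that the boundary values $t=d-1$ and $t\ge d$ are handled consistently by the convention that a sum over $\mathcal{M}_{j}$ with $j<0$ is empty. Everything else is bookkeeping, so I would keep the written proof short, essentially reducing it to "subtract the two instances of Lemma \ref{lem:formulaforS} and telescope."
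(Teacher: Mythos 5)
Your proposal is correct and takes essentially the same route as the paper: the paper's proof is just the compressed version ("for $t\ge d$ both sides vanish by the observation; for $0\le t\le d-1$ subtract the two instances of Lemma \ref{lem:formulaforS}"), and your detailed telescoping of the $\sum_{k=0}^{d-t-2}$ and $\sum_{k=0}^{d-t-3}$ partial sums together with the cancellation of the constant term $a(f_{s+1}\cdots f_m, f_2,\ldots,f_m;M')$ is exactly the computation the authors are alluding to.
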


We now split our analysis into two cases, depending on whether or
not $\prod_{i=s+1}^{m}\left(\frac{-}{f_{i}}\right)_{\chi}^{M_{1,i}}$
is trivial on \textbf{$\F_{q}^{\times}$}. 

\subsection{\label{subsec:-is-trivial}$\prod_{i=s+1}^{m}\left(\frac{-}{f_{i}}\right)_{\chi}^{M_{1,i}}$
is trivial on $\protect\F_{q}^{\times}$ }

We have $\prod_{i=s+1}^{m}\left(\frac{\lambda}{f_{i}}\right)_{\chi}^{M_{1,i}}=1$
and $\sum_{\lambda\in\F_{q}^{\times}}\psi(-\lambda)=-1$.
\begin{lem}
\label{lem:pre-functional}For all $t$, we have 
\[
qS_{t;f_{2},\ldots,f_{m};M}-S_{t+1;f_{2},\ldots,f_{m};M}=\frac{q^{t+1-d}}{\fudge\left(d_{\ge s+1};M_{1,\ge s+1}\right)}\left[qS_{d-t-2;f_{2},\ldots,f_{m};M'}-S_{d-t-1;f_{2},\ldots,f_{m};M'}\right].
\]
\begin{proof}
If $d=0$, then both sides are 0, so we may assume $d\ge1$. 

For $t\ge0$, by Lemma \ref{lem:differenceofS}, we have 
\begin{align*}
 & qS_{t;f_{2},\ldots,f_{m};M}-S_{t+1;f_{2},\ldots,f_{m};M}\\
 & =\frac{q^{t+1-d}}{\fudge\left(d_{\ge s+1};M_{1,\ge s+1}\right)}\left[q\sum_{f\in\mathcal{M}_{d-t-2}}a\left(f,f_{2},\ldots,f_{m};M'\right)-\sum_{f\in\mathcal{M}_{d-t-1}}a\left(f,f_{2},\ldots,f_{m};M'\right)\right]\\
 & =\frac{q^{t+1-d}}{\fudge\left(d_{\ge s+1};M_{1,\ge s+1}\right)}\left[qS_{d-t-2;f_{2},\ldots,f_{m};M'}-S_{d-t-1;f_{2},\ldots,f_{m};M'}\right].
\end{align*}
For $t\le-1$, since $d-t-2\ge0$ and $\prod_{i=s+1}^{m}\left(\frac{-}{f_{i}}\right)_{\chi}^{M_{1,i}'}$
is also trivial on $\F_{q}^{\times}$, we obtain
\[
qS_{d-t-2;f_{2},\ldots,f_{m};M'}-S_{d-t-1;f_{2},\ldots,f_{m};M'}=\frac{q^{-t-1}}{\fudge\left(d_{\ge s+1};M'_{1,\ge s+1}\right)}\left[qS_{t;f_{2},\ldots,f_{m};M}-S_{t+1;f_{2},\ldots,f_{m};M}\right],
\]
which implies
\begin{align*}
qS_{t;f_{2},\ldots,f_{m};M}-S_{t+1;f_{2},\ldots,f_{m};M} & =\frac{q^{t+1-d}}{\fudge\left(d_{\ge s+1};M_{1,\ge s+1}\right)}\left[qS_{d-t-2;f_{2},\ldots,f_{m};M'}-S_{d-t-1;f_{2},\ldots,f_{m};M'}\right],
\end{align*}
as desired.
\end{proof}
\end{lem}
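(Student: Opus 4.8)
The identity is a reflection symmetry in the pair $(t,M)\leftrightarrow(d-t-2,M')$, mirroring the single-variable functional-equation computation, and the plan is to read it off Lemma~\ref{lem:differenceofS} once the two character sums occurring there are evaluated under the standing hypothesis of this subsection, and then to bootstrap from $t\ge 0$ to all integers $t$ via that symmetry. After disposing of the degenerate case $d=0$ (where all the sums that appear collapse, so both sides vanish), assume $d\ge 1$.

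First I would handle $t\ge 0$ directly. Lemma~\ref{lem:differenceofS} expresses $q\,S_{t;f_2,\ldots,f_m;M}-S_{t+1;f_2,\ldots,f_m;M}$ as $q^{t+1-d}/\fudge(d_{\ge s+1};M_{1,\ge s+1})$ times a bracket built from $\sum_{f\in\mathcal{M}_{d-t-2}}a(f,f_2,\ldots,f_m;M')$ and $\sum_{f\in\mathcal{M}_{d-t-1}}a(f,f_2,\ldots,f_m;M')$, weighted by $\sum_{\lambda\in\F_q^\times}\prod_{i=s+1}^m\bigl(\tfrac{\lambda}{f_i}\bigr)_\chi^{M_{1,i}}$ and $\sum_{\lambda\in\F_q^\times}\prod_{i=s+1}^m\bigl(\tfrac{\lambda}{f_i}\bigr)_\chi^{M_{1,i}}\psi(-\lambda)$. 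Since $\bigl(\tfrac{\lambda}{f_i}\bigr)_\chi=\chi(\lambda)^{d_i}$, the assumption that $\prod_{i=s+1}^m\bigl(\tfrac{-}{f_i}\bigr)_\chi^{M_{1,i}}$ is trivial on $\F_q^\times$ makes the first sum equal $q-1$ and the second equal $\sum_{\lambda\in\F_q^\times}\psi(-\lambda)=-1$. Substituting these and combining the two $\sum_{f\in\mathcal{M}_{d-t-2}}$ contributions (with total weight $(q-1)+1=q$), the bracket collapses to $q\,S_{d-t-2;f_2,\ldots,f_m;M'}-S_{d-t-1;f_2,\ldots,f_m;M'}$, which is precisely the claim for $t\ge 0$.

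For $t\le -1$ I would deduce the statement from the case just proved. The left-hand side is $0$ when $t\le -2$ and $-S_{0;f_2,\ldots,f_m;M}$ when $t=-1$, because $\mathcal{M}_t=\emptyset$. On the other hand $d-t-2\ge d-1\ge 0$, so the $t\ge 0$ case applies with $(d-t-2,M')$ in place of $(t,M)$ — legitimately, since $M'$ also satisfies the triviality hypothesis ($\chi^{M'_{1,i}}=\overline{\chi^{M_{1,i}}}$) and $(M')'=M$ — yielding $q\,S_{d-t-2;f_2,\ldots,f_m;M'}-S_{d-t-1;f_2,\ldots,f_m;M'}=\dfrac{q^{-t-1}}{\fudge(d_{\ge s+1};M'_{1,\ge s+1})}\bigl(q\,S_{t;f_2,\ldots,f_m;M}-S_{t+1;f_2,\ldots,f_m;M}\bigr)$. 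Rearranging this into the asserted form requires $\fudge(d_{\ge s+1};M_{1,\ge s+1})\,\fudge(d_{\ge s+1};M'_{1,\ge s+1})=q^{-d}$, which I would check straight from the definition of $\fudge$: the relation $G(\chi^a,\psi)\,G(\chi^{-a},\psi)=\chi(-1)^a q$ (applicable because each $M_{1,i}\not\equiv 0\bmod n$, so $\chi^{M_{1,i}}$ is nontrivial) makes the product of denominators $\chi(-1)^{\sum_{i\ge s+1}d_iM_{1,i}}q^d$, the product of the sign factors equals $(-1)^{\sum_{i\ge s+1}d_i(d_i-1)(q-1)/2}=1$ since $q$ is odd, and $\chi(-1)^{\sum_{i\ge s+1}d_iM_{1,i}}=1$ is exactly the triviality hypothesis evaluated at $\lambda=-1$.

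The rest is routine algebra. I expect the only genuinely delicate point to be this $\fudge$-product identity used in the reflection step: it is precisely where the oddness of $q$, the triviality hypothesis, and the existence of $M'$ (hence the evenness of $n$) all enter, and getting the powers of $-1$ and of $\chi(-1)$ to cancel is the one computation that has to be carried out with care.
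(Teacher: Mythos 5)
Your argument is correct and follows the paper's proof exactly: handle $t\ge 0$ directly from Lemma~\ref{lem:differenceofS} using that the triviality hypothesis makes $\sum_\lambda\prod_i\bigl(\tfrac{\lambda}{f_i}\bigr)_\chi^{M_{1,i}}=q-1$ and $\sum_\lambda\prod_i\bigl(\tfrac{\lambda}{f_i}\bigr)_\chi^{M_{1,i}}\psi(-\lambda)=-1$, then deduce $t\le -1$ by applying the $t\ge 0$ case at $(d-t-2,M')$ and rearranging. The one step the paper leaves tacit in that rearrangement --- the identity $\fudge(d_{\ge s+1};M_{1,\ge s+1})\,\fudge(d_{\ge s+1};M'_{1,\ge s+1})=q^{-d}$, which you verify via $G(\chi^a,\psi)G(\chi^{-a},\psi)=\chi(-1)^aq$ together with the triviality hypothesis at $\lambda=-1$ --- is correctly worked out.
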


We then obtain the following functional equation.
\begin{prop}
\label{prop:goodlocalfunc}
\begin{align*}
P_{f_{2},\ldots,f_{m};M}\left(u_{1}\right)\left(qu_{1}-1\right) & =\frac{1}{\fudge\left(d_{\ge s+1};M_{1,\ge s+1}\right)}u_{1}^{d-1}\left(1-u_{1}\right)P_{f_{2},\ldots,f_{m};M'}\left(\frac{1}{qu_{1}}\right).
\end{align*}
\begin{proof}
Using Lemmas \ref{lem:expressionforP} and \ref{lem:pre-functional},
we have 
\begin{align*}
 & P_{f_{2},\ldots,f_{m};M}\left(u_{1}\right)\left(qu_{1}-1\right)\\
 & =\left(qu_{1}-1\right)\sum_{t=0}^{d-1}S_{t;f_{2},\ldots,f_{m};M}u_{1}^{t}-S_{d;f_{2},\ldots,f_{m};M}u_{1}^{d}\\
 & =\sum_{t=0}^{d-1}\left(qS_{t;f_{2},\ldots,f_{m};M}u_{1}^{t+1}-S_{t;f_{2},\ldots,f_{m};M}u_{1}^{t}\right)-S_{d;f_{2},\ldots,f_{m};M}u_{1}^{d}\\
 & =\left(qS_{-1;f_{2},\ldots,f_{m};M}-S_{0;f_{2},\ldots,f_{m};M}\right)+\cdots+\left(qS_{d-1;f_{2},\ldots,f_{m};M}u_{1}^{d}-S_{d;f_{2},\ldots,f_{m};M}u_{1}^{d}\right)\\
 & =\frac{\left(q^{1-d}S_{d-1;f_{2},\ldots,f_{m};M'}-q^{-d}S_{d;f_{2},\ldots,f_{m};M'}\right)+\cdots+\left(qS_{-1;f_{2},\ldots,f_{m};M'}-S_{0;f_{2},\ldots,f_{m};M'}\right)u_{1}^{d}}{\fudge\left(d_{\ge s+1};M_{1,\ge s+1}\right)}\\
 & =\frac{1}{\fudge\left(d_{\ge s+1};M_{1,\ge s+1}\right)}\left[\left(qS_{-1;f_{2},\ldots,f_{m};M'}u_{1}^{d}+\cdots+q^{2-d}S_{d-2;f_{2},\ldots,f_{m};M'}u_{1}+q^{1-d}S_{d-1;f_{2},\ldots,f_{m};M'}\right)\right.\\
 & \qquad\left.-\left(S_{0;f_{2},\ldots,f_{m};M'}u_{1}^{d}+\cdots+q^{1-d}S_{d-1;f_{2},\ldots,f_{m};M'}u_{1}+q^{-d}S_{d;f_{2},\ldots,f_{m};M'}\right)\right]\\
 & =\frac{1}{\fudge\left(d_{\ge s+1};M_{1,\ge s+1}\right)}\left[\left(u_{1}^{d-1}-u_{1}^{d}\right)P_{f_{2},\ldots,f_{m};M'}\left(\frac{1}{qu_{1}}\right)\right]\\
 & =\frac{1}{\fudge\left(d_{\ge s+1};M_{1,\ge s+1}\right)}u_{1}^{d-1}\left(1-u_{1}\right)P_{f_{2},\ldots,f_{m};M'}\left(\frac{1}{qu_{1}}\right),
\end{align*}
as desired.
\end{proof}
\end{prop}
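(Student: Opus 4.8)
The plan is to verify the identity directly by expanding both sides as rational functions of $u_1$ and comparing coefficients, using only the two bookkeeping facts already at hand: Lemma \ref{lem:expressionforP}, which writes $P_{f_2,\ldots,f_m;M}(u_1)$ as the degree-$(d-1)$ polynomial $\sum_{t=0}^{d-1}S_{t;f_2,\ldots,f_m;M}u_1^t$ plus the geometric tail $\frac{S_{d;f_2,\ldots,f_m;M}u_1^d}{1-qu_1}$, and Lemma \ref{lem:pre-functional}, which in the present (trivial-character) case reads $qS_{t;\ldots;M}-S_{t+1;\ldots;M}=\frac{q^{t+1-d}}{\fudge(d_{\ge s+1};M_{1,\ge s+1})}\bigl(qS_{d-t-2;\ldots;M'}-S_{d-t-1;\ldots;M'}\bigr)$ for every integer $t$ (here I abbreviate the subscript $f_2,\ldots,f_m$ by dots).

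First I would multiply $P_{f_2,\ldots,f_m;M}(u_1)$ by $qu_1-1$. The tail contributes $(qu_1-1)\cdot\frac{S_{d;\ldots;M}u_1^d}{1-qu_1}=-S_{d;\ldots;M}u_1^d$, so the product is an honest polynomial of degree $d$; regrouping $(qu_1-1)\sum_{t=0}^{d-1}S_{t;\ldots;M}u_1^t-S_{d;\ldots;M}u_1^d$ into telescoping blocks shows that its coefficient of $u_1^t$ for $0\le t\le d$ equals $qS_{t-1;\ldots;M}-S_{t;\ldots;M}$, with the convention $S_{t;\ldots;M}=0$ for $t<0$ and the $-S_{d;\ldots;M}u_1^d$ from the tail completing the top block. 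I would then apply Lemma \ref{lem:pre-functional} with $t$ replaced by $t-1$---which is legitimate because that lemma was proved for all $t\in\Z$, with $t\ge d$ vacuous and $t<0$ reduced, via triviality of $\prod_{i=s+1}^{m}\left(\tfrac{-}{f_i}\right)_\chi^{M'_{1,i}}$ on $\F_q^{\times}$, to the same lemma for $M'$---to rewrite each coefficient as $\frac{q^{t-d}}{\fudge(d_{\ge s+1};M_{1,\ge s+1})}\bigl(qS_{d-t-1;\ldots;M'}-S_{d-t;\ldots;M'}\bigr)$; re-indexing $t\mapsto d-t$ then gives $(qu_1-1)P_{f_2,\ldots,f_m;M}(u_1)=\frac{1}{\fudge(d_{\ge s+1};M_{1,\ge s+1})}\sum_{s=0}^{d}q^{-s}\bigl(qS_{s-1;\ldots;M'}-S_{s;\ldots;M'}\bigr)u_1^{d-s}$.

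Finally I would expand the right-hand side: by Lemma \ref{lem:expressionforP} applied to $M'$, $u_1^{d-1}P_{f_2,\ldots,f_m;M'}(\tfrac{1}{qu_1})$ equals $\sum_{s=0}^{d-1}S_{s;\ldots;M'}q^{-s}u_1^{d-1-s}+\frac{q^{-d}S_{d;\ldots;M'}}{u_1-1}$, so after multiplying by $1-u_1$ (the tail collapsing to the constant $-q^{-d}S_{d;\ldots;M'}$) one again obtains an honest degree-$d$ polynomial in $u_1$, and a single shift of summation index shows it equals $\sum_{s=0}^{d}q^{-s}\bigl(qS_{s-1;\ldots;M'}-S_{s;\ldots;M'}\bigr)u_1^{d-s}$---exactly $\fudge(d_{\ge s+1};M_{1,\ge s+1})$ times the expression obtained for the left-hand side. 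The whole argument is routine bookkeeping; the one place calling for genuine care is the interface between the polynomial parts and the geometric tails, so I would check explicitly that the $u_1^d$-coefficient on the left (which mixes $qS_{d-1;\ldots;M}$ from the polynomial part with $-S_{d;\ldots;M}$ from the tail) matches the constant term on the right (coming entirely from the $M'$-tail), that being where an off-by-one slip would be easiest.
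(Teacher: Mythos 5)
Your proposal is correct and follows essentially the same route as the paper: both expand $P_{f_2,\ldots,f_m;M}(u_1)(qu_1-1)$ using Lemma \ref{lem:expressionforP}, note that the geometric tail collapses to $-S_{d;f_2,\ldots,f_m;M}u_1^d$, identify the coefficient of $u_1^t$ as the telescoping difference $qS_{t-1;M}-S_{t;M}$, apply Lemma \ref{lem:pre-functional} (valid for all integers $t$) to pass to the $M'$ sums, and then re-index to match $u_1^{d-1}(1-u_1)P_{f_2,\ldots,f_m;M'}(1/(qu_1))$. Your coefficient-by-coefficient bookkeeping, including the explicit check at the boundary terms $u_1^d$ and $u_1^0$, is a slightly more systematic presentation of the same calculation.
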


\subsection{\label{subsec:-is-not}$\prod_{i=s+1}^{m}\left(\frac{-}{f_{i}}\right)_{\chi}^{M_{1,i}}$
is not trivial on $\protect\F_{q}^{\times}$ }

Note that $\prod_{i=s+1}^{m}\left(\frac{\lambda}{f_{i}}\right)_{\chi}^{M_{1,i}}=\prod_{i=s+1}^{m}\chi(\lambda)^{d_{i}M_{1,i}}=\chi(\lambda)^{\sum_{i=s+1}^{m}d_{i}M_{1,i}}$.
\begin{lem}
\label{lem:a_is_0}$a\left(f_{s+1}\cdots f_{m},f_{2},\ldots,f_{m};M'\right)=0$.
\begin{proof}
By Proposition \ref{prop:relationship}, we have 
\begin{align*}
a\left(f_{s+1}\cdots f_{m},f_{2},\ldots,f_{m};M'\right) & =\fudge\left(d_{\ge s+1};M_{1,\ge s+1}\right)\sum_{h\in\mathcal{M}_{d_{s+1}+\cdots+d_{m}}}a\left(h,f_{2},\ldots,f_{m};M\right)e\left(\frac{hf_{s+1}\cdots f_{m}}{f_{s+1}\cdots f_{m}}\right)\\
 & =\fudge\left(d_{\ge s+1};M_{1,\ge s+1}\right)\sum_{h\in\mathcal{M}_{d_{s+1}+\cdots+d_{m}}}a\left(h,f_{2},\ldots,f_{m};M\right).
\end{align*}
Let $S=\sum_{h\in\mathcal{M}_{d_{s+1}+\cdots+d_{m}}}a\left(h,f_{2},\ldots,f_{m};M\right)$
and $v\in\F_{q}^{\times}$ such that $\prod_{i=s+1}^{m}\left(\frac{v}{f_{i}}\right)_{\chi}^{M_{1,i}}\ne1$.
Then, 
\[
\sum_{h\in\mathcal{M}_{d_{s+1}+\cdots+d_{m}}}a\left(h,f_{2},\ldots,f_{m};M\right)=\sum_{h\in\mathcal{M}_{d_{s+1}+\cdots+d_{m}}}a\left(h\left(v+f_{s+1}\cdots f_{m}\right),f_{2},\ldots,f_{m};M\right),
\]
since we are summing over a full set of representatives. 

Then, by twisted multiplicativity, we have 
\begin{align*}
S & =\sum_{h\in\mathcal{M}_{d_{s+1}+\cdots+d_{m}}}a\left(h,f_{2},\ldots,f_{m};M\right)\\
 & =\sum_{h\in\mathcal{M}_{d_{s+1}+\cdots+d_{m}}}a\left(h\left(v+f_{s+1}\cdots f_{m}\right),f_{2},\ldots,f_{m};M\right)\\
 & =a\left(v+f_{s+1}\cdots f_{m},1,\ldots,1;M\right)\left(\frac{v+f_{s+1}\cdots f_{m}}{f_{s+1}}\right)_{\chi}^{M_{1,s+1}}\cdots\left(\frac{v+f_{s+1}\cdots f_{m}}{f_{m}}\right)_{\chi}^{M_{1,m}}S\\
 & =\prod_{i=s+1}^{m}\left(\frac{v}{f_{i}}\right)_{\chi}^{M_{1,i}}S.
\end{align*}
So $S=0$, which means $a\left(f_{s+1}\cdots f_{m},f_{2},\ldots,f_{m};M'\right)=0$. 
\end{proof}
\end{lem}

\begin{lem}
\label{lem:pre-functional2}For all $t$, we have 
\[
S_{t;f_{2},\ldots,f_{m};M}=\frac{q^{t-d}\chi(-1)^{\sum_{i=s+1}^{m}d_{i}M_{1,i}}G\left(\chi^{\sum_{i=s+1}^{m}d_{i}M_{1,i}},\psi\right)}{\fudge\left(d_{\ge s+1};M_{1,\ge s+1}\right)}S_{d-t-1;f_{2},\ldots,f_{m};M'}.
\]
\begin{proof}
By Proposition \ref{prop:relationship}, we have 
\begin{align*}
S_{d;f_{2},\ldots,f_{m};M} & =\sum_{f\in\F_{q}[T]/f_{s+1}\cdots f_{m}}a\left(f,f_{2},\ldots,f_{m};M\right)\\
 & =\frac{1}{\fudge\left(d_{\ge s+1};M_{1,\ge s+1}\right)}a\left(f_{s+1}\cdots f_{m},f_{2},\ldots,f_{m};M'\right),
\end{align*}
and by Lemma \ref{lem:a_is_0}, this expression is zero. Since $S_{t+1;f_{2},\ldots,f_{m};M}=qS_{t;f_{2},\ldots,f_{m};M}$
for $t\ge d$, it follows that the statement of the lemma holds for
$t\ge d$ and $t\le-1$. 

So assume $0\le t\le d-1$. By \ref{lem:formulaforS}, we have 
\begin{align*}
S_{t;f_{2},\ldots,f_{m};M} & =\frac{q^{t-d}}{\fudge\left(d_{\ge s+1};M_{1,\ge s+1}\right)}\left[\sum_{k=0}^{d-t-2}\sum_{\lambda\in\F_{q}^{\times}}\prod_{i=s+1}^{m}\left(\frac{\lambda}{f_{i}}\right)_{\chi}^{M_{1,i}}\sum_{f\in\mathcal{M}_{k}}a\left(f,f_{2},\ldots,f_{m};M'\right)\right.\\
 & \qquad\left.+\sum_{\lambda\in\F_{q}^{\times}}\prod_{i=s+1}^{m}\left(\frac{\lambda}{f_{i}}\right)_{\chi}^{M_{1,i}}\psi(-\lambda)\sum_{f\in\mathcal{M}_{d-t-1}}a\left(f,f_{2},\ldots,f_{m};M'\right)\right].\\
 & =\frac{q^{t-d}}{\fudge\left(d_{\ge s+1};M_{1,\ge s+1}\right)}\left[\sum_{\lambda\in\F_{q}^{\times}}\prod_{i=s+1}^{m}\left(\frac{\lambda}{f_{i}}\right)_{\chi}^{M_{1,i}}\psi(-\lambda)\sum_{f\in\mathcal{M}_{d-t-1}}a\left(f,f_{2},\ldots,f_{m};M'\right)\right]\\
 & =\frac{q^{t-d}}{\fudge\left(d_{\ge s+1};M_{1,\ge s+1}\right)}\left[\chi(-1)^{\sum_{i=s+1}^{m}d_{i}M_{1,i}}G\left(\chi^{\sum_{i=s+1}^{m}d_{i}M_{1,i}},\psi\right)S_{d-t-1;f_{2},\ldots,f_{m};M'}\right],
\end{align*}
as desired.
\end{proof}
\end{lem}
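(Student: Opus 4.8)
The plan is to substitute the closed form of Lemma~\ref{lem:formulaforS} and collapse it using two standard character-sum identities over $\F_q^\times$. As recorded just before the statement, $\prod_{i=s+1}^{m}\left(\frac{\lambda}{f_i}\right)_\chi^{M_{1,i}}=\chi(\lambda)^{\sum_{i=s+1}^{m}d_iM_{1,i}}$, so the hypothesis of this subsection says exactly that $\chi^{e}$ is a \emph{non-trivial} character of $\F_q^\times$, where we abbreviate $e=\sum_{i=s+1}^{m}d_iM_{1,i}$; equivalently $e\not\equiv 0\bmod n$.

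First I would dispose of the ranges $t\ge d$ and $t\le -1$. Applying Proposition~\ref{prop:relationship} with the first polynomial taken to be $f_{s+1}\cdots f_m$ (so that $e(h)=\psi(\Res h)=1$ for every $h\in\mathcal{M}_d$) and using that $a(-,f_2,\ldots,f_m;M)$ is independent modulo $f_{s+1}\cdots f_m$, one obtains
\[
S_{d;f_2,\ldots,f_m;M}=\frac{1}{\fudge\left(d_{\ge s+1};M_{1,\ge s+1}\right)}\,a\left(f_{s+1}\cdots f_m,f_2,\ldots,f_m;M'\right),
\]
and the character-sum argument inside the proof of Lemma~\ref{lem:non-trivialindep} forces this $a$-coefficient to be $0$. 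The same reasoning with $M$ replaced by $M'$ (the exponent becomes $-e$, still $\not\equiv 0$) gives $S_{d;f_2,\ldots,f_m;M'}=0$ as well. Combined with the recursion $S_{t+1}=qS_t$ for $t\ge d$ and with $S_t=0$ for $t<0$, both sides of the asserted identity vanish whenever $t\ge d$ or $t\le -1$.

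For $0\le t\le d-1$ I would plug in Lemma~\ref{lem:formulaforS} and treat its three terms in turn: the leading term $a\left(f_{s+1}\cdots f_m,f_2,\ldots,f_m;M'\right)$ vanishes just as above; the inner sum $\sum_{\lambda\in\F_q^\times}\chi(\lambda)^{e}$ occurring in the middle block is $0$ by orthogonality of the non-trivial character $\chi^{e}$, so that whole block drops out; and in the last term the substitution $\lambda\mapsto-\lambda$ in $\sum_{\lambda\in\F_q^\times}\chi(\lambda)^{e}\psi(-\lambda)$ pulls out $\chi(-1)^{e}$ and leaves the Gauss sum $G\left(\chi^{e},\psi\right)$, while $\sum_{f\in\mathcal{M}_{d-t-1}}a(f,f_2,\ldots,f_m;M')=S_{d-t-1;f_2,\ldots,f_m;M'}$. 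Assembling these and retaining the prefactor $q^{t-d}/\fudge\left(d_{\ge s+1};M_{1,\ge s+1}\right)$ from Lemma~\ref{lem:formulaforS} yields precisely the claimed identity.

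I do not anticipate a genuine obstacle: the substantive work is already encapsulated in Lemmas~\ref{lem:formulaforS} and~\ref{lem:non-trivialindep}, and what remains is the recognition of two elementary sums (character orthogonality, and a Gauss sum with the sign $\chi(-1)^{e}$). The only points requiring a little care are the bookkeeping for the out-of-range cases $t\ge d$ and $t\le -1$, and ensuring that the vanishing of $a\left(f_{s+1}\cdots f_m,f_2,\ldots,f_m;M'\right)$ is legitimately extracted from the \emph{proof} of Lemma~\ref{lem:non-trivialindep} rather than merely its statement.
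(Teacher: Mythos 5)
Your proof is correct and follows essentially the same route as the paper: use Proposition~\ref{prop:relationship} together with the vanishing argument from the proof of Lemma~\ref{lem:non-trivialindep} to kill the boundary terms, then apply Lemma~\ref{lem:formulaforS} for $0\le t\le d-1$, dropping the middle block by orthogonality of $\chi^e$ and recognizing the last block as $\chi(-1)^eG(\chi^e,\psi)$. Your treatment of the out-of-range cases is slightly more explicit than the paper's (you note $S_{d;M'}=0$ as well, which is what makes the $t\le -1$ case go through), but the argument is the same.
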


We then obtain the following functional equation.
\begin{prop}
\label{prop:badlocalfunc}
\[
P_{f_{2},\ldots,f_{m};M}(u_{1})=\frac{\chi(-1)^{\sum_{i=s+1}^{m}d_{i}M_{1,i}}G\left(\chi^{\sum_{i=s+1}^{m}d_{i}M_{1,i}},\psi\right)}{q\fudge\left(d_{\ge s+1};M_{1,\ge s+1}\right)}u_{1}^{d-1}P_{f_{2},\ldots,f_{m};M'}\left(\frac{1}{qu_{1}}\right).
\]
\begin{proof}
Indeed, we have 
\begin{align*}
u_{1}^{d-1}P_{f_{2},\ldots,f_{m};M'}\left(\frac{1}{qu_{1}}\right) & =u_{1}^{d-1}\left(\sum_{t=0}^{d-1}S_{t;f_{2},\ldots,f_{m};M'}q^{-t}u_{1}^{-t}+\frac{S_{d;f_{2},\ldots,f_{m};M'}q^{-d}u_{1}^{-d}}{1-u_{1}^{-1}}\right)\\
 & =\sum_{t=0}^{d-1}S_{t;f_{2},\ldots,f_{m};M'}q^{-t}u_{1}^{d-t-1}.
\end{align*}
So, by Lemma \ref{lem:pre-functional2} and changing variable ($t\mapsto d-t-1$),
we obtain
\begin{align*}
P_{f_{2},\ldots,f_{m};M}(u_{1}) & =\sum_{t=0}^{d-1}\frac{q^{t-d}\chi(-1)^{\sum_{i=s+1}^{m}d_{i}M_{1,i}}G\left(\chi^{\sum_{i=s+1}^{m}d_{i}M_{1,i}},\psi\right)}{\fudge\left(d_{\ge s+1};M_{1,\ge s+1}\right)}S_{d-t-1;f_{2},\ldots,f_{m};M'}u_{1}^{t}\\
 & =\frac{\chi(-1)^{\sum_{i=s+1}^{m}d_{i}M_{1,i}}G\left(\chi^{\sum_{i=s+1}^{m}d_{i}M_{1,i}},\psi\right)}{\fudge\left(d_{\ge s+1};M_{1,\ge s+1}\right)}\sum_{t=0}^{d-1}q^{-1}q^{-t}S_{t;f_{2},\ldots,f_{m};M'}u_{1}^{d-t-1}\\
 & =\frac{\chi(-1)^{\sum_{i=s+1}^{m}d_{i}M_{1i}}G\left(\chi^{\sum_{i=s+1}^{m}d_{i}M_{1,i}},\psi\right)}{q\fudge\left(d_{\ge s+1};M_{1,\ge s+1}\right)}u_{1}^{d-1}P_{f_{2},\ldots,f_{m};M'}\left(\frac{1}{qu_{1}}\right),
\end{align*}
as desired.
\end{proof}
\end{prop}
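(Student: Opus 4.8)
The plan is to feed the pointwise identity of Lemma \ref{lem:pre-functional2} into the generating-function formula of Lemma \ref{lem:expressionforP} and recognize the resulting polynomial. Write $c=\sum_{i=s+1}^{m}d_{i}M_{1,i}$, so that the hypothesis of this subsection says $\chi^{c}$ is non-trivial on $\F_{q}^{\times}$. First I would record that in this case both of the ``degree-$d$'' sums vanish: $S_{d;f_{2},\ldots,f_{m};M}=0$ and $S_{d;f_{2},\ldots,f_{m};M'}=0$. The first is precisely the $t\ge d$ instance of Lemma \ref{lem:pre-functional2} (equivalently, it is what is shown inside that lemma's proof, via Proposition \ref{prop:relationship} together with the vanishing argument from the proof of Lemma \ref{lem:non-trivialindep}); the second follows the same way, because $M'_{1,i}=-M_{1,i}$, so $\prod_{i\ge s+1}\left(\tfrac{-}{f_{i}}\right)_{\chi}^{M'_{1,i}}$ is the inverse of $\prod_{i\ge s+1}\left(\tfrac{-}{f_{i}}\right)_{\chi}^{M_{1,i}}$ and hence also non-trivial on $\F_{q}^{\times}$, and $(M')'=M$. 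Consequently, applying Lemma \ref{lem:expressionforP} once for $M$ and once for $M'$ (the latter evaluated at $1/(qu_{1})$), the geometric tails disappear and both sides become honest polynomials in $u_{1}$:
\[
P_{f_{2},\ldots,f_{m};M}(u_{1})=\sum_{t=0}^{d-1}S_{t;f_{2},\ldots,f_{m};M}\,u_{1}^{t},\qquad u_{1}^{d-1}P_{f_{2},\ldots,f_{m};M'}\!\left(\tfrac{1}{qu_{1}}\right)=\sum_{t=0}^{d-1}q^{-t}S_{t;f_{2},\ldots,f_{m};M'}\,u_{1}^{d-1-t}.
\]

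Next I would substitute $S_{t;f_{2},\ldots,f_{m};M}=\dfrac{q^{t-d}\chi(-1)^{c}G\!\left(\chi^{c},\psi\right)}{\fudge\left(d_{\ge s+1};M_{1,\ge s+1}\right)}\,S_{d-t-1;f_{2},\ldots,f_{m};M'}$ (Lemma \ref{lem:pre-functional2}) into the first polynomial, pull the constant out of the sum, and reindex by $t\mapsto d-1-t$. The power of $q$ turns into $q^{(d-1-t)-d}=q^{-1-t}$ and the power of $u_{1}$ into $u_{1}^{d-1-t}$, so that, comparing with the second display above, the reindexed sum is exactly $u_{1}^{d-1}P_{f_{2},\ldots,f_{m};M'}(1/(qu_{1}))$; the leftover $q^{-1}$ is precisely the factor of $q$ in the denominator of the asserted formula, which completes the identification.

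I do not expect a real obstacle here: once Lemma \ref{lem:pre-functional2} is available, everything is a formal manipulation of finite sums. The one step deserving a moment's care is the treatment of the two tails $S_{d;\ldots;M}u_{1}^{d}/(1-qu_{1})$ and $S_{d;\ldots;M'}(qu_{1})^{-d}/(1-(qu_{1})^{-1})$: one must verify that the coefficients $S_{d;\ldots;M}$ and $S_{d;\ldots;M'}$ genuinely vanish in the non-trivial case (not merely that the two tails happen to cancel), which is what makes both sides polynomials and lets the identity be read off coefficient by coefficient.
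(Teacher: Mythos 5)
Your proof is correct and follows the same route as the paper: plug Lemma \ref{lem:pre-functional2} into the expression from Lemma \ref{lem:expressionforP}, observe that the geometric tails drop out, and reindex the resulting finite sum. The one thing you do more explicitly than the paper is to justify why both $S_{d;f_2,\ldots,f_m;M}$ and $S_{d;f_2,\ldots,f_m;M'}$ vanish (the paper silently drops the tail in $u_1^{d-1}P_{M'}(1/(qu_1))$, a step that indeed requires $S_{d;f_2,\ldots,f_m;M'}=0$); your argument that the subsection hypothesis for $M$ transfers to $M'$ because $M'_{1,i}=-M_{1,i}$ and $(M')'=M$ is exactly the right reason, and making it explicit is a small improvement in rigor over the paper's exposition.
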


\subsection{\label{subsec:Putting-everything-together}Putting everything together}

We now complete the first step of the proof of Theorem \ref{thm:mainresult}.

Let $\zeta_{n}$ be a primitive $n$th root of unity, e.g. $\zeta_{n}=e^{2\pi i/n}$. 

Recall the multiple Dirichlet series
\begin{align*}
L\left(u_{1},\ldots,u_{m};M\right) & =\sum_{d_{2},\ldots,d_{m}\ge0}\sum_{f_{2}\in\mathcal{M}_{d_{2}},\ldots,f_{m}\in\mathcal{M}_{d_{m}}}\sum_{t\ge0}\sum_{f\in\mathcal{M}_{t}}a\left(f,f_{2},\ldots,f_{m};M\right)u_{1}^{t}u_{2}^{d_{2}}\cdots u_{m}^{d_{m}}\\
 & =\sum_{d_{2},\ldots,d_{m}\ge0}\sum_{f_{2}\in\mathcal{M}_{d_{2}},\ldots,f_{m}\in\mathcal{M}_{d_{m}}}P_{f_{2},\ldots,f_{m};M}\left(u_{1}\right)u_{2}^{d_{2}}\cdots u_{m}^{d_{m}}
\end{align*}
and 
\begin{align*}
 & L_{\fudge}\left(u_{1},\ldots,u_{m};M\right)\\
 & =\sum_{f_{1},\ldots,f_{s}\in\F_{q}[t]^{+}}\sum_{d_{s+1},\ldots,d_{m}}b\left(d_{\ge s+1};M_{1,\ge s+1}\right)\sum_{f_{s+1}\in\mathcal{M}_{d_{s+1}},\ldots,f_{m}\in\mathcal{M}_{d_{m}}}a\left(f_{1},\ldots,f_{m};M'\right)u_{1}^{d_{1}}\cdots u_{m}^{d_{m}}.
\end{align*}
Note that the ``roots-of-unity filter'' picks out only the terms
$u_{1}^{d_{1}}\cdots u_{m}^{d_{m}}$ such that $\sum_{i=s+1}^{m}d_{i}M_{1,i}$
is divisible by $n$:
\begin{align*}
 & \frac{1}{n}\sum_{0\le j\le n-1}L\left(u_{1},\ldots,u_{s},\zeta_{n}^{jM_{1,s+1}}u_{s+1},\ldots,\zeta_{n}^{jM_{1,m}}u_{m};M\right)\\
 & =\sum_{\underset{n|\sum_{i=s+1}^{m}d_{i}M_{1,i}}{d_{2},\ldots,d_{m}}}\sum_{f_{2}\in\mathcal{M}_{d_{2}},\ldots,f_{m}\in\mathcal{M}_{d_{m}}}P_{f_{2},\ldots,f_{m};M}\left(u_{1}\right)u_{2}^{d_{2}}\cdots u_{m}^{d_{m}}.
\end{align*}
So, by Proposition \ref{prop:badlocalfunc}, we have 
\begin{align*}
 & L\left(u_{1},\ldots,u_{m};M\right)-\frac{1}{n}\sum_{0\le j\le n-1}L\left(u_{1},\ldots,u_{s},\zeta_{n}^{jM_{1,s+1}}u_{s+1},\ldots,\zeta_{n}^{jM_{1,m}}u_{m};M\right)\\
 & =\sum_{\underset{n\nmid\sum_{i=s+1}^{m}d_{i}M_{1,i}}{d_{2},\ldots,d_{m}}}\sum_{f_{2}\in\mathcal{M}_{d_{2}},\ldots,f_{m}\in\mathcal{M}_{d_{m}}}P_{f_{2},\ldots,f_{m};M}\left(u_{1}\right)u_{2}^{d_{2}}\cdots u_{m}^{d_{m}}\\
 & =\sum_{\underset{n\nmid\sum_{i=s+1}^{m}d_{i}M_{1,i}}{d_{2},\ldots,d_{m}}}\sum_{f_{i}\in\mathcal{M}_{d_{i}}}\frac{\chi(-1)^{\sum_{i=s+1}^{m}d_{i}M_{1,i}}G\left(\chi^{\sum_{i=s+1}^{m}d_{i}M_{1,i}},\psi\right)}{q\fudge\left(d_{\ge s+1};M_{1,\ge s+1}\right)}u_{1}^{d-1}P_{f_{2},\ldots,f_{m};M'}\left(\frac{1}{qu_{1}}\right)u_{2}^{d_{2}}\cdots u_{m}^{d_{m}}\\
 & =\frac{1}{u_{1}}\sum_{\underset{n\nmid\sum_{i=s+1}^{m}d_{i}M_{1,i}}{d_{2},\ldots,d_{m}}}\sum_{f_{2}\in\mathcal{M}_{d_{2}},\ldots,f_{m}\in\mathcal{M}_{d_{m}}}\left(\frac{\chi(-1)^{\sum_{i=s+1}^{m}d_{i}M_{1,i}}G\left(\chi^{\sum_{i=s+1}^{m}d_{i}M_{1,i}},\psi\right)}{q^{1+\sum_{i=s+1}^{m}d_{i}/2}\fudge\left(d_{\ge s+1};M_{1,\ge s+1}\right)}P_{f_{2},\ldots,f_{m};M'}\left(\frac{1}{qu_{1}}\right)\right.\\
 & \qquad\left.\cdot\prod_{j=2}^{s}u_{j}^{d_{j}}\prod_{j=s+1}^{m}\left(q^{1/2}u_{1}u_{j}\right)^{d_{j}}\right)\\
 & =\frac{1}{u_{1}}L_{\fudge}\left(\frac{1}{qu_{1}},u_{2},\ldots u_{s},q^{1/2}u_{1}u_{s+1},\ldots,q^{1/2}u_{1}u_{m};M\right)\\
 & \qquad-\frac{1}{u_{1}}\frac{1}{n}\sum_{0\le j\le n-1}L_{\fudge}\left(\frac{1}{qu_{1}},u_{2},\ldots,u_{s},\zeta_{n}^{jM_{1,s+1}}q^{1/2}u_{1}u_{s+1},\ldots,\zeta_{n}^{jM_{1,m}}q^{1/2}u_{1}u_{m};M\right).
\end{align*}
Also, by Proposition \ref{prop:goodlocalfunc}, we have 
\begin{align*}
 & \frac{1}{n}\sum_{0\le j\le n-1}\left(qu_{1}-1\right)L\left(u_{1},\ldots,u_{s},\zeta_{n}^{jM_{1,s+1}}u_{s+1},\ldots,\zeta_{n}^{jM_{1,m}}u_{m};M\right)\\
 & =\sum_{\underset{n|\sum_{i=s+1}^{m}d_{i}M_{1,i}}{d_{2},\ldots,d_{m}}}\sum_{f_{2}\in\mathcal{M}_{d_{2}},\ldots,f_{m}\in\mathcal{M}_{d_{m}}}P_{f_{2},\ldots,f_{m};M}(u_{1})u_{2}^{d_{2}}\cdots u_{m}^{d_{m}}(qu_{1}-1)\\
 & =\sum_{\underset{n|\sum_{i=s+1}^{m}d_{i}M_{1,i}}{d_{2},\ldots,d_{m}}}\sum_{f_{i}\in\mathcal{M}_{d_{i}}}\frac{u_{1}^{d-1}\left(1-u_{1}\right)}{q^{\sum_{i=s+1}^{m}d_{i}/2}\fudge\left(d_{\ge s+1};M_{1,\ge s+1}\right)}P_{f_{2},\ldots,f_{m};M'}\left(\frac{1}{qu_{1}}\right)\prod_{j=2}^{s}u_{j}^{d_{j}}\prod_{j=s+1}^{m}\left(q^{1/2}u_{1}u_{j}\right)^{d_{j}}\\
 & =\frac{1}{n}\sum_{0\le j\le n-1}L_{\fudge}\left(\frac{1}{qu_{1}},u_{2},\ldots,u_{s},\zeta_{n}^{jM_{1,s+1}}q^{1/2}u_{1}u_{s+1},\ldots,\zeta_{n}^{jM_{1,m}}q^{1/2}u_{1}u_{m};M\right)\frac{1-u_{1}}{u_{1}}.
\end{align*}
Then, combining these two computations yields 
\begin{align*}
 & \frac{qu_{1}-1}{u_{1}}L_{\fudge}\left(\frac{1}{qu_{1}},u_{2},\ldots u_{s},q^{1/2}u_{1}u_{s+1},\ldots,q^{1/2}u_{1}u_{m};M\right)\\
 & \qquad-\frac{qu_{1}-1}{u_{1}}\frac{1}{n}\sum_{0\le j\le n-1}L_{\fudge}\left(\frac{1}{qu_{1}},u_{2},\ldots,u_{s},\zeta_{n}^{jM_{1,s+1}}q^{1/2}u_{1}u_{s+1},\ldots,\zeta_{n}^{jM_{1,m}}q^{1/2}u_{1}u_{m};M\right)\\
 & =\left(qu_{1}-1\right)L\left(u_{1},\ldots,u_{m};M\right)\\
 & \qquad+\left(1-\frac{1}{u_{1}}\right)\frac{1}{n}\sum_{0\le j\le n-1}L_{\fudge}\left(\frac{1}{qu_{1}},u_{2},\ldots,u_{s},\zeta_{n}^{jM_{1,s+1}}q^{1/2}u_{1}u_{s+1},\ldots,\zeta_{n}^{jM_{1,m}}q^{1/2}u_{1}u_{m};M\right).
\end{align*}
Re-arranging, we finally have 
\begin{align*}
 & u_{1}\left(qu_{1}-1\right)L\left(u_{1},\ldots,u_{m};M\right)\\
 & =\left(qu_{1}-1\right)L_{\fudge}\left(\frac{1}{qu_{1}},u_{2},\ldots u_{s},q^{1/2}u_{1}u_{s+1},\ldots,q^{1/2}u_{1}u_{m};M\right)\\
 & -\frac{qu_{1}+u_{1}-2}{n}\sum_{0\le j\le n-1}L_{\fudge}\left(\frac{1}{qu_{1}},u_{2},\ldots,u_{s},\zeta_{n}^{jM_{1,s+1}}q^{1/2}u_{1}u_{s+1},\ldots,\zeta_{n}^{jM_{1,m}}q^{1/2}u_{1}u_{m};M\right),
\end{align*}
which completes the proof of the functional equations as a formal
equality (Theorem \ref{thm:mainresult}).

\subsection{\label{subsec:A-short-verification}A short verification}

We explain how our functional equations recover the functional equations
in Whitehead's thesis, which, as we mentioned earlier, use the axioms
from Diaconu and Pasol's paper---this is completely analogous to
how we derive functional equations using the axioms from Sawin's paper.
One key difference is that Whitehead's argument is purely numerical
and elementary, whereas our argument is a combination of numerical
and geometric methods. 

Let us first recast the functional equations appearing as equations
(2.2.3) and (2.2.4) of Whitehead's thesis in terms of the notation
of our paper; we will freely use the notation of \cite{whitehead_multiple}
in this example. 

When Whitehead writes $j\sim i$, this is equivalent to the matrix
entry $M_{i,j}$ being equal to one. Otherwise, $M_{i,j}=0$. He also
requires $M_{i,i}=0$ for all $i$. Hence, without loss of generality,
in our setting where we require $s\ge1$, the symmetric matrix $M$
looks like 
\[
\begin{bmatrix}0 & 0 & \cdots & 0 & 1 & \cdots & 1\\
0 & 0 & * & * & * & * & *\\
\vdots & * & \ddots & * & * & * & *\\
0 & * & * & \ddots & * & * & *\\
1 & * & * & * & \ddots & * & *\\
\vdots & * & * & * & * & \ddots & *\\
1 & * & * & * & * & * & 0
\end{bmatrix},
\]
where the blank spots are arbitrary (1 or 0). 

Since $\chi$ is a non-trivial quadratic character in \cite{whitehead_multiple},
we have $n=2.$ 

Note that $M'=M$ because 
\begin{enumerate}
\item $M'_{i,j}=M_{i,j}+M_{1,i}+M_{1,j}=M_{i,j}+2\equiv M_{i,j}$ for $j>i\ge s+1,$ 
\item $M'_{i,i}=M_{i,i}+M_{1,i}+n/2=M_{i,i}+1+1\equiv M_{i,i}$ for $i\ge s+1$, 
\item $M'_{1,i}=-M_{1,i}\equiv M_{1,i}$ for all $i$,
\item $M_{i,j}'=M_{i,j}$ for $j>i>1$ and $i\le s$, and 
\item $M_{i,i}'=M_{i,i}$ for $i\le s$.
\end{enumerate}
In Whitehead's notation, $a_{i}=d_{i},$ $x_{i}=u_{i},$ and there
are two functional equations depending on the parity of $\sum_{j\sim1}a_{j}$. 

Since $q\equiv1\bmod4$ (by assumption in \cite{whitehead_multiple}),
we have 
\begin{align*}
\fudge\left(d_{2},\ldots,d_{m};M\right) & =\frac{\chi(-1)^{\sum_{s+1\le i<j\le m}d_{i}d_{j}M_{1,i}}(-1)^{\sum_{i\ge s+1}\frac{d_{i}(d_{i}-1)(q-1)}{4}}}{\prod_{i\ge s+1}G\left(\chi^{M_{1,i}},\psi\right)^{d_{i}}}\\
 & =\frac{1}{G\left(\chi,\psi\right)^{\sum_{i\ge s+1}d_{i}}}\\
 & =\frac{1}{\left(q^{1/2}\right)^{\sum_{i\ge s+1}d_{i}}}
\end{align*}
where we use the fact that the unique non-trivial quadratic character
$\chi\colon\F_{q}^{\times}\to\C^{\times}$ is given by $\left(\frac{\Nm_{\F_{q}/\F_{p}}\left(-\right)}{p}\right)$,
which means $\chi(-1)=1$ (if $p\equiv1\bmod4$, then $\left(\frac{-1}{p}\right)=1$
by quadratic reciprocity; if $p\equiv3\bmod4,$ then $q$ is necessarily
an even power of $p$), as well as Gauss's computation of a quadratic
Gauss sum.

The case $\sum_{j\sim1}a_{j}$ is odd means $\sum_{i\ge s+1}d_{i}$
is odd, i.e. the case where $\prod_{i=s+1}^{m}\left(\frac{-}{f_{i}}\right)_{\chi}^{M_{1,i}}$
is not trivial on $\F_{q}^{\times}$ from Subsection \ref{subsec:-is-trivial}. 

Then, Proposition \ref{prop:badlocalfunc} tells us that 

\begin{align*}
P_{f_{2},\ldots,f_{m};M}(u_{1}) & =\frac{\chi(-1)^{\sum_{i=s+1}^{m}d_{i}M_{1,i}}G\left(\chi^{\sum_{i=s+1}^{m}d_{i}M_{1,i}},\psi\right)}{q\fudge\left(d_{\ge s+1};M_{1,\ge s+1}\right)}u_{1}^{d-1}P_{f_{2},\ldots,f_{m};M'}\left(\frac{1}{qu_{1}}\right)\\
 & =\frac{G\left(\chi,\psi\right)\left(q^{1/2}\right)^{\sum_{i\ge s+1}d_{i}}}{q}u_{1}^{d-1}P_{f_{2},\ldots,f_{m};M'}\left(\frac{1}{qu_{1}}\right)\\
 & =\left(q^{1/2}u_{1}\right){}^{\left(\sum_{i\ge s}d_{i}\right)-1}P_{f_{2},\ldots,f_{m};M'}\left(q^{-1}u_{1}^{-1}\right).
\end{align*}

Summing both sides over all $f_{i}\in\mathcal{M}_{d_{i}}$ for $i\in\left\{ 2,\ldots,m\right\} $
gives the functional equation (2.2.3) of \cite{whitehead_multiple}.

The case $\sum_{j\sim i}a_{j}$ is even means $\sum_{i\ge s}d_{i}$
is even, i.e. the first case where $\prod_{i=s+1}^{m}\left(\frac{-}{f_{i}}\right)_{\chi}^{M_{1,i}}$
is trivial on $\F_{q}^{\times}$. Then, we know that 
\begin{align*}
P_{f_{2},\ldots,f_{m};M}\left(u_{1}\right)\left(qu_{1}-1\right) & =\frac{1}{\fudge\left(d_{\ge s+1};M_{1,\ge s+1}\right)}u_{1}^{d-1}\left(1-u_{1}\right)P_{f_{2},\ldots,f_{m};M'}\left(\frac{1}{qu_{1}}\right)\\
 & =\left(q^{1/2}u_{1}\right){}^{\sum_{i\ge s}d_{i}}\left(\frac{1-u_{1}}{u_{1}}\right)P_{f_{2},\ldots,f_{m};M'}\left(\frac{1}{qu_{1}}\right),
\end{align*}
so 
\[
\left(1-qu_{1}\right)P_{f_{2},\ldots,f_{m};M}(u_{1})=\left(q^{1/2}u_{1}\right){}^{\sum_{i\ge s+1}d_{i}}\left(1-u_{1}^{-1}\right)P_{f_{2},\ldots,f_{m};M'}\left(q^{-1}u_{1}^{-1}\right).
\]
Summing both sides over all $f_{i}\in\mathcal{M}_{d_{i}}$ for $i\in\left\{ 2,\ldots,m\right\} $
gives the functional equation (2.2.5) of \cite{whitehead_multiple}.

\section{Bounds on $a$-coefficients and their sums}

We now begin the second step of the proofs of the main results. Using
the notation in \cite{sawin_general}, along with notation of the
previous section, let 
\[
\lambda\left(d_{1},\ldots,d_{m};M\right)=\sum_{f_{1}\in\mathcal{M}_{d_{1}},\ldots,f_{m}\in\mathcal{M}_{d_{m}}}a\left(f_{1},\ldots,f_{m};M\right).
\]
In this section, we obtain upper bounds for $a\left(f_{1},\ldots,f_{m};M\right)$
and $\lambda\left(d_{1},\ldots,d_{m};M\right)$. To do so, we use
the Grothendieck--Lefschetz fixed point formula to bound $\lambda\left(d_{1},\ldots,d_{m};M\right)$
in terms of traces of Frobenius acting on compactly-supported cohomology
groups with coefficients in $K_{d_{1},\ldots,d_{m}}$. These, in turn,
can be bounded in terms of the dimensions of the cohomology groups.
These cohomology groups can be viewed as direct summands of cohomology
groups of a suitable compactification (via Kontsevich moduli spaces
of stable maps) with coefficients in a lisse rank one sheaf using
the decomposition theorem for perverse sheaves. Finally, we can use
the earlier results of Subsection \ref{subsec:generalbound}.

To bound $a\left(f_{1},\ldots,f_{m};M\right)$, we use the bound for
$\lambda\left(d_{1},\ldots,d_{m};M\right)$ along with the axioms
of a multiple Dirichlet series, namely the local-to-global relationship
and normalization (the last four axioms).

Let $r=d_{1}+\cdots+d_{m}$. 

We have 
\begin{align*}
\lambda\left(d_{1},\ldots,d_{m};M\right) & =\sum_{f_{1}\in\mathcal{M}_{d_{1}},\ldots,f_{m}\in\mathcal{M}_{d_{m}}}a\left(f_{1},\ldots,f_{m};M\right)\\
 & =\sum_{f_{1}\in\mathcal{M}_{d_{1}},\ldots,f_{m}\in\mathcal{M}_{d_{m}}}\sum_{i}\left(-1\right)^{i}\Tr\left(\Fr_{q},\mathcal{H}^{i}\left(K_{d_{1},\ldots,d_{m}}\right)_{\left(f_{1},\ldots,f_{m}\right)}\right)\\
 & =\sum_{i}\left(-1\right)^{i}\Tr\left(\Fr_{q},H_{c}^{i}\left(\prod_{j=1}^{m}\A_{\ol{\F_{q}}}^{d_{j}},K_{d_{1},\ldots,d_{m}}\right)\right),
\end{align*}
where we use the Grothendieck--Lefschetz fixed point formula (c.f.
\cite{Fu}) in the last equality.

Then, using the fact that $K_{d_{1},\ldots,d_{m}}$ is pure of weight
zero and Artin vanishing (c.f. \cite{Fu}), we obtain
\begin{align}
\left|\lambda\left(d_{1},\ldots,d_{m};M\right)\right| & \le\sum_{i=0}^{2r}\left|\Tr\left(\Fr_{q},H_{c}^{i}\left(\prod_{j=1}^{m}\A_{\ol{\F_{q}}}^{d_{j}},K_{d_{1},\ldots,d_{m}}\right)\right)\right|\le\sum_{i=0}^{2r}\left(\dim H_{c}^{i}\left(\prod_{j=1}^{m}\A_{\ol{\F_{q}}}^{d_{j}},K_{d_{1},\ldots,d_{m}}\right)\right)q^{i/2}.\label{eq:boundvialefschetz}
\end{align}
Note that the quotient map $\prod_{j=1}^{m}\A_{\ol{\F_{q}}}^{d_{j}}\to\left[\prod_{j=1}^{m}\A_{\ol{\F_{q}}}^{d_{j}}/S_{d_{1}}\times\cdots\times S_{d_{m}}\right]$
is proper and etale, and consequently $p\colon\left[\prod_{j=1}^{m}\A_{\ol{\F_{q}}}^{d_{j}}/S_{d_{1}}\times\cdots\times S_{d_{m}}\right]\to\prod_{j=1}^{m}\A_{\ol{\F_{q}}}^{d_{j}}/S_{d_{1}}\times\cdots\times S_{d_{m}}\cong\prod_{j=1}^{m}\A_{\ol{\F_{q}}}^{d_{j}}$
is proper and quasi-finite. Let $X$ denote the locus of tuples $\left(f_{1},\ldots,f_{m}\right)$
such that $\left(f_{i},f_{j}\right)=\left(f_{i},f_{i}'\right)=1$
for $i\ne j$ for all $1\le i,j\le m$, i.e. the pure configuration
space of $r$ points on $\A^{1}$ modulo $S_{d_{1}}\times\cdots\times S_{d_{m}}.$
Consider the following commutative diagram:\[\begin{tikzcd} 	{\left[\prod_{j=1}^{m}\A_{\ol{\F_{q}}}^{d_{j}}/S_{d_{1}}\times\cdots\times S_{d_{m}}\right]} & {\prod_{j=1}^{m}\A_{\ol{\F_{q}}}^{d_{j}}} \\ 	X 	\arrow["{j_\text{stack}}", hook, from=2-1, to=1-1] 	\arrow["j"', hook, from=2-1, to=1-2] 	\arrow["p", from=1-1, to=1-2] \end{tikzcd}\]

By the decomposition theorem for perverse sheaves from \cite{bbdg},
we have 
\begin{align*}
K_{d_{1},\ldots,d_{m}} & =j_{!*}\left(\mathcal{L}_{\chi}\left(F_{d_{1},\ldots,d_{m}}\right)\left[r\right]\right)\left[-r\right]\inplus p_{*}\left(j_{\text{stack}}\right)_{!*}\left(\mathcal{L}_{\chi}\left(F_{d_{1},\ldots,d_{m}}\right)\left[r\right]\right)\left[-r\right],
\end{align*}
where ``$\inplus$'' means direct summand. 

Let $X'$ be the pure configuration space of $r$ distinct points
on $\A^{1}$, and by abuse of notation let $j\colon X'\subset\prod_{j=1}^{m}\A_{\ol{\F_{q}}}^{d_{j}}$
be the natural open immersion. Then, by applying cohomology, we have
\begin{align}
H_{c}^{i}\left(\prod_{j=1}^{m}\A_{\ol{\F_{q}}}^{d_{j}},K_{d_{1},\ldots,d_{m}}\right) & \inplus H_{c}^{i}\left(\left[\prod_{j=1}^{m}\A_{\ol{\F_{q}}}^{d_{j}}/S_{d_{1}}\times\cdots\times S_{d_{m}}\right],\left(j_{\text{stack}}\right)_{!*}\left(\mathcal{L}_{\chi}\left(F_{d_{1},\ldots,d_{m}}\right)\left[r\right]\right)\left[-r\right]\right)\nonumber \\
 & =H_{c}^{i}\left(\prod_{j=1}^{m}\A_{\ol{\F_{q}}}^{d_{j}},j_{!*}\left(\mathcal{L}_{\chi}\left(F_{d_{1},\ldots,d_{m}}\right)\left[r\right]\right)\left[-r\right]\right)^{S_{d_{1}}\times\cdots\times S_{d_{m}}},\label{eq:stackvsgeo}
\end{align}

Recall (c.f. \cite{Kock_Vainsencher}) that there is a canonical evaluation
map $\ev\colon\mathcal{\ol M}_{0,r}\left(\P^{1},1\right)\to\left(\P^{1}\right)^{r}$
which sends a stable map to the images of its marked points. 

Then, consider the following Cartesian diagrams, where $\mathcal{\ol M}_{0,r}\left(\P^{1},1\right)_{U}$
denotes the pull-back of $\ev$ by the open inclusion $U\subset\left(\P^{1}\right)^{r}$:\[\begin{tikzcd} 	{\overline{\mathcal{M}}_{0,r}\left(\mathbb{P}^1,1\right)} & {\left(\mathbb{P}^1\right)^r} \\ 	{\overline{\mathcal{M}}_{0,r}\left(\mathbb{P}^1,1\right)_{\mathbb{A}^r}} & {\mathbb{A}^r} \\ 	{\overline{\mathcal{M}}_{0,r}\left(\mathbb{P}^1,1\right)_{X'}} & X' 	\arrow["\ev", from=1-1, to=1-2] 	\arrow[hook, from=2-2, to=1-2] 	\ar["\square", phantom, from=1-1, to=2-2] \arrow["\ev", from=2-1, to=2-2] 	\arrow[hook, from=2-1, to=1-1]  	\arrow[hook, from=3-2, to=2-2] 	\arrow[hook, from=3-1, to=2-1] 	\arrow["\ev", from=3-1, to=3-2] \ar["\square", phantom, from=2-1, to=3-2] \end{tikzcd}\]
\begin{lem}
\label{lem:ev_is_iso}$\ev\colon\ol{\mathcal{M}}_{0,r}\left(\P^{1},1\right)_{X'}\to X'$
is an isomorphism.
\begin{proof}
$\ol{\mathcal{M}}_{0,r}\left(\P^{1},1\right)$ is given by chains
of $\P^{1}$'s, along with a distinguished $\P^{1}$ that maps isomorphically
(of degree 1) onto $\P^{1}$. Moreover, every $\P^{1}$ that is not
the single distinguished $\P^{1}$ must be stable, i.e. have at least
three special points. But by assumption on $X$, this is only possible
if the twig is a single copy of $\P^{1}$, from which the result follows. 
\end{proof}
\end{lem}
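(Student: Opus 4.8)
The plan is to show that over $X'$ the source curve of any stable map is irreducible, after which $\ev$ becomes an identification with $X'$ under the obvious rigidification. First I would fix a geometric point of $X'$, i.e.\ a configuration of $r$ pairwise distinct points of $\A^1\subset\P^1$, and analyze the fiber of $\ev$ over it: it parametrizes stable maps $\left(C,p_1,\ldots,p_r\right)\to\P^1$ of degree one whose $i$-th marked point has the prescribed image, these images being distinct. By definition of such a stable map, $C$ has a tree of $\P^1$'s as dual graph, with one distinguished component $C_0$ mapping isomorphically onto $\P^1$ and all other components contracted. Deleting $C_0$ from the dual tree breaks $C\setminus C_0$ into subtrees, each meeting $C_0$ in exactly one node; since every component of such a subtree is contracted, the whole subtree maps to the single point of $\P^1$ that is the image of its attaching node. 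Hence all marked points lying on a given subtree share that one image, and as the images are distinct, each subtree carries at most one marked point. But a component of such a subtree that is farthest from $C_0$ meets the rest of $C$ in exactly one node, so to have at least three special points it must carry at least two marked points --- a contradiction. Therefore there are no subtrees: $C=C_0\cong\P^1$ and $C\to\P^1$ is an isomorphism.

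Next I would rigidify using this isomorphism. Every stable map over $X'$ is isomorphic to $\left(\P^1,\bar p_1,\ldots,\bar p_r\right)$ with the identity map to $\P^1$, where $\bar p_i$ is the image of $p_i$, and this presentation is unique because the only automorphism of $\P^1$ commuting with the identity map to $\P^1$ is the identity; in particular the fiber of $\ev$ over each geometric point of $X'$ is a single reduced point with trivial automorphism group. To promote this to an isomorphism of stacks rather than a mere bijection on points, I would write down the inverse directly as the morphism $X'\to\ol{\mathcal{M}}_{0,r}\left(\P^1,1\right)_{X'}$ sending a family of configurations $(p_1,\ldots,p_r)$ in $\A^1_S\subset\P^1_S$ over a base $S$ to the constant family $\left(\P^1_S,p_1,\ldots,p_r\right)$ equipped with the identity map to $\P^1_S$. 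Composing with $\ev$ is manifestly the identity on $X'$; for the other composite, one applies the fiberwise analysis above to the universal stable map over $\ol{\mathcal{M}}_{0,r}\left(\P^1,1\right)_{X'}$, whose source is flat over $X'$ and fiberwise maps isomorphically onto $\P^1$, hence is an isomorphism onto the trivial family $\P^1\times X'$ by the fiberwise criterion for isomorphisms --- which exhibits the universal object as a pullback of the trivial family and so identifies the two composites.

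The only genuinely delicate point is the combinatorial claim that the source is irreducible over $X'$; everything else is bookkeeping. In particular one must make sure the ``farthest component'' argument correctly rules out arbitrarily deep chains of contracted components, not merely those directly attached to $C_0$, and that the single-component-subtree case (where the leaf is attached directly to $C_0$) is handled by the same count.
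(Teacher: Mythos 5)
Your proof is correct and uses the same central idea as the paper's: counting special points on a leaf (``farthest'') contracted component of the source tree to derive a contradiction, given that the marked points map to distinct images, hence each contracted subtree carries at most one marked point. You also carefully supply the stack-theoretic rigidification step (exhibiting the inverse morphism and checking the automorphism group is trivial) that the paper's proof leaves implicit.
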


Let the open immersion $\ol{\mathcal{M}}_{0,r}\left(\P^{1},1\right)_{X'}\cong X'\subset\ol{\mathcal{M}}_{0,r}\left(\P^{1},1\right)$
be denoted by $g$.

Note that $\ev$ is proper because $\ol{\mathcal{M}}_{0,r}\left(\P^{1},1\right)$
and $\left(\P^{1}\right)^{r}$ are proper.
\begin{lem}
\label{lem:restriction_to_open}$j^{*}R\ev_{*}g_{!*}\left(\mathcal{L}_{\chi}\left(F_{d_{1},\ldots,d_{m}}\right)\left[r\right]\right)\cong\mathcal{L}_{\chi}\left(F_{d_{1},\ldots,d_{m}}\right)\left[r\right]$. 
\begin{proof}
By proper base change (c.f. \cite{Fu}), the left-hand side is isomorphic
to $R\ev_{*}g^{*}g_{!*}\left(\mathcal{L}_{\chi}\left(F_{d_{1},\ldots,d_{m}}\right)\left[r\right]\right)$,
so the result follows from Lemma \ref{lem:ev_is_iso}. 
\end{proof}
\end{lem}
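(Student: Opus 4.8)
The plan is to deduce the statement from proper base change together with Lemma \ref{lem:ev_is_iso}. The only geometric input is that, by construction, $\ol{\mathcal{M}}_{0,r}\left(\P^{1},1\right)_{X'}$ is the pullback of $\ev\colon\ol{\mathcal{M}}_{0,r}\left(\P^{1},1\right)\to\left(\P^{1}\right)^{r}$ along the immersion of $X'$ into $\left(\P^{1}\right)^{r}$, so there is a Cartesian square whose top horizontal arrow is the open immersion $g$, whose left vertical arrow is the restricted evaluation morphism $\ol{\mathcal{M}}_{0,r}\left(\P^{1},1\right)_{X'}\to X'$, and whose bottom horizontal arrow is $j$ followed by the open immersion $\prod_{i}\A^{d_{i}}\hookrightarrow\left(\P^{1}\right)^{r}$.

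First I would record that $\ev$ is proper: its source and target are both proper, so the graph of $\ev$ is a closed immersion of $\ol{\mathcal{M}}_{0,r}\left(\P^{1},1\right)$ into $\ol{\mathcal{M}}_{0,r}\left(\P^{1},1\right)\times\left(\P^{1}\right)^{r}$, and the projection onto the second factor is proper. Hence proper base change (in the form valid for morphisms of Deligne-Mumford stacks, as already cited in the paper) applies to the square above and yields a canonical isomorphism
\[
j^{*}R\ev_{*}\bigl(g_{!*}(\mathcal{L}_{\chi}(F_{d_{1},\ldots,d_{m}})[r])\bigr)\;\cong\;R\ev_{*}\bigl(g^{*}g_{!*}(\mathcal{L}_{\chi}(F_{d_{1},\ldots,d_{m}})[r])\bigr),
\]
where the $\ev$ on the right-hand side is now the restricted map $\ol{\mathcal{M}}_{0,r}\left(\P^{1},1\right)_{X'}\to X'$.

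The next step invokes the basic property of intermediate extensions along an open immersion, namely $g^{*}g_{!*}A\cong A$ for any perverse sheaf $A$ on $\ol{\mathcal{M}}_{0,r}\left(\P^{1},1\right)_{X'}$; taking $A=\mathcal{L}_{\chi}(F_{d_{1},\ldots,d_{m}})[r]$ collapses the right-hand side to $R\ev_{*}\bigl(\mathcal{L}_{\chi}(F_{d_{1},\ldots,d_{m}})[r]\bigr)$. Finally, Lemma \ref{lem:ev_is_iso} says this restricted $\ev$ is an isomorphism, so $R\ev_{*}$ is transport of structure along an isomorphism and the right-hand side is identified with $\mathcal{L}_{\chi}(F_{d_{1},\ldots,d_{m}})[r]$ on $X'$, which is exactly the claim.

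I do not expect a genuine obstacle: the argument is formal once the Cartesian square and the properness of $\ev$ are in hand. The only points worth a second look are verifying that the square really is Cartesian, which is immediate from the definition of $\ol{\mathcal{M}}_{0,r}\left(\P^{1},1\right)_{X'}$ as a preimage under $\ev$, and checking that both proper base change and the restriction property of $(-)_{!*}$ are being used in their Deligne-Mumford-stack incarnations, which is covered by the references already in the paper.
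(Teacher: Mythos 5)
Your argument is correct and is essentially the paper's own proof, just with the Cartesian square, properness of $\ev$, and the identity $g^{*}g_{!*}A\cong A$ spelled out explicitly rather than left implicit. No gaps.
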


Lemma \ref{lem:restriction_to_open} and the decomposition theorem
applied to the proper map $\ev$ gives that $j_{!*}\left(\mathcal{L}_{\chi}\left(F_{d_{1},\ldots,d_{m}}\right)\left[r\right]\right)$
is a direct summand of $R\ev_{*}g_{!*}\left(\mathcal{L}_{\chi}\left(F_{d_{1},\ldots,d_{m}}\right)\left[r\right]\right)$.

As a result, we have 
\begin{equation}
H_{c}^{i}\left(\A^{r},j_{!*}\left(\mathcal{L}_{\chi}\left(F_{d_{1},\ldots,d_{m}}\right)\left[r\right]\right)\right)\inplus H_{c}^{i}\left(\ol{\mathcal{M}}_{0,r}\left(\P^{1},1\right)_{\A^{r}},g_{!*}\left(\mathcal{L}_{\chi}\left(F_{d_{1},\ldots,d_{m}}\right)\left[r\right]\right)\right).\label{eq:boundfromdecomp}
\end{equation}

Hence, combining (\ref{eq:stackvsgeo}) and (\ref{eq:boundfromdecomp})
after taking $S_{d_{1}}\times\cdots\times S_{d_{m}}$-invariants,
we obtain
\begin{equation}
\dim H_{c}^{i}\left(\prod_{j=1}^{m}\A_{\ol{\F_{q}}}^{d_{j}},K_{d_{1},\ldots,d_{m}}\right)\le\dim H_{c}^{i}\left(\ol{\mathcal{M}}_{0,r}\left(\P^{1},1\right)_{\A^{r}},g_{!*}\left(\mathcal{L}_{\chi}\left(F_{d_{1},\ldots,d_{m}}\right)\left[r\right]\right)\left[-r\right]\right)^{S_{d_{1}}\times\cdots\times S_{d_{m}}}.\label{eq:boundwrittenagain}
\end{equation}
Note that the complement of $X'=\mathcal{M}_{0,r}\left(\P^{1},1\right)$
in $\mathcal{\ol M}_{0,r}\left(\P^{1},1\right)_{\A^{r}}$ is given
by the union of divisors comprising two $\P^{1}$'s (with one distinguished
$\P^{1}$ mapping isomorphically onto $\P^{1}$ with degree one);
indeed, by \cite{Kontsevich}, this complement is a normal crossings
divisor. By Proposition \ref{prop:IC_is_sheaf}, we have 
\[
g_{!*}\left(\mathcal{L}_{\chi}\left(F_{d_{1},\ldots,d_{m}}\right)\left[r\right]\right)=g_{!}'g_{*}''\left(\mathcal{L}_{\chi}\left(F_{d_{1},\ldots,d_{m}}\right)\right)\left[r\right],
\]
where $g$ is the composition $g'\circ g''$ such that $g''$ is the
inclusion of $X'$ into the union of $X'$ and the divisors for which
the local monodromy of $\mathcal{L}_{\chi}\left(F_{d_{1},\ldots,d_{m}}\right)$
around each divisor is trivial, and $g'$ is the remaining inclusion
into $\mathcal{\ol M}_{0,r}\left(\P^{1},1\right)_{\A^{r}}$. 

Hence, (\ref{eq:boundwrittenagain}) can be written as 
\begin{equation}
\dim H_{c}^{i}\left(\prod_{j=1}^{m}\A_{\ol{\F_{q}}}^{d_{j}},K_{d_{1},\ldots,d_{m}}\right)\le\dim H_{c}^{i}\left(\ol{\mathcal{M}}_{0,r}\left(\P^{1},1\right)_{\A^{r}},g_{!}'g_{*}''\left(\mathcal{L}_{\chi}\left(F_{d_{1},\ldots,d_{m}}\right)\right)\right)^{S_{d_{1}}\times\cdots\times S_{d_{m}}}.\label{eq:last_step}
\end{equation}

\cite{EVW} prove \footnote{Their statement is for $L$ a constant sheaf, but the proof is identical.}
the following (as Proposition 7.7):
\begin{prop}
\label{prop:prop7.7}Let $A$ be a Henselian discrete valuation ring
whose quotient field has characteristic zero. Let $\ol{\eta}$ and
$\ol s$ be the generic and special points of $\Spec A$, respectively. 

Suppose $X$ is a scheme smooth and proper over $\Spec A$, $D\hookrightarrow X$
is a normal crossings divisor relative to $\Spec A$, and $U=X\backslash D$.
Let $G$ be a finite group acting on $X$ and $U$ compatibly and
$L$ a lisse sheaf on $U$. 

Then, 
\[
H_{c}^{i}\left(U_{\ol{\eta}},L\right)\cong H_{c}^{i}\left(U_{\ol s},L\right)
\]
 as $G$-modules for all $i$. 
\end{prop}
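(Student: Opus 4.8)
The plan is to realize $H^{i}_{c}(U_{\ol\eta},L)$ and $H^{i}_{c}(U_{\ol s},L)$ as the two geometric fibres of a single complex on $S=\Spec A$ which is \emph{lisse} on the trait $S$, so that the two fibres are canonically identified by the cospecialization map; since $G$ acts compatibly on $X$, $U$, $D$ and $L$ over $S$, this identification will automatically be one of $G$-modules. Write $f\colon X\to S$ for the structure map, $j\colon U\hookrightarrow X$ for the open immersion, and $i_{\ol s},i_{\ol\eta}$ for the inclusions of the geometric special and generic fibres of $X$. First I would note that, because $f$ is proper, proper base change together with the (unconditional) compatibility of $j_{!}$ with base change give $R\Gamma_{c}(U_{\ol\eta},L)\cong (Rf_{*}j_{!}L)_{\ol\eta}$ and $R\Gamma_{c}(U_{\ol s},L)\cong (Rf_{*}j_{!}L)_{\ol s}$, compatibly with $G$. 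Hence it suffices to show that $K:=Rf_{*}j_{!}L$ is lisse on $S$; equivalently, that the inertia group $I_{\ol\eta}$ acts trivially on the nearby-cycles complex $R\Psi_{S}(K)$ and that the cospecialization map $i_{\ol s}^{*}K\to R\Psi_{S}(K)$ is an isomorphism.

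Next I would invoke the compatibility of nearby cycles with proper pushforward: $R\Psi_{S}(Rf_{*}j_{!}L)\cong Rf_{\ol s\,*}\,R\Psi_{f}(j_{!}L)$, compatibly with the inertia action and with the cospecialization maps. This reduces the whole problem to the local statement that $j_{!}L$ is ``$f$-acyclic along the special fibre'', i.e.\ that the cospecialization map $i_{\ol s}^{*}(j_{!}L)\to R\Psi_{f}(j_{!}L)$ is an isomorphism of complexes on $X_{\ol s}$ carrying a trivial inertia action; this may be checked stalkwise, hence \'etale-locally on $X_{\ol s}$. At points lying in $U_{\ol s}$ there is nothing to do: $f|_{U}$ is smooth and $L$ is lisse, so by local acyclicity of smooth morphisms $R\Psi_{f|_{U}}(L)=L|_{U_{\ol s}}$ with trivial inertia. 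So the content is concentrated at points of the special fibre of the boundary $D$.

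The hard part is this boundary computation, and it is exactly where the relative-normal-crossings hypothesis is used. \'Etale-locally around a point of $D\cap X_{\ol s}$ the pair $(X,D)$ over $S$ looks like $(\A^{n}_{S},\{u_{1}\cdots u_{r}=0\})$, so that $U$ looks like $\G_{m}^{r}\times\A^{n-r}\times S$ and $f$ is the projection to $S$. Exactly as in the proof of Proposition \ref{prop:IC_is_sheaf} --- passing to a Kummer covering via Abhyankar's lemma, and using that intermediate extensions, pushforwards, nearby cycles and $!$-extensions all commute with exterior products --- one reduces to the rank-one model of a tame Kummer sheaf $\mathcal{L}_{\chi}$ on $\G_{m}$ over $S$ (here $L$ may be assumed tamely ramified along $D$: this is automatic over $\ol\eta$, where the base has characteristic zero, and it is the only case needed here and in \cite{EVW}, where $L$ is constant). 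For that model one checks directly that the \emph{geometric} Milnor fibre of $f$ at a boundary point, with the $D$-locus deleted and $!$-extended, has vanishing cohomology --- deleting the geometrically reduced codimension-one divisor $D$ from the (acyclic, since $f$ is smooth) geometric Milnor fibre does not change the relevant cohomology --- so $R\Psi_{f}(j_{!}L)$ has zero stalk at such a point, matching $i_{\ol s}^{*}(j_{!}L)$.

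Combining the last two paragraphs, $R\Psi_{f}(j_{!}L)=i_{\ol s}^{*}(j_{!}L)=j_{\ol s,!}(L|_{U_{\ol s}})$ with trivial inertia; pushing forward by the proper $f_{\ol s}$ and using proper base change once more gives $R\Psi_{S}(K)=i_{\ol s}^{*}K$ with trivial inertia, i.e.\ $K$ is lisse on $S$. Therefore $(Rf_{*}j_{!}L)_{\ol\eta}\cong(Rf_{*}j_{!}L)_{\ol s}$, and, all of the above being $G$-equivariant, this is an isomorphism of $G$-modules $H^{i}_{c}(U_{\ol\eta},L)\cong H^{i}_{c}(U_{\ol s},L)$ for every $i$. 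I expect the Abhyankar/Milnor-fibre reduction of the third paragraph to be the only real obstacle; the remainder is formal manipulation of nearby cycles, proper base change, and equivariance.
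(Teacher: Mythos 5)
The paper does not prove this result itself: it cites Proposition~7.7 of \cite{EVW} and remarks in a footnote that the constant-coefficient argument there carries over to lisse $L$. Your proposal instead gives a direct argument via nearby cycles: identify both cohomology groups with the geometric stalks of $K=Rf_*j_!L$ on the trait $S$, use the compatibility of $R\Psi$ with proper pushforward to reduce to showing that the cospecialization $i_{\ol s}^*(j_!L)\to R\Psi_f(j_!L)$ is an isomorphism with trivial inertia, and check this etale-locally using the relative normal crossings structure. The formal scaffolding (proper base change, the $R\Psi$-pushforward compatibility, local acyclicity of $f|_U$, exterior-product reductions, $G$-equivariance of everything in sight) is sound, and this is a perfectly reasonable route to the statement.

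The step that needs more care is the Abhyankar reduction at a boundary point of the special fibre. To form a Kummer cover $(u_i')^N=u_i$ that is etale over the whole of $S$ (not merely over the generic fibre) and that trivializes $\pi^*L$, you need the local monodromy of $L$ around each branch of $D$ to have order prime to the residue characteristic $p$. Your parenthetical justification --- that tameness is automatic over $\ol\eta$ because it has characteristic zero --- is true but does not by itself yield the prime-to-$p$ condition the cover over the trait requires: the Milnor-fibre computation is anchored at a point of the \emph{special} fibre, where a $p$-power Kummer cover would be inseparable. For the $L$ of this paper (a prime-to-$p$ Kummer sheaf) and for \cite{EVW} (constant $L$) the condition holds trivially, as you note, but for the general lisse $L$ of the statement one should either invoke that a lisse sheaf on the complement of a relative NCD over a henselian trait automatically has prime-to-$p$ local monodromy along the divisor (a consequence of the specialization theory of the tame fundamental group, cf.\ Grothendieck--Murre), or simply build tameness into the hypotheses. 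This is a small but genuine gap; once it is closed, your Milnor-fibre vanishing and the bookkeeping that follows are correct.
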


Let us apply this proposition to our situation. By setting $A$ to
be the Witt vectors $W\left(\F_{p}\right)$, and since $g_{*}''\left(\mathcal{L}_{\chi}\left(F_{d_{1},\ldots,d_{m}}\right)\right)$
is a lisse sheaf (by Proposition \ref{prop:IC_is_sheaf}), Proposition
\ref{prop:prop7.7} tells us that we may actually work in the characteristic
zero situation.

To bound 
\begin{align*}
 & \dim H_{c}^{i}\left(\ol{\mathcal{M}}_{0,r}\left(\P^{1},1\right)_{\A^{r}},g_{!}'g_{*}''\left(\mathcal{L}_{\chi}\left(F_{d_{1},\ldots,d_{m}}\right)\right)\right)^{S_{d_{1}}\times\cdots\times S_{d_{m}}}\\
 & =\dim H_{c}^{i}\left(\left[\ol{\mathcal{M}}_{0,r}\left(\P^{1},1\right)_{\A^{r}}/S_{d_{1}}\times\cdots\times S_{d_{m}}\right],g_{!}'g_{*}''\left(\mathcal{L}_{\chi}\left(F_{d_{1},\ldots,d_{m}}\right)\right)\right)\\
 & =\dim H_{c}^{i}\left(\left[\left(\ol{\mathcal{M}}_{0,r}\left(\P^{1},1\right)_{\A^{r}}\backslash D\right)/S_{d_{1}}\times\cdots\times S_{d_{m}}\right],g_{*}''\left(\mathcal{L}_{\chi}\left(F_{d_{1},\ldots,d_{m}}\right)\right)\right),
\end{align*}
where $D$ is the union of divisors for which the local monodromy
of $\mathcal{L}_{\chi}\left(F_{d_{1},\ldots,d_{m}}\right)$ is non-trivial,
Poincare duality (c.f. \cite{Fu}) implies that it suffices to bound
the more general quantity

\[
\dim H^{*}\left(\left[\left(\ol{\mathcal{M}}_{0,r}\left(\P^{1},1\right)_{\A^{r}}\backslash D\right)/S_{d_{1}}\times\cdots\times S_{d_{m}}\right],\mathcal{L}\right),
\]
for any rank one lisse sheaf $\mathcal{L}$ on $\left[\left(\ol{\mathcal{M}}_{0,r}\left(\P^{1},1\right)_{\A^{r}}\backslash D\right)/S_{d_{1}}\times\cdots\times S_{d_{m}}\right],$
which we already did in Subsection \ref{subsec:generalbound}.

Let $C=64m$. Combining (\ref{eq:last_step}), Proposition \ref{prop:prop7.7},
and Proposition \ref{prop:cohomologybound}, we obtain the bound 
\begin{equation}
\dim H_{c}^{*}\left(\prod_{j=1}^{m}\A_{\ol{\F_{q}}}^{d_{j}},K_{d_{1},\ldots,d_{m}}\right)\ll_{m}C^{r}.\label{eq:boundoncj}
\end{equation}

Applying this to (\ref{eq:boundvialefschetz}), we get the following:
\begin{prop}
\label{prop:boundsonsum}
\begin{align*}
\left|\lambda\left(d_{1},\ldots,d_{m};M\right)\right| & \ll_{m}C^{r}\sum_{i=0}^{2r}q^{i/2}\ll_{q,m}\left(Cq\right)^{r}.
\end{align*}
\end{prop}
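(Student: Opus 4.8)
The plan is to assemble the estimates that have already been set up, so the argument is short. The starting point is inequality \eqref{eq:boundvialefschetz}, which records that $\left|\lambda\left(d_{1},\ldots,d_{m};M\right)\right|\le\sum_{i=0}^{2r}\dim H_{c}^{i}\bigl(\prod_{j=1}^{m}\A_{\ol{\F_{q}}}^{d_{j}},K_{d_{1},\ldots,d_{m}}\bigr)\,q^{i/2}$; this came from writing $\lambda$ as the Euler characteristic of $K_{d_{1},\ldots,d_{m}}$ via Grothendieck--Lefschetz, bounding each Frobenius eigenvalue on $H_{c}^{i}$ by $q^{i/2}$ (purity of weight zero), and restricting to the range $0\le i\le 2r$ (Artin vanishing on the affine scheme $\prod_{j}\A^{d_{j}}$, which has dimension $r$).

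Next I would substitute the cohomological dimension bound \eqref{eq:boundoncj}, that is $\dim H_{c}^{*}\bigl(\prod_{j=1}^{m}\A_{\ol{\F_{q}}}^{d_{j}},K_{d_{1},\ldots,d_{m}}\bigr)\ll_{m}(Cm)^{r}$ with $C=64m$. This is where all of the geometric input is used: the decomposition theorem realizes $K_{d_{1},\ldots,d_{m}}$ as a direct summand of $p_{*}(j_{\text{stack}})_{!*}\bigl(\mathcal{L}_{\chi}(F_{d_{1},\ldots,d_{m}})[r]\bigr)[-r]$, the proper evaluation map out of $\ol{\mathcal{M}}_{0,r}(\P^{1},1)$ then embeds the relevant compactly supported cohomology into that of the lisse rank-one sheaf $g''_{*}\mathcal{L}_{\chi}(F_{d_{1},\ldots,d_{m}})$ on a quotient of $\ol{\mathcal{M}}_{0,r}(\P^{1},1)_{\A^{r}}$ by a Young subgroup, proposition \ref{prop:prop7.7} transports the computation to characteristic zero, and proposition \ref{prop:cohomologybound} supplies the bound. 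Feeding this into the previous inequality gives $\left|\lambda\left(d_{1},\ldots,d_{m};M\right)\right|\ll_{m}(Cm)^{r}\sum_{i=0}^{2r}q^{i/2}$, which is the first claimed inequality.

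Finally I would dispose of the geometric sum: since $q>1$, $\sum_{i=0}^{2r}q^{i/2}=\frac{q^{(2r+1)/2}-1}{q^{1/2}-1}\le\frac{q^{1/2}}{q^{1/2}-1}\,q^{r}\ll_{q}q^{r}$, hence $(Cm)^{r}\sum_{i=0}^{2r}q^{i/2}\ll_{q,m}(Cmq)^{r}$, which is the second inequality. I do not expect any real obstacle here: every substantive ingredient---the identification of $\lambda$ with an Euler characteristic of $K_{d_{1},\ldots,d_{m}}$, purity and Artin vanishing, the decomposition-theorem reduction to the Kontsevich space, the spreading-out argument of proposition \ref{prop:prop7.7}, and the tree-counting estimate of proposition \ref{prop:cohomologybound}---has already been carried out, so this proposition is a bookkeeping corollary whose only genuinely new content is the elementary summation of $\sum_{i=0}^{2r}q^{i/2}$.
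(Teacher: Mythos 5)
Your proof is correct and is exactly the paper's argument: substitute the cohomological dimension bound \eqref{eq:boundoncj} into the Grothendieck--Lefschetz/purity/Artin-vanishing estimate \eqref{eq:boundvialefschetz}, then absorb the geometric series $\sum_{i=0}^{2r}q^{i/2}\ll_{q}q^{r}$ into the implicit constant. The only genuinely new content in the proposition is this last elementary summation, which you handle correctly.
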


Let us now bound $a\left(f_{1},\ldots,f_{m};M\right)$ for $f_{i}\in\mathcal{M}_{d_{i}}$. 
\begin{cor}
\label{cor:boundsona}Let $\pi$ be a prime. Then, for $\sum_{i}e_{i}=0$
or 1, we have $\left|a\left(\pi^{e_{1}},\ldots,\pi^{e_{m}}\right)\right|=1$,
and for $\sum_{i}e_{i}\ge2$, we have 
\[
\left|a\left(\pi^{e_{1}},\ldots,\pi^{e_{m}}\right)\right|\ll_{m}C^{\ep+\sum_{i}e_{i}}q^{-\deg\pi}q^{\frac{\sum_{i}e_{i}}{2}\deg\pi}
\]
for all $\ep>0$.
\begin{proof}
If $\sum_{i}e_{i}=0$ or 1, $K_{e_{1},\ldots,e_{m}}$ is the constant
sheaf. Then, by the third axiom of Theorem \ref{thm:axioms}, we have
\[
\left|a\left(\pi^{e_{1}},\ldots,\pi^{e_{m}}\right)\right|=\left|\sum_{j\in J\left(e_{1},\ldots,e_{m};q,\chi,M\right)}c_{j}\alpha_{j}^{\deg\pi}\right|=1.
\]

For $\sum_{i}e_{i}\ge2$, by the fifth axiom of Theorem \ref{thm:axioms},
\begin{align*}
\left|a\left(\pi^{e_{1}},\ldots,\pi^{e_{m}}\right)\right| & =\left|\sum_{j\in J\left(e_{1},\ldots,e_{m};q,\chi,M\right)}c_{j}\alpha_{j}^{\deg\pi}\right|\le\left(\sum_{j\in J\left(e_{1},\ldots,e_{m};q,\chi,M\right)}\left|c_{j}\right|\right)\left(q^{\frac{\sum_{i}e_{i}}{2}-1}\right)^{\deg\pi}.
\end{align*}
By Theorem \ref{thm:axioms}, $c_{j}$ is the signed multiplicity
of $\alpha_{j},$ which is an eigenvalue of $\Fr_{q}$ acting on the
complex $\left(K_{d_{1},\ldots,d_{m}}\right)_{\left(t^{d_{1}},\ldots,t^{d_{m}}\right)}$.
By $\G_{m}$-localization (Lemma 2.16 of \cite{sawin_general}), Artin
vanishing (c.f. \cite{Fu}), and (\ref{eq:boundoncj}), we have 
\begin{align*}
\sum_{j\in J\left(e_{1},\ldots,e_{m};q,\chi,M\right)}\left|c_{j}\right| & =\sum_{i}\dim\mathcal{H}^{i}\left(\left(K_{e_{1},\ldots,e_{m}}\right)_{\left(T^{e_{1}},\ldots,T^{e_{m}}\right)}\right)\\
 & =\sum_{i}\dim H_{c}^{i}\left(\prod\A_{\ol{\F_{q}}}^{e_{j}},K_{e_{1},\ldots,e_{m}}\right)\\
 & =\sum_{i=0}^{2(e_{1}+\cdots+e_{m})}\dim H_{c}^{i}\left(\prod\A_{\ol{\F_{q}}}^{e_{j}},K_{e_{1},\ldots,e_{m}}\right)\\
 & \ll_{m}C^{\sum_{i}e_{i}+\ep}
\end{align*}
for all $\ep>0$. Then, for all $\ep>0$, we have 
\[
\left|a\left(\pi^{e_{1}},\ldots,\pi^{e_{m}}\right)\right|\ll_{m}C^{\ep+\sum_{i}e_{i}}\left(q^{\frac{\sum_{i}e_{i}}{2}-1}\right)^{\deg\pi}=C^{\ep+\sum_{i}e_{i}}q^{-\deg\pi}q^{\frac{\sum_{i}e_{i}}{2}\deg\pi},
\]
as desired.
\end{proof}
\end{cor}

\section{Meromorphic continuation of multiple Dirichlet series}

Using the bounds of the previous section, we are finally able to finish
the proofs of Theorems \ref{thm:firstmain} and \ref{thm:mainresult}.
This is 

Again, let us continue using the notation of the previous section.
In particular, recall that $C=64m$. 

First, we show the following:
\begin{prop}
\label{prop:analyticeq}$\left(qu_{1}-1\right)L\left(u_{1},\ldots,u_{m};M\right)$
and $\left(qu_{1}-1\right)L_{\fudge}\left(u_{1},\ldots,u_{m};M\right)$
converge for 
\[
\left|u_{s+1}\right|,\ldots,\left|u_{m}\right|<\frac{1}{Cq\max\left\{ Cq\left|u_{1}\right|,1\right\} };\left|u_{2}\right|,\ldots,\left|u_{s}\right|<\frac{1}{Cq}.
\]
\end{prop}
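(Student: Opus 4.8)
The plan is to deduce the region of convergence from the coefficient bounds established in the previous section. Recall that both series can be organized as sums over tuples $(f_1,\ldots,f_m)$, and that $\lambda(d_1,\ldots,d_m;M)$ denotes the sum of $a$-coefficients over all monic tuples of fixed degrees. The natural first step is to bound the coefficient of $u_1^{d_1}\cdots u_m^{d_m}$ in $L(u_1,\ldots,u_m;M)$ by $\lvert\lambda(d_1,\ldots,d_m;M)\rvert$, and then invoke Proposition \ref{prop:boundsonsum}, which gives $\lvert\lambda(d_1,\ldots,d_m;M)\rvert\ll_{q,m}(Cmq)^r$ with $r=d_1+\cdots+d_m$. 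However, this crude bound is not strong enough to get convergence in a product region; the issue is that the $u_1$-variable has a pole at $u_1=1/q$ coming from the geometric-series tail (each $P_{f_2,\ldots,f_m;M}(u_1)$ has a $\frac{1}{1-qu_1}$ factor), so one first clears this denominator by multiplying through by $(qu_1-1)$.

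Here is the sequence of steps I would carry out. First, fix $f_2,\ldots,f_m$ with $\deg f_i = d_i$ and consider $(qu_1-1)P_{f_2,\ldots,f_m;M}(u_1)$; by Lemma \ref{lem:expressionforP} this is a polynomial in $u_1$ of degree at most $d = d_{s+1}+\cdots+d_m$ (the pole is cancelled), whose coefficients are $\Z$-linear combinations of the $S_{t;f_2,\ldots,f_m;M}$ for $t\le d$. Each $S_{t;f_2,\ldots,f_m;M}=\sum_{f\in\mathcal{M}_t}a(f,f_2,\ldots,f_m;M)$ is a partial sum over the first variable, so using twisted multiplicativity (the first axiom of Theorem \ref{thm:axioms}) to reduce to prime-power coefficients, together with the bounds $\lvert a(\pi^{e_1},\ldots,\pi^{e_m})\rvert\ll_m C^{\sum_i e_i}q^{-\deg\pi}q^{\frac{1}{2}(\sum_i e_i)\deg\pi}$ from Corollary \ref{cor:boundsona}, one bounds $\lvert S_{t;f_2,\ldots,f_m;M}\rvert$ and hence the coefficients of $(qu_1-1)P_{f_2,\ldots,f_m;M}(u_1)$ by roughly $C^{d_1+\cdots+d_m}q^{?}$ uniformly. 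Second, sum over $f_2,\ldots,f_m$: there are $q^{d_2}\cdots q^{d_m}$ such tuples (from $\lvert\mathcal{M}_{d_i}\rvert = q^{d_i}$), which contributes an extra factor $q^{d_2+\cdots+d_m}$. Putting these together with the explicit rescaling $u_{s+1}\mapsto q^{1/2}u_1u_{s+1}$ etc. built into $L_{\fudge}$, one obtains a geometric bound on the coefficient of each monomial of the form (constant depending on $q,m$) times $(Cq)^{d_2+\cdots+d_s}\cdot(Cq\max\{Cq\lvert u_1\rvert,1\})^{d_{s+1}+\cdots+d_m}$, where the $\max$ accounts for the two regimes $\lvert u_1\rvert$ large or small in the $d_i$-dependence coming from the $q^{1/2}u_1$ rescaling and the geometric-tail degree bound $d$. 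Third, compare with a geometric series in each variable: the series converges absolutely whenever $\lvert u_j\rvert$ for $j=2,\ldots,s$ is less than $\frac{1}{Cq}$ and $\lvert u_j\rvert$ for $j\ge s+1$ is less than $\frac{1}{Cq\max\{Cq\lvert u_1\rvert,1\}}$, which is exactly the claimed region.

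I would spell out the summation over the first variable carefully, since that is where the shape of the denominator $\max\{Cq\lvert u_1\rvert,1\}$ enters: when $\lvert u_1\rvert$ is small the dominant contribution to $(qu_1-1)P_{f_2,\ldots,f_m;M}(u_1)$ comes from the low-degree terms in $u_1$ and one gets the "$1$" branch of the max, while when $\lvert u_1\rvert$ is large the top-degree term $\sim u_1^d$ with $d = d_{s+1}+\cdots+d_m$ dominates and the factor $(Cq\lvert u_1\rvert)^{d_{s+1}+\cdots+d_m}$ appears, explaining the $Cq\lvert u_1\rvert$ branch. One should also separately note that $(qu_1-1)L_{\fudge}$ is handled identically — the extra fudge factors $b(d_{\ge s+1};M_{1,\ge s+1})$ are bounded in absolute value (Gauss sums have absolute value $q^{1/2}$ and the $\fudge$ factors are products of such, so $\lvert b(d_{\ge s+1};M_{1,\ge s+1})\rvert$ is bounded by a power of $q$ absorbed into the constant and the $q^{-\sum d_i/2}$ rescaling), and the replacement of $M$ by $M'$ does not affect the bounds of Corollary \ref{cor:boundsona} since those are uniform in $M$.

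The main obstacle I anticipate is bookkeeping rather than conceptual: carefully tracking how the two scale regimes in $u_1$ interact with the twisted-multiplicativity reduction when bounding $S_{t;f_2,\ldots,f_m;M}$, so that the exponents of $C$, $q$, and $\lvert u_1\rvert$ come out exactly as in the stated region. In particular, one must ensure the bound on $\lvert S_{t;f_2,\ldots,f_m;M}\rvert$ is uniform over $t\le d$ and that the sum $\sum_{f\in\mathcal{M}_t}$, after the multiplicativity reduction to prime-power contributions and an Euler-product-style estimate, does not introduce worse than geometric growth in $d_1$ — this is where Corollary \ref{cor:boundsona}'s extra $q^{-\deg\pi}$ saving per prime is essential to keep the relevant Dirichlet-series-type sum convergent. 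Once that uniform bound is in hand, comparison with a product of geometric series is routine.
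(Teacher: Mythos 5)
You correctly identify the need to clear the pole by multiplying by $\left(qu_1-1\right)$ and that $\left(qu_1-1\right)P_{f_2,\ldots,f_m;M}\left(u_1\right)$ is then a polynomial in $u_1$ of degree $d_{s+1}+\cdots+d_m$, but from that point your route diverges from the paper's and has a genuine gap. After summing over $f_2\in\mathcal{M}_{d_2},\ldots,f_m\in\mathcal{M}_{d_m}$, the coefficient of $u_1^t$ is exactly $q\lambda\left(t-1,d_2,\ldots,d_m;M\right)-\lambda\left(t,d_2,\ldots,d_m;M\right)$; the paper bounds this \emph{directly} by $\left(Cq\right)^{t+d_2+\cdots+d_m}$ using Proposition \ref{prop:boundsonsum}, and the claimed region then falls out of a geometric-series comparison. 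You dismiss Proposition \ref{prop:boundsonsum} as "not strong enough," but it is in fact the engine of the proof: the only thing it cannot handle by itself is the $\frac{1}{1-qu_1}$ pole, which the $\left(qu_1-1\right)$ factor removes. Having declined to use it, you instead propose to bound each $S_{t;f_2,\ldots,f_m;M}$ individually via twisted multiplicativity and Corollary \ref{cor:boundsona}, then multiply by the $q^{d_2+\cdots+d_m}$ tuples. That is the mechanism the paper uses for its \emph{other}, Euler-product convergence region, and that region requires $\left|u_1\right|<q^{-1}$ — it does not allow $\left|u_1\right|$ to be unbounded, which is the whole point of Proposition \ref{prop:analyticeq}. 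It is far from clear (and you do not show) that bounding $S_t$ term-by-term via Corollary \ref{cor:boundsona} and then summing over tuples recovers the clean $\left(Cq\right)^{t+d_2+\cdots+d_m}$ shape: bounding $S_t$ by $\sum_{f\in\mathcal{M}_t}\left|a\left(f,f_2,\ldots\right)\right|$ loses the cancellation that the cohomological bound on $\lambda$ captures, and your own write-up leaves the decisive exponent as "$q^?$."

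Concretely: rather than bound $S_{t;f_2,\ldots,f_m;M}$ for a fixed tuple and multiply by the tuple count, sum over the tuples first. The identity $\sum_{f_2,\ldots,f_m}S_{t;f_2,\ldots,f_m;M}=\lambda\left(t,d_2,\ldots,d_m;M\right)$ converts the problem into one to which Proposition \ref{prop:boundsonsum} applies verbatim, yielding $\left|q\lambda\left(t-1,\ldots\right)-\lambda\left(t,\ldots\right)\right|\ll_{q,m}\left(Cq\right)^{t+d_2+\cdots+d_m}$. From there one gets
\[
\left|\left(qu_1-1\right)\sum_{f_2,\ldots,f_m}P_{f_2,\ldots,f_m;M}\left(u_1\right)\right|\ll_{q,m}\left(Cq\right)^{d_2+\cdots+d_m}\left(1+d_{s+1}+\cdots+d_m\right)\max\left\{1,\left|Cqu_1\right|^{d_{s+1}+\cdots+d_m}\right\},
\]
and the stated region is precisely where the resulting multi-geometric series converges. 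The $L_{\fudge}$ case is handled identically once one notes $\left|b\left(d_{\ge s+1};M_{1,\ge s+1}\right)\right|$ is bounded independently of the degrees, as you correctly observe.
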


\begin{rem}
Note that the inequalities 
\[
\left|u_{s+1}\right|,\ldots,\left|u_{m}\right|<\frac{1}{Cq\max\left\{ Cq\left|u_{1}\right|,1\right\} };\left|u_{2}\right|,\ldots,\left|u_{s}\right|<\frac{1}{Cq}
\]
and 
\[
\left|q^{1/2}u_{1}u_{s+1}\right|,\ldots,\left|q^{1/2}u_{1}u_{m}\right|<\frac{1}{Cq\max\left\{ Cq\left|\frac{1}{qu_{1}}\right|,1\right\} };\left|u_{2}\right|,\ldots,\left|u_{s}\right|<\frac{1}{Cq}
\]
are simultaneously satisfied by 
\[
u_{1}\ne0;\left|u_{1}\right|,\ldots,\left|u_{s}\right|<\frac{1}{Cq};\left|u_{s+1}\right|,\ldots,\left|u_{m}\right|<\frac{1}{C^{2}q^{3/2}};\left|u_{2}\right|,\ldots,\left|u_{s}\right|<\frac{1}{Cq}.
\]
Hence, the proposition completes the proofs of Theorems \ref{thm:firstmain}
and \ref{thm:mainresult}.

Beyond the range of convergence achieved in Proposition \ref{prop:analyticeq},
we show in Proposition \ref{prop:extraconverge} below how to extend
the range of convergence slightly further. 
\end{rem}

\begin{proof}
[Proof of Proposition \ref{prop:analyticeq}] Let us prove convergence
for $\left(qu_{1}-1\right)L\left(u_{1},\ldots,u_{m};M\right)$ first. 

From Lemma \ref{lem:expressionforP}, we have 
\[
\left(qu_{1}-1\right)P_{f_{2},\ldots,f_{m};M}\left(u_{1}\right)=-S_{d_{s+1}+\cdots+d_{m};f_{2},\ldots,f_{m};M}u_{1}^{d_{s+1}+\cdots+d_{m}}+\sum_{t=0}^{d_{s+1}+\cdots+d_{m}-1}\left(qu_{1}-1\right)S_{t;f_{2},\ldots,f_{m};M}u_{1}^{t}.
\]
Summing over all $f_{2}\in\mathcal{M}_{d_{2}},\ldots,f_{m}\in\mathcal{M}_{d_{m}}$,
we obtain 
\begin{align*}
\left(qu_{1}-1\right)\sum_{f_{i}\in\mathcal{M}_{d_{i}}}P_{f_{2},\ldots,f_{m};M}\left(u_{1}\right) & =\sum_{t=0}^{d_{s+1}+\cdots+d_{m}}\left(q\lambda\left(t-1,d_{2},\ldots,d_{m};M\right)-\lambda\left(t,d_{2},\ldots,d_{m};M\right)\right)u_{1}^{t}.
\end{align*}

By Proposition \ref{prop:boundsonsum}, we have
\begin{align*}
\left|q\lambda\left(t-1,d_{2},\ldots,d_{m};M\right)-\lambda\left(t,d_{2},\ldots,d_{m};M\right)\right| & \le q\left|\lambda\left(t-1,d_{2},\ldots,d_{m};M\right)\right|+\left|\lambda\left(t,d_{2},\ldots,d_{m};M\right)\right|\\
 & \ll_{q,m}\left(Cq\right)^{t+d_{2}+\cdots+d_{m}}.
\end{align*}
It follows that
\begin{align*}
 & \left|\left(qu_{1}-1\right)\sum_{f_{2}\in\mathcal{M}_{d_{2}},\ldots,f_{m}\in\mathcal{M}_{d_{m}}}P_{f_{2},\ldots,f_{m};M}\left(u_{1}\right)\right|\\
 & \le\sum_{t=0}^{d_{s+1}+\cdots+d_{m}}\left|q\lambda\left(t-1,d_{2},\ldots,d_{m};M\right)-\lambda\left(t,d_{2},\ldots,d_{m};M\right)\right|\left|u_{1}\right|^{t}\\
 & \ll_{q,m}\sum_{t=0}^{d_{s+1}+\cdots+d_{m}}\left(Cq\right)^{d_{2}+\cdots+d_{m}}\left|Cqu_{1}\right|^{t}\\
 & \ll_{q,m}\left(Cq\right)^{d_{2}+\cdots+d_{m}}\left(1+d_{s+1}+\cdots+d_{m}\right)\max\left\{ 1,\left|Cqu_{1}\right|^{d_{s+1}+\cdots+d_{m}}\right\} .
\end{align*}
Now, recall 
\[
L\left(u_{1},\ldots,u_{m};M\right)=\sum_{d_{2},\ldots,d_{m}}\sum_{f_{2}\in\mathcal{M}_{d_{2}},\ldots,f_{m}\in\mathcal{M}_{d_{m}}}P_{f_{2},\ldots,f_{m};M}(u_{1})u_{2}^{d_{2}}\cdots u_{m}^{d_{m}},
\]
so 
\begin{align*}
 & \left|\left(qu_{1}-1\right)L\left(u_{1},\ldots,u_{m};M\right)\right|\\
 & \le\sum_{d_{2},\ldots,d_{m}}\left|\left(qu_{1}-1\right)\sum_{f_{2}\in\mathcal{M}_{d_{2}},\ldots,f_{m}\in\mathcal{M}_{d_{m}}}P_{f_{2},\ldots,f_{m};M}(u_{1})u_{2}^{d_{2}}\cdots u_{m}^{d_{m}}\right|\\
 & \ll_{q,m}\sum_{d_{2},\ldots,d_{m}}\left(Cq\right)^{d_{2}+\cdots+d_{m}}\left(1+d_{s+1}+\cdots+d_{m}\right)\max\left\{ 1,\left|Cqu_{1}\right|^{d_{s+1}+\cdots+d_{m}}\right\} \left|u_{2}\right|^{d_{2}}\cdots\left|u_{m}\right|^{d_{m}}\\
 & =\sum_{d_{2},\ldots,d_{m}}\left(1+d_{s+1}+\cdots+d_{m}\right)\max\left\{ 1,\left|Cqu_{1}\right|^{d_{s+1}+\cdots+d_{m}}\right\} \left|Cqu_{2}\right|^{d_{2}}\cdots\left|Cqu_{m}\right|^{d_{m}}\\
 & =\sum_{d_{2},\ldots,d_{m}}\left(1+d_{s+1}+\cdots+d_{m}\right)\left|Cqu_{2}\right|^{d_{2}}\cdots\left|Cqu_{s}\right|^{d_{s}}\left|\max\left\{ 1,\left|Cqu_{1}\right|\right\} Cqu_{s+1}\right|^{d_{s+1}}\\
 & \qquad\cdots\left|\max\left\{ 1,\left|Cqu_{1}\right|\right\} Cqu_{m}\right|^{d_{m}}\\
 & =\left(\prod_{i=2}^{s}\frac{1}{1-Cq\left|u_{i}\right|}\right)\sum_{d_{s+1},\ldots,d_{m}}\left(1+d_{s+1}+\cdots+d_{m}\right)\left|\max\left\{ 1,\left|Cqu_{1}\right|\right\} Cqu_{s+1}\right|^{d_{s+1}}\\
 & \qquad\cdots\left|\max\left\{ 1,\left|Cqu_{1}\right|\right\} Cqu_{m}\right|^{d_{m}},
\end{align*}
where in the last step we use the assumption that $\left|u_{2}\right|,\ldots,\left|u_{s}\right|<\frac{1}{Cq}$. 

It suffices to show 
\[
\sum_{d_{s+1},\ldots,d_{m}}\left(1+d_{s+1}+\cdots+d_{m}\right)\left|\max\left\{ 1,\left|Cqu_{1}\right|\right\} Cqu_{s+1}\right|^{d_{s+1}}\cdots\left|\max\left\{ 1,\left|Cqu_{1}\right|\right\} Cqu_{m}\right|^{d_{m}}
\]
 converges. To see this, we can write the expression as 
\begin{align*}
 & \sum_{d_{s+1},\ldots,d_{m}}\prod_{j=s+1}^{m}\left|\max\left\{ 1,\left|Cqu_{1}\right|\right\} Cqu_{j}\right|^{d_{j}}+\sum_{i=s+1}^{m}\sum_{d_{s+1},\ldots,d_{m}}d_{i}\prod_{j=s+1}^{m}\left|\max\left\{ 1,\left|Cqu_{1}\right|\right\} Cqu_{j}\right|^{d_{j}}\\
 & =\prod_{j=s+1}^{m}\frac{1}{1-\left|\max\left\{ 1,\left|Cqu_{1}\right|\right\} Cqu_{j}\right|}\\
 & \qquad+\sum_{i=s+1}^{m}\frac{\left|\max\left\{ 1,\left|Cqu_{1}\right|\right\} Cqu_{i}\right|}{1-\left|\max\left\{ 1,\left|Cqu_{1}\right|\right\} Cqu_{i}\right|}\prod_{j=s+1}^{m}\frac{1}{1-\left|\max\left\{ 1,\left|Cqu_{1}\right|\right\} Cqu_{j}\right|},
\end{align*}
which evidently converges for $\left|\max\left\{ 1,\left|Cqu_{1}\right|\right\} Cqu_{j}\right|<1$
for $j\in\left\{ s+1,\ldots,m\right\} $. 

For $L_{\fudge}(u_{1},\ldots,u_{m};M)$, note that the argument for
$L\left(u_{1},\ldots,u_{m};M\right)$ works without any modification
since $\left|b\left(d_{\ge s+1};M_{1,\ge s+1}\right)\right|$ is bounded
independently of $d_{\ge s+1}$. 
\end{proof}
Finally, we demonstrate how to obtain a different region of convergence
using the bounds on $a$-coefficients. In particular, this region
neither contains nor is contained in the region defined in Proposition
\ref{prop:analyticeq}, so meromorphic continuation allows us to extend
the domain of definition of the multiple Dirichlet series.
\begin{prop}
\label{prop:extraconverge}$L\left(u_{1},\ldots,u_{m};M\right)$ and
$L_{\fudge}(u_{1},\ldots,u_{m};M)$ converge for 
\[
\left|u_{i}\right|<\min\left\{ q^{-1},C^{-1}q^{-1/2}\right\} .
\]
\begin{proof}
Let us prove convergence for $L\left(u_{1},\ldots,u_{m};M\right)$
first. 

To get bounds on radii of convergence, it suffices to obtain radii
of convergence for the Euler product 
\[
\prod_{\pi\text{ prime}}\left(\sum_{e_{1},\ldots,e_{m}}\left|a\left(\pi^{e_{1}},\ldots,\pi^{e_{m}};M\right)\right|u_{1}^{e_{1}\deg\pi}\cdots u_{m}^{e_{m}\deg\pi}\right).
\]

Indeed, observe that showing convergence for 
\[
L\left(u_{1},\ldots,u_{m};M\right)=\sum_{f_{1},\ldots,f_{m}\in\F_{q}[t]^{+}}a\left(f_{1},\ldots,f_{m};M\right)u_{1}^{\deg f_{1}}\cdots u_{m}^{\deg f_{m}}
\]
is implied by convergence for 
\[
\sum_{f_{1},\ldots,f_{m}\in\F_{q}[t]^{+}}\left|a\left(f_{1},\ldots,f_{m};M\right)\right|u_{1}^{\deg f_{1}}\cdots u_{m}^{\deg f_{m}},
\]
and twisted multiplicativity gives
\[
\left|a\left(f_{1},\ldots,f_{m};M\right)\right|=\prod_{\pi\text{ prime}}\left|a\left(\pi^{v_{\pi}\left(f_{1}\right)},\ldots,\pi^{v_{\pi}\left(f_{m}\right)}\right)\right|.
\]
By Corollary \ref{cor:boundsona}, the inner term 
\begin{align*}
 & \sum_{e_{1},\ldots,e_{m}}\left|a\left(\pi^{e_{1}},\ldots,\pi^{e_{m}};M\right)\right|\left|u_{1}\right|^{e_{1}\deg\pi}\cdots\left|u_{m}\right|^{e_{m}\deg\pi}\\
 & =\sum_{\left(e_{1},e_{2},\ldots,e_{m}\right)=0}\left|a\left(\pi^{e_{1}},\ldots,\pi^{e_{m}};M\right)\right|\left|u_{1}\right|^{e_{1}\deg\pi}\cdots\left|u_{m}\right|^{e_{m}\deg\pi}\\
 & \qquad+\sum_{e_{1}+\cdots+e_{m}=1}\left|a\left(\pi^{e_{1}},\ldots,\pi^{e_{m}};M\right)\right|\left|u_{1}\right|^{e_{1}\deg\pi}\cdots\left|u_{m}\right|^{e_{m}\deg\pi}\\
 & \qquad+\sum_{e_{1}+\cdots+e_{m}\ge2}\left|a\left(\pi^{e_{1}},\ldots,\pi^{e_{m}};M\right)\right|\left|u_{1}\right|^{e_{1}\deg\pi}\cdots\left|u_{m}\right|^{e_{m}\deg\pi}\\
 & \ll_{m}1+\left|u_{1}\right|^{\deg\pi}+\cdots+\left|u_{m}\right|^{\deg\pi}+\sum_{\underset{\left(e_{2},\ldots,e_{m}\right)\ne0}{e_{1}+\cdots+e_{m}\ge2}}C^{\ep+\sum_{i}e_{i}}q^{-\deg\pi}q^{\frac{\sum_{i}e_{i}}{2}\deg\pi}\left|u_{1}\right|^{e_{1}\deg\pi}\cdots\left|u_{m}\right|^{e_{m}\deg\pi}.
\end{align*}
To establish absolute convergence of 
\[
\prod_{\pi\text{ prime}}\left(\sum_{e_{1},\ldots,e_{m}}\left|a\left(\pi^{e_{1}},\ldots,\pi^{e_{m}};M\right)\right|u_{1}^{e_{1}\deg\pi}\cdots u_{m}^{e_{m}\deg\pi}\right),
\]
it suffices (and is equivalent) to establishing absolute convergence
of 
\[
\sum_{\pi\text{ prime}}\left(\sum_{e_{1},\ldots,e_{m}}\left|a\left(\pi^{e_{1}},\ldots,\pi^{e_{m}};M\right)\right|u_{1}^{e_{1}\deg\pi}\cdots u_{m}^{e_{m}\deg\pi}-1\right).
\]
By the above, this is at most 
\begin{align*}
 & \ll_{m}\sum_{\pi\text{ prime}}\left(\left|u_{1}\right|^{\deg\pi}+\cdots+\left|u_{m}\right|^{\deg\pi}+q^{-\deg\pi}\sum_{e_{1}+\cdots+e_{m}\ge2}C^{\ep+\sum_{i}e_{i}}q^{\frac{\sum_{i}e_{i}}{2}\deg\pi}\left|u_{1}\right|^{e_{1}\deg\pi}\cdots\left|u_{m}\right|^{e_{m}\deg\pi}\right)\\
 & \le\sum_{d\ge1}\left(q^{d}\left|u_{1}\right|^{d}+\cdots+q^{d}\left|u_{m}\right|^{d}+\sum_{e_{1}+\cdots+e_{m}\ge2}C^{\ep+\sum_{i}e_{i}}q^{\frac{\sum_{i}e_{i}}{2}d}\left|u_{1}\right|^{e_{1}d}\cdots\left|u_{m}\right|^{e_{m}d}\right)\\
 & =\sum_{d\ge1}\left(q|u_{1}|\right)^{d}+\cdots+\sum_{d\ge1}\left(q|u_{m}|\right)^{d}+\sum_{d\ge1}\sum_{e_{1}+\cdots+e_{m}\ge2}C^{\ep+\sum_{i}e_{i}}q^{\frac{\sum_{i}e_{i}}{2}d}\left|u_{1}\right|^{e_{1}d}\cdots\left|u_{m}\right|^{e_{m}d}.
\end{align*}
First, note that 
\[
\sum_{d\ge1}\left(q|u_{i}|\right)^{d}=\frac{q|u_{i}|}{1-q|u_{i}|}
\]
converges because $\left|u_{i}\right|<1/q$. 

Next, 
\begin{align*}
\sum_{d\ge1}\sum_{e_{1}+\cdots+e_{m}\ge2}C^{\ep+\sum_{i}e_{i}}q^{\frac{\sum_{i}e_{i}}{2}d}\left|u_{1}\right|^{e_{1}d}\cdots\left|u_{m}\right|^{e_{m}d} & =\sum_{e_{1}+\cdots+e_{m}\ge2}C^{\ep+\sum_{i}e_{i}}\frac{q^{\frac{\sum_{i}e_{i}}{2}}\left|u_{1}\right|^{e_{1}}\cdots\left|u_{m}\right|^{e_{m}}}{1-q^{\frac{\sum_{i}e_{i}}{2}}\left|u_{1}\right|^{e_{1}}\cdots\left|u_{m}\right|^{e_{m}}}
\end{align*}
converges because $\left|u_{i}\right|<1/q$. This is moreover at most
\[
\sum_{e\ge2}{e+m-1 \choose m-1}C^{\ep+e}\frac{q^{e/2}\max_{1\le i\le m}\left\{ \left|u_{i}\right|\right\} ^{e}}{1-q^{e/2}\max_{1\le i\le m}\left\{ \left|u_{i}\right|\right\} ^{e}},
\]
which converges absolutely because of the ratio test:
\begin{align*}
\frac{{e+m \choose m-1}C^{\ep+e+1}\frac{q^{(e+1)/2}\max_{1\le i\le m}\left\{ \left|u_{i}\right|\right\} ^{e+1}}{1-q^{(e+1)/2}\max_{1\le i\le m}\left\{ \left|u_{i}\right|\right\} ^{e+1}}}{{e+m-1 \choose m-1}C^{\ep+e}\frac{q^{e/2}\max_{1\le i\le m}\left\{ \left|u_{i}\right|\right\} ^{e}}{1-q^{e/2}\max_{1\le i\le m}\left\{ \left|u_{i}\right|\right\} ^{e}}} & =\frac{e+m}{e}Cq^{1/2}\max_{1\le i\le m}\left\{ \left|u_{i}\right|\right\} \frac{1-q^{e/2}\max_{1\le i\le m}\left\{ \left|u_{i}\right|\right\} ^{e}}{1-q^{(e+1)/2}\max_{1\le i\le m}\left\{ \left|u_{i}\right|\right\} ^{e+1}}\\
 & \to Cq^{1/2}\max_{1\le i\le m}\left\{ \left|u_{i}\right|\right\} \\
 & <1,
\end{align*}
because $\left|u_{i}\right|<1/q$ for the second step and $\left|u_{i}\right|<C^{-1}q^{-1/2}$
for the last step.

For $L_{\fudge}(u_{1},\ldots,u_{m};M)$, note that the argument for
$L\left(u_{1},\ldots,u_{m};M\right)$ works without any modification
since $\left|b\left(d_{\ge s+1};M_{1,\ge s+1}\right)\right|$ is bounded
independently of $d_{\ge s+1}$. 
\end{proof}
\end{prop}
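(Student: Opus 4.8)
The plan is to reduce the claim to the convergence of an Euler product and then feed in the prime‑power bound of corollary \ref{cor:boundsona}. First I would note that when $\prod_i f_i$ and $\prod_i g_i$ are coprime all the residue symbols appearing in twisted multiplicativity are $n$‑th roots of unity, hence of absolute value $1$, so $(f_1,\ldots,f_m)\mapsto\bigl|a(f_1,\ldots,f_m;M)\bigr|$ is multiplicative. Therefore absolute convergence of $L(u_1,\ldots,u_m;M)$ is equivalent to absolute convergence of the Euler product $\prod_{\pi}\bigl(\sum_{e_1,\ldots,e_m\ge 0}\bigl|a(\pi^{e_1},\ldots,\pi^{e_m};M)\bigr|\,|u_1|^{e_1\deg\pi}\cdots|u_m|^{e_m\deg\pi}\bigr)$, which in turn holds iff $\sum_\pi\bigl(\text{local factor at }\pi-1\bigr)$ converges absolutely.

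Next I would estimate the local factor at $\pi$ by splitting the sum over $(e_1,\ldots,e_m)$ into four ranges. The terms with $\sum_i e_i=0$ give the $1$ that is subtracted off; the terms with $\sum_i e_i=1$ contribute $|u_1|^{\deg\pi}+\cdots+|u_m|^{\deg\pi}$ via the normalization $|a|=1$; the ``diagonal'' terms $a(\pi^{e_1},1,\ldots,1;M)$ with $e_1\ge 2$ satisfy $|a|\le 1$ (alternatively, corollary \ref{cor:boundsona} already suffices for these), contributing $\sum_{e_1\ge 2}|u_1|^{e_1\deg\pi}$; and for the remaining range, $\sum_i e_i\ge 2$ with $(e_2,\ldots,e_m)\ne 0$, corollary \ref{cor:boundsona} bounds the term by $C^{\sum_i e_i}q^{-\deg\pi}q^{(\sum_i e_i/2)\deg\pi}|u_1|^{e_1\deg\pi}\cdots|u_m|^{e_m\deg\pi}$. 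The decisive feature is the extra factor $q^{-\deg\pi}$ coming from the power saving $|\alpha_j|\le q^{\sum_i d_i/2-1}$: it is exactly what will cancel the number of primes of a given degree.

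Then I would sum over primes $\pi$ by grouping by degree $d=\deg\pi$, using that there are at most $q^d$ monic primes of degree $d$, so $\sum_\pi x^{\deg\pi}\le\sum_{d\ge1}(qx)^d$. The total‑degree‑one contribution becomes $\sum_{d\ge1}(q|u_i|)^d$, convergent for $|u_i|<1/q$; the diagonal contribution becomes $\sum_{e_1\ge2}\frac{q|u_1|^{e_1}}{1-q|u_1|^{e_1}}$, convergent by the ratio test; and the main contribution becomes $\sum_{\substack{e_1+\cdots+e_m\ge2\\(e_2,\ldots,e_m)\ne0}}C^{\sum_i e_i}\frac{q^{\sum_i e_i/2}|u_1|^{e_1}\cdots|u_m|^{e_m}}{1-q^{\sum_i e_i/2}|u_1|^{e_1}\cdots|u_m|^{e_m}}$, the $q^{-\deg\pi}$ having absorbed the $q^d$ from counting primes. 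Bounding this last sum by $\sum_{e\ge2}\binom{e+m-1}{m-1}C^e\frac{(q^{1/2}\max_i|u_i|)^e}{1-(q^{1/2}\max_i|u_i|)^e}$ and applying the ratio test (the ratio tends to $Cq^{1/2}\max_i|u_i|$) yields convergence precisely when $Cq^{1/2}\max_i|u_i|<1$, i.e. $|u_i|<C^{-1}q^{-1/2}$; combined with $|u_i|<1/q$ this is the asserted region. For $L_{\fudge}(u_1,\ldots,u_m;M)$ the identical argument works, since $\bigl|b(d_{\ge s+1};M_{1,\ge s+1})\bigr|\le 1$ uniformly in the $d_i$: each $|G(\chi^{M_{1,i}},\psi)|\le q^{1/2}$, which cancels the factor $q^{-\sum_{i\ge s+1}d_i/2}$ in the definition of $b$.

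I expect the only real difficulty to be organizational rather than conceptual: carefully tracking which ranges of exponents enjoy the $q^{-\deg\pi}$ saving (everything except the total‑degree‑$\le1$ terms and the pure‑$u_1$‑power diagonal), so that after summing the $\le q^d$ primes of degree $d$ one is genuinely left with a convergent multivariate geometric‑type series, and then extracting the sharp radius $C^{-1}q^{-1/2}$ from the ratio test. The substantive analytic input has already been packaged into corollary \ref{cor:boundsona}.
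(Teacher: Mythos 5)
Your proposal is correct and follows essentially the same path as the paper: reduce absolute convergence to the Euler product via twisted multiplicativity, split the local factor at $\pi$ into the four exponent ranges (total degree $0$, total degree $1$, the $u_1$-diagonal with $e_1\ge 2$, and the remainder), bound each via corollary \ref{cor:boundsona}, sum over primes by degree using $\le q^d$ primes of degree $d$, and finish with geometric series and the ratio test to extract the radius $\min\{q^{-1},C^{-1}q^{-1/2}\}$. Your remark that corollary \ref{cor:boundsona} can also be used directly on the $u_1$-diagonal terms (rather than the weaker claim $|a(\pi^{e_1},1,\ldots,1;M)|\le 1$) is a harmless alternative that yields the same region of convergence, and your explicit computation that $|b(d_{\ge s+1};M_{1,\ge s+1})|\le 1$ correctly justifies the paper's terser statement for $L_{\fudge}$.
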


\printbibliography[heading=bibintoc]

\end{document}